\newtheorem{thm}{Theorem}[section]
\newtheorem{lem}[thm]{Lemma}
\newtheorem{prop}[thm]{Proposition}
\newtheorem{cor}[thm]{Corollary}
\theoremstyle{definition}
\theoremstyle{definition}
\newtheorem{df}[thm]{Definition}
\theoremstyle{definition}
\newtheorem{rem}[thm]{Remark}
\theoremstyle{definition}
\newtheorem{exm}[thm]{Example}
\renewcommand{\phi}{\varphi}
\newcommand{\N}{\mathbb{N}}
\newcommand{\Z}{\mathbb{Z}}
\newcommand{\R}{\mathbb{R}}
\newcommand{\C}{\mathbb{C}}
\newcommand{\T}{\mathbb{T}}
\newcommand{\I}{\mathbb{I}}
\numberwithin{equation}{section}
\newcommand{\cpc}{c.p.c.~map}
\newcommand{\hm}{homomorphism}
\newcommand{\dt}{\delta}
\newcommand{\ep}{\varepsilon}
\newcommand{\la}{\langle}
\newcommand{\ra}{\rangle}
\newcommand{\andeqn}{\,\,\,{\rm and}\,\,\,}
\newcommand{\rforal}{\,\,\,{\rm for\,\,\,all}\,\,\,}
\newcommand{\CA}{$C^*$-algebra}
\newcommand{\SCA}{$C^*$-subalgebra}
\newcommand{\af}{{\alpha}}
\newcommand{\bt}{{\beta}}
\newcommand{\diag}{{\rm diag}}
\newcommand{\wilog}{without loss of generality}
\newcommand{\Wlog}{Without loss of generality}
\newcommand{\D}{\mathbb D}
\newcommand{\beq}{\begin{eqnarray}}
\newcommand{\eneq}{\end{eqnarray}}
\newcommand{\tforal}{\,\,\,\text{for\,\,\,all}\,\,\,}
\newcommand{\tand}{\,\,\,\text{and}\,\,\,}
\title{Almost representations}
\author{Huaxin Lin}
\date{
}
\begin{document}

\maketitle

\begin{abstract}
Let $H$ be an infinite dimensional separable Hilbert space, 
$B(H)$ the \CA\, of all bounded linear operators on $H,$
$U(B(H))$ the unitary group of $B(H)$ 
and ${\cal K}\subset B(H)$ the ideal of compact operators.  
Let $G$ be a countable discrete amenable group.
We prove the following:
For any $\ep>0,$ any finite subset ${\cal F}\subset G,$ and $0<\sigma\le 1,$
there exists  $\dt>0,$ finite subsets ${\cal G}\subset G$ and ${\cal S}\subset 
\C[G]$ satisfying the following property:
For any map $\phi: G\to U(B(H))$ such that
\beq\nonumber
\|\phi(fg)-\phi(f)\phi(g)\|<\dt\rforal f,g\in {\cal G}\tand\|\pi\circ \tilde \phi(x)\|\ge \sigma \|x\|
\tforal x\in {\cal S},
\eneq
there is a group \hm\, $h: G\to U(B(H))$ such that
\beq\nonumber
\|\phi(f)-h(f)\|<\ep\tforal f\in {\cal F},
\eneq
where $\tilde \phi$ is the linear extension of $\phi$ on the group 
ring $\C[G]$ and $\pi: B(H)\to B(H)/{\cal K}$ is the quotient map.
A counterexample is given that the fullness condition above cannot be removed.

We actually prove a more general result for separable amenable \CA s. 

\end{abstract}

\section{Introduction}

Let  $H$ be an infinite dimensional  separable Hilbert space and 
$B(H)$ the \CA\, of all bounded linear operators.  Consider a separable \CA\, $A$ and a  ($C^*$-) \hm\,
$h: A\to B(H),$  a representation of $A.$ 
Suppose that $L: A\to B(H)$ is a contractive completely positive linear map and almost multiplicative.
We are interested in the problem whether such a map $L$ is close to a genuine representation.
More precisely, we have  the following question:

{\bf Q1}:  Let $A$ be a separable \CA\, and $H$ be an infinite dimensional  separable 
Hilbert space.  Let ${\cal F}\subset A$ be a finite subset and $\ep>0.$
Are there a finite subset ${\cal G}\subset A$ and  a positive number $\dt>0$ satisfying the following:
for any contractive completely positive linear map $L: A\to B(H)$ with property that
\beq
\|L(ab)-L(a)L(b)\|<\dt\tforal a, b\in {\cal G},
\eneq
there is a \hm\,  $h: A\to B(H)$ such that
\beq
\|L(a)-h(a)\|<\ep\tforal a\in {\cal F}?
\eneq

In the special case that $A=C(\I^2),$ the \CA\, of continuous functions on the unit square,
and 
$H$ is any finite dimensional Hilbert space,
this is also known as von Neumann-Kadison-Halmos problem.
For this special case, it has an affirmative answer (see \cite{Halmos1}). 
However, prior to that, for the case that $A=C(\T^2)$ and 
$H$ is any finite dimensional Hilbert space, Dan Voiculescu 
gave a negative answer to the question (see \cite{DV2}). 
Almost multiplicative maps often appear in the study of \hm s
of \CA s, in particular, in the Elliott program of classification of amenable 
\CA s (see, for example, \cite[Theorem 5.1]{Linduke}, \cite[Theorem 8.7]{Linclr1}, 
\cite[Theorem 5.8]{LinAH1},
\cite[Theorem 12.7 ]{GLNI}, \cite[Theorem 5.4.6]{Linbook2}).  
There is also a significant development 
in the study of weak semiprojectivity (see, for example, \cite{Linsemiproj}, \cite{Sp}, \cite{ST}, \cite{EK}, etc.).
If $A$ is a weakly semiprojective, then the answer to {\bf Q1} is affirmative. 
For example, by \cite[Theorem 7.5]{Linsemiproj}, the answer to {\bf Q1}
is affirmative when $A$ is a separable purely infinite simple amenable 
\CA\, in the UCT class whose $K_i$-group ($i=0,1$) is a countable direct sum 
of finitely generated abelian groups. 

Recent interest in {\bf Q1} stems from the study of macroscopic observables and measurements. 
David Mumford  recently
asked whether an almost multiplicative map from a commutative \CA\,
to $B(H)$ can be approximated by a \hm\, (see \cite[Chapter 14]{Mf}).  We have  some affirmative 
solutions to {\bf Q1} in the case that $A$ is a commutative \CA\, with finitely many generators (see \cite{Linmf1} and \cite{Lincmp}).

The current study is also motivated by a 
question from Professor S. T. Yau during a brief SIMIS presentation.
Yau asked whether results in \cite{Linmf1} can be extended to 
nilpotent groups. 
 Consider a discrete countable amenable group 
$G$ and a ``quasi-representation", i.e, a map 
$\phi: G\to U(B(H)),$ the unitary group of $B(H),$ such 
that $\phi(fg)-\phi(f)\phi(g)$  has small norm on a finite subset of $G.$ 
The question is when there is a true \hm\, $h: G\to U(B(H))$ which is close to $\phi.$ 
More precisely,  one has the following question:

{\bf Q2}:  Let $G$ be a countable discrete amenable group 
and $H$ be an infinite dimensional separable Hilbert space.
Let ${\cal F}\subset G$ be a finite subset and 
$\ep>0.$ Are there a finite subset ${\cal G}\subset G$ and 
$\dt>0$ such that, for any map $\phi: G\to U(B(H))$ (unitary group of $B(H)$)
with
\beq\label{Q2-1}
\|\phi(fg)-\phi(f)\phi(g)\|<\dt\tforal f, g\in {\cal G},
\eneq
there is a group \hm\, $\psi: G\to U(B(H))$ 
such that
\beq\label{Q2-2}
\|\phi(f)-\psi(f)\|<\ep\rforal f\in {\cal F}?
\eneq

D. Kazhdan in \cite{Kz} proved the following theorem:
Let $G$ be an amenable group, $0<\ep<1/100$ and 
$\rho: G\to U(B(H))$ be a continuous map of $G$ 
such that 
$\|\rho(xy)-\rho(x)\rho(y)\|\le \ep$  for all $x, y\in G,$ then there is a \hm\, $h: G\to U(B(H))$
such that $\|\rho(g)-h(g)\|\le 2\ep$ for all $g\in G.$ 
Kazhdan's condition of ``almost multiplicative" is for all elements in the group uniformly.
In contrast, \eqref{Q2-1} imposes a weak local condition, and {\bf Q2} seeks a weaker approximation--- a natural fit for operator algebras. 
%
%
%
Unfortunately, the example in \ref{ExVoi} shows that the answer to {\bf Q2}, in general, has a negative 
answer, i.e., there are ``quasi-representations" which are far away from 
any representations.    A negative answer to {\bf Q2} also gives a negative answer 
to {\bf Q1}.  Nevertheless, we also provide a positive result for {\bf Q2} and Yau's question (see 
Theorem  \ref{Main2}) under an additional fullness condition.  Recently,  
Ruffs Willett had studied the same question as {\bf Q2} in the setting that the Hilbert space $H$ is of finite dimensional (see \cite{Wr}). 
So the results in this paper 
might be viewed as complements of Willett's results in the infinite dimension Hilbert spaces.

Suppose that $L: A\to B$ is a completely positive linear map, where 
$B$ is a unital \CA, and  $I\subset B$ is an ideal such that 
$L(A)\subset I+\C \cdot 1_B.$  Then, to understand $L,$ we may consider 
$L$ as a map from $A$ into $\tilde I$ (see Example \ref{ExVoi}). 
On the other hand, if $C$ is a \CA\, with an ideal $J$ such that 
$C/J\cong A$ and $\psi: C\to A$ is the quotient map. Then 
$L_1=L\circ \psi: C\to B$ is also a completely positive linear map.
However,  some information might be hidden (see Example \ref{Exshift}), if one insists to consider 
$L_1: C\to B$ instead of $L: A\to B.$  
These suggest that we should have a ``fullness" condition, 
for example, the second condition in \eqref{Main-1} (both maps in Section 8 are 
not full).

Denote by ${\cal N}$ the class of those separable amenable \CA s which satisfy the UCT.
Note that this class ${\cal N}$ contains all $AF$-algebras, all commutative \CA s, and their 
tensor products. 
It is closed under taking ideals  and quotients as well as 
inductive limits (see \cite{RS}). 

The first result of this paper can be  stated as follows:

\begin{thm}\label{MT-1}
Let $A$ be a separable amenable \CA\, in ${\cal N}.$
For any $\ep>0,$ any finite subset ${\cal F}\subset A,$ and $0<\lambda\le 1,$ 
there exists  $\dt>0$ and a finite subset ${\cal G}\subset A$ satisfying the following:

For any contractive  positive linear map  $L: A\to B(H)$ for some infinite dimensional separable Hilbert space $H$
which is ${\cal G}$-$\dt$-multiplicative, i.e.,
$
\|L(a)L(b)-L(ab)\|<\dt
$
for all $a, b\in {\cal G},$  such that
\beq\label{MT-1-00}
\|L(a)\|\ge \lambda \|a\| \tforal a \in {\cal G}
\eneq
and there is a separable \SCA\, $C\subset B(H)$
such that $L({\cal G})\subset C$ and $C\cap {\cal K}=\{0\},$  then
there is a \hm\,  $h: A\to B(H)$
such that
\beq
\|L(a)-h(a)\|<\ep\tforal a\in {\cal F}.
\eneq
\end{thm}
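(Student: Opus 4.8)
The plan is to argue by contradiction and pass to a sequence algebra, reducing the statement to a lifting problem which is then solved by extension theory, the UCT being used only to kill a $KK$-obstruction.

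\medskip

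\noindent\textbf{Reduction to a lifting problem.} Suppose the conclusion fails. Then there are a finite set ${\cal F}_0\subset A$, an $\ep_0>0$ and a $0<\lambda\le 1$, together with contractive positive linear maps $L_n\colon A\to B(H)$ that are ${\cal G}_n$-$\frac1n$-multiplicative for an increasing sequence of finite sets ${\cal G}_n$ with dense union in $A$, satisfy $\|L_n(a)\|\ge\lambda\|a\|$ for $a\in{\cal G}_n$, have $L_n({\cal G}_n)$ inside a separable \SCA\ $C_n\subset B(H)$ with $C_n\cap{\cal K}=\{0\}$, and admit no \hm\ $h\colon A\to B(H)$ with $\|L_n(a)-h(a)\|<\ep_0$ for all $a\in{\cal F}_0$. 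Form the sequence algebra and set $\Phi\colon A\to\ell^\infty(B(H))/c_0(B(H))$, $\Phi(a)=(L_n(a))_n+c_0(B(H))$. Since almost multiplicativity on $\bigcup{\cal G}_n$ becomes multiplicativity in the quotient and each $L_n$ is positive (hence $*$-preserving), $\Phi$ is a genuine \hm, and $\|\Phi(a)\|\ge\lambda\|a\|$ makes it injective. If $\Phi$ lifts to a \hm\ $(h_n)_n\colon A\to\ell^\infty(B(H))$, then $\|h_n(a)-L_n(a)\|\to 0$ for every $a\in A$, so for large $n$ the \hm\ $h_n$ contradicts the choice of $L_n$. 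Thus it suffices to lift $\Phi$.

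\medskip

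\noindent\textbf{Lifting $\Phi$.} Because $B(H)$ is unital, $\ell^\infty(B(H))$ is the multiplier algebra of the $\sigma$-unital ideal $I:=c_0(B(H))$, so $\Phi$ is the Busby invariant of an extension $0\to I\to E\to A\to 0$, and lifting $\Phi$ is exactly splitting this extension. Two facts are combined. (i) As $A$ is amenable, the extension is semisplit by Choi--Effros; and, crucially, it is \emph{absorbing}: here the two fullness hypotheses enter, namely $\|L_n(a)\|\ge\lambda\|a\|$ together with $C_n\cap{\cal K}=\{0\}$ force $\Phi(a)$ to be a full (in particular non-compact-supported) element of the corona algebra for $a\neq 0$, which, after stabilising $I$, makes the extension purely large in the sense of Elliott--Kucerovsky, hence absorbing. (ii) As $A$ lies in ${\cal N}$ it satisfies the UCT, so $KK^1(A,I)$ is computed from $K_*(A)$ and $K_*(I)$ through the universal coefficient sequence; and since $K_0(B(H))=K_1(B(H))=0$, writing $c_0(B(H))$ as an inductive limit of finite direct sums gives $K_*(I)=0$, whence $KK^1(A,I)=0$. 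An absorbing extension whose class vanishes is stably trivial and therefore trivial, i.e. split. This produces the lift $(h_n)$ and the contradiction; a routine reindexing converts the argument into the asserted uniform choice of $\dt$ and ${\cal G}$ from $\ep$, ${\cal F}$ and $\lambda$.

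\medskip

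\noindent\textbf{Main obstacle.} The genuinely substantial step is (i): verifying that the hypotheses $\|L_n(a)\|\ge\lambda\|a\|$ and $C_n\cap{\cal K}=\{0\}$ really do promote $\Phi$ to a \emph{full}, and then to an \emph{absorbing}, extension, so that vanishing of $KK^1(A,I)$ upgrades from stable triviality of the extension to honest triviality (equivalently, to a sequence of genuine homomorphisms $h_n$ approximating the $L_n$). By contrast, the UCT is used only to make $KK^1(A,I)$ vanish --- it removes the sole $K$-theoretic obstruction to $L$ being close to a representation --- and the special structure of $B(H)$ is used only through $K_*(B(H))=0$ (for that vanishing) and through the non-compactness encoded in $C\cap{\cal K}=\{0\}$ (for fullness and absorption).
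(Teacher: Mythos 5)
Your reduction to a lifting problem --- splitting the extension $0\to c_0(B(H))\to E\to A\to 0$ with Busby invariant $\Phi$ --- is correct, and your reading that the UCT together with $K_*(c_0(B(H)))=0$ kills the $KK^1$-obstruction is exactly right. The gap is the absorption step, which you flag as the crux but do not carry out, and as stated it fails. The Elliott--Kucerovsky ``purely large implies absorbing'' criterion requires the ideal to be \emph{stable}; $c_0(B(H))=\bigoplus_n B(H)$ is not stable (it has unital direct summands), and ``after stabilising $I$'' does not repair this, because the lift you need must land in $\ell^\infty(B(H))=M(c_0(B(H)))$, i.e.\ you must split the \emph{unstabilised} extension, and triviality of the stabilised one does not descend. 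There are non-stable absorption theorems --- the one the paper actually uses, via \cite{Linfull} and properties P1, P2, P3 of the corona --- but then one must verify P1--P3 for $\ell^\infty(B(H))/c_0(B(H))$, which is neither done nor obvious: unlike the simple purely infinite coronas treated in Proposition~\ref{Pcalkin}, this corona has a complicated ideal lattice (containing $\ell^\infty(\mathcal{K})/c_0(\mathcal{K})$ among many others). A second, smaller gap is fullness of $\Phi$: in light of that ideal lattice, $\|\Phi(a)\|\ge\lambda\|a\|$ alone is far from enough. Your remark that $C_n\cap\mathcal{K}=\{0\}$ forces $\|\pi(L_n(a))\|=\|L_n(a)\|$ is the right starting point for a fullness lemma (then the spectral projection of $L_n(a)$ on $[\lambda\|a\|/2,\infty)$ is infinite-rank, and one conjugates $L_n(a)$ above $1$ by a norm-one partial isometry, uniformly in $n$), but this needs to be written out and extended from $\bigcup_n\mathcal{G}_n$ to all of $A_+\setminus\{0\}$.

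The paper sidesteps both issues by never forming the sequence corona. It invokes a quantitative stable uniqueness theorem (Theorem~\ref{TAUCT}, applied through Lemmas~\ref{Luniq} and~\ref{Luniqh}; this is where the UCT and $K_*(B(H))=0$ enter), which gives only $\diag(L,\sigma,\dots,\sigma)\approx_\ep u^*\diag(h,\sigma,\dots,\sigma)u$ in $M_{k+1}(B(H))$ for some finite $k$ and a full auxiliary almost-multiplicative $\sigma$. Voiculescu's Weyl--von Neumann theorem --- and this is precisely where $C\cap\mathcal{K}=\{0\}$ is used --- then reabsorbs the $k$ auxiliary copies, upgrading the stable statement to the unstable one at the level of $B(H)$. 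Because the argument runs at each finite index and never touches $c_0(B(H))$ or its multiplier algebra, the stability and P1/P3-verification questions never arise. Your route may well be completable with a suitable non-stable absorption input, but the central step as you wrote it is a genuine gap.
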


For purely infinite simple amenable \CA s, we have  the following result:

\begin{thm}\label{Tpurely}
Let $A$ be a separable amenable  purely infinite simple \CA\, in ${\cal N}.$
For any $\ep>0,$ any finite subset ${\cal F}\subset A,$ and $0<\sigma\le 1,$ 
there exists  $\dt>0$ and a finite subset ${\cal G}\subset A$ satisfying the following:

For any positive linear map $L: A\to B(H)$ for some infinite dimensional separable Hilbert space $H$
such that $1\ge \|L\|\ge \sigma$ and $
\|L(a)L(b)-L(ab)\|<\dt
$
for all $a, b\in {\cal G},$    then
there is a \hm\,  $h: A\to B(H)$
such that
\beq
\|L(a)-h(a)\|<\ep\tforal a\in {\cal F}.
\eneq
\end{thm}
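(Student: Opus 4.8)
\smallskip

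\noindent The plan is to deduce Theorem \ref{Tpurely} from Theorem \ref{MT-1}. A purely infinite simple \CA\ in ${\cal N}$ is in particular a separable amenable \CA\ in ${\cal N}$, so all that has to be produced --- starting from the single hypothesis $1\ge\|L\|\ge\sigma$, together with pure infiniteness and simplicity of $A$ and ${\cal G}$-$\dt$-multiplicativity for suitably chosen ${\cal G}$ and $\dt$ --- are the two ``fullness'' inputs of Theorem \ref{MT-1}: a norm lower bound $\|L(a)\|\ge\lambda\|a\|$ on a prescribed finite set, and a separable \SCA\ $C\supseteq L({\cal G})$ with $C\cap{\cal K}=\{0\}$. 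I will carry this out for $A$ unital; the non-unital case reduces to it by working within a corner $pAp$, where $p$ is a projection large enough in an approximate unit of $A$ (such projections exist since $A$ has real rank zero), a reduction that needs a little care with the order of quantifiers but is otherwise routine.

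\smallskip

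\noindent\emph{Manufacturing the norm lower bound.} Assume $A$ is unital. Since $L$ is positive and contractive, $\|L(1_A)\|=\|L\|\ge\sigma$; and once $1_A\in{\cal G}$ and $\dt$ is small, $\|L(1_A)-L(1_A)^2\|<\dt$ forces $\mathrm{sp}(L(1_A))$ to have a gap around $1/2$ and hence $L(1_A)$ to lie within $O(\sqrt{\dt})$ of a genuine projection $P\in C^*(L({\cal G}))$, with $\|L(a)-PL(a)P\|$ small for $a\in{\cal G}$. I will use the standard fact that in a unital purely infinite simple \CA\ every positive element $b$ of norm one is full with uniformly bounded coefficients: there are $x_b,y_b\in A$ with $x_b b y_b=1_A$ and $\|x_b\|\,\|y_b\|\le M$ for an absolute constant $M$ (pass to $(b-1/2)_+$, which is bounded away from $0$, use proper infiniteness of $1_A$ and $1_A\precsim\langle(b-1/2)_+\rangle$ to obtain an isometry into its hereditary subalgebra, and compute). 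The order of quantifiers is the point: one first applies Theorem \ref{MT-1} to $({\cal F},\ep,\lambda)$ with $\lambda=\lambda(\sigma,M)$ an explicit function, obtaining a finite ${\cal G}_0$ and $\dt_0$; then one lets ${\cal G}$ be $({\cal G}_0\cup\{1_A\})$, taken self-adjoint and enlarged by the elements $a^*a$ ($a\in{\cal G}_0$) and, for every positive norm-one $b$ in the resulting set, by the coefficients $x_b,y_b$ and the products $by_b$, $x_b(by_b)$ needed to telescope; finally one takes $\dt\le\dt_0$ small. Then $\sigma\le\|L(1_A)\|\le\|L(x_b)\|\,\|L(b)\|\,\|L(y_b)\|+O(\dt)\le M^2\|L(b)\|+O(\dt)$, so $\|L(b)\|\ge(\sigma-O(\dt))/M^2$, and $\|L(a)\|^2\ge\|L(a^*a)\|-O(\dt)$ then yields $\|L(a)\|\ge\lambda\|a\|$ for all $a\in{\cal G}_0$.

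\smallskip

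\noindent\emph{Manufacturing $C$ --- the main obstacle.} What remains, and is the hard part, is a separable \SCA\ $C\supseteq L({\cal G})$ with $C\cap{\cal K}=\{0\}$; equivalently, $C^*(L({\cal G}))\cap{\cal K}=\{0\}$, i.e.\ injectivity of $\pi$ on $C^*(L({\cal G}))$. Here I would use Kirchberg's absorption theorem $A\cong A\otimes{\cal O}_\infty$, which provides a unital copy of ${\cal O}_\infty$ inside $A$ whose generators $1_A\otimes s_i$ commute with all of $A\otimes1$. Enlarge ${\cal G}$ so that it contains (the images under this isomorphism of) a large finite part of $\{s_1,s_2,\dots\}$ as well as the finitely many elements $a\otimes1$ of the previous step. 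Since ${\cal O}_\infty$ is semiprojective, once ${\cal G}$ is large enough and $\dt$ small enough the restriction of $L$ to this copy of ${\cal O}_\infty$ is within any prescribed $\et$ (on the chosen finite set) of a genuine representation $\rho_0\colon{\cal O}_\infty\to B(H)$; being a nonzero \hm\ out of a simple \CA\ which is not a \CA\ of compact operators, $\rho_0$ is isometric with $\rho_0({\cal O}_\infty)\cap{\cal K}=\{0\}$, so in particular $P$ is an infinite projection (whence $\|\pi(L(1_A))\|$ is close to $1$, and repeating the coefficient argument inside the Calkin algebra gives $\|\pi(L(a))\|\ge\lambda'\|a\|$ on ${\cal G}_0$). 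To upgrade this to $C^*(L({\cal G}))\cap{\cal K}=\{0\}$ one uses that, $L$ being almost multiplicative, each $L(a\otimes1)$ approximately commutes with $\rho_0({\cal O}_\infty)\approx L(1_A\otimes{\cal O}_\infty)$, so that $C^*(L({\cal G}))$ is, up to arbitrarily small perturbation, a quotient of a \CA\ of the form $D\otimes{\cal O}_\infty$; since every quotient of $D\otimes{\cal O}_\infty$, and every nonzero ideal of such a quotient, is again of the form $E\otimes{\cal O}_\infty$ with $E\neq0$ (by simplicity and nuclearity of ${\cal O}_\infty$), and such an algebra is never AF whereas a \CA\ of compact operators is, the ideal $C^*(L({\cal G}))\cap{\cal K}$ must be $\{0\}$. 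The genuinely delicate point is making ``up to arbitrarily small perturbation'' rigorous --- controlling, as one passes to a sequential limit, the a priori unbounded polynomials that would exhibit a hypothetical compact projection in $C^*(L({\cal G}))$ --- or, alternatively, verifying that the proof of Theorem \ref{MT-1} uses no more than the fullness-modulo-${\cal K}$ estimate $\|\pi(L(a))\|\ge\lambda'\|a\|$ on ${\cal G}_0$ already obtained. Once $C$ is available, Theorem \ref{MT-1} applied to $L$, ${\cal F}$, $\ep$ returns the desired \hm\ $h$ with $\|L(a)-h(a)\|<\ep$ for all $a\in{\cal F}$.
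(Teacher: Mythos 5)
Your plan is to deduce Theorem \ref{Tpurely} by verifying the hypotheses of Theorem \ref{MT-1}, and you have correctly identified where this breaks down: there is no way to produce, from the hypotheses of Theorem \ref{Tpurely}, a separable \SCA\ $C\supseteq L({\cal G})$ with $C\cap{\cal K}=\{0\}$ --- equivalently, to show $C^*(L({\cal G}))\cap{\cal K}=\{0\}$. That condition is an exact algebraic constraint on the operators $L(a)$ themselves, not on their images in the Calkin algebra, and it genuinely can fail for $L$ satisfying the hypotheses of Theorem \ref{Tpurely}: take a faithful representation $h\colon A\to B(H)$ with $h(A)\cap{\cal K}=\{0\}$, perturb it by a small self-adjoint finite-rank operator so that some spectral projection of $L(a)$ for some $a\in{\cal G}$ is finite rank, and you obtain a \cpc, $L$ which is ${\cal G}$-$\dt$-multiplicative with $\|L\|\ge\sigma$ but with $C^*(L({\cal G}))\cap{\cal K}\ne\{0\}$. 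No amount of enlarging ${\cal G}$ or shrinking $\dt$ removes such examples, so Theorem \ref{MT-1} simply does not apply to the whole class of $L$ covered by Theorem \ref{Tpurely}. Your ``up to arbitrarily small perturbation'' argument via $A\otimes{\cal O}_\infty$ can only control finitely many products of finitely many $L(a)$'s; whether a compact projection appears in $C^*(L({\cal G}))$ is a question about the closed algebra, and you have no uniform bound on the algebraic expressions that might produce one. Your fallback suggestion, that the proof of Theorem \ref{MT-1} might ``only use'' the Calkin-algebra estimate, is also not available without reproving the theorem: the proof of Theorem \ref{MT-1+} applies Voiculescu's theorem to the inclusion $j\colon C\hookrightarrow B(H)$, and the hypothesis $C\cap{\cal K}=\{0\}$ is exactly what makes $j$ a representation with no compact part so that Voiculescu's absorption applies.

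The norm lower bound portion of your argument is fine and morally the same as what Lemma \ref{Lsimple} does, but it is not the bottleneck. The paper's actual proof avoids Theorem \ref{MT-1} entirely and instead uses an absorption result specific to the purely infinite simple setting (\cite[Lemma 7.2]{Linsemiproj}): after normalizing $L(1_A)$ to a projection $e$ as in Definition \ref{Dunital}, one fixes a unital embedding $j\colon{\cal O}_2\to eB(H)e$ and an embedding $h_0\colon A\to{\cal O}_2$, works inside the separable \CA\ $C$ generated by $L(A)$, $j({\cal O}_2)$ and a unital copy of ${\cal O}_2$ in $B(H)$ (no condition on $C\cap{\cal K}$ is required), and obtains an isometry $v\in M_2(C)$ absorbing $j\circ h_0$: $\|v^*(L(a)\oplus j\circ h_0(a))v-L(a)\|$ small. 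One then applies Lemma \ref{Luniqh} to compare $L\oplus j\circ h_0$ with $h_1\oplus j\circ h_0$ for a genuine \hm\, $h_1$, and composes. This absorption into $M_2(C)$ is the substitute for the Voiculescu-based absorption in Theorem \ref{MT-1+}, and it exists precisely because $A$ is purely infinite simple. If you want to push your approach through, you would need to replace Theorem \ref{MT-1} with a version of the absorption lemma that does not require $C\cap{\cal K}=\{0\}$, which is in effect what the paper does.
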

One should note that, in general, purely infinite simple \CA s are not weakly semiprojective.
Moreover, $1\ge \|L \| \ge \sigma$  is not much different from  
$L\not=0$ and much weaker than 
 \eqref{MT-1-00}. In fact, when $\|L\|<\ep,$ we may simply choose $h=0.$

The main theorem of the paper is stated as follows. 


\begin{thm}\label{Main}
Let $A$ be a separable  quasidiagonal \CA\, in ${\cal N},$ $H$ be an infinite dimensional separable Hilbert space 
and $B(H)$ the \CA\, of all bounded linear operators on $H.$ 

For any $\ep>0,$ any finite subset ${\cal F},$ 
and $0<\sigma\le 1,$  there are $\dt>0$ and finite subsets ${\cal G}, {\cal H}\subset A$ 
satisfying the following:
For any contractive positive linear  map $L: A\to B(H)$ such that
\beq\label{Main-1}
\|L(ab)-L(a)L(b)\|<\dt\tforal a, b\in {\cal G}\tand 
\|\pi\circ L(c)\|\ge \sigma\|c\|\tforal c\in {\cal H},
\eneq
where $\pi: B(H)\to B(H)/{\cal K}$ is the quotient map,
there is a faithful  and full representation $h: A\to B(H)$ such that
\beq
\|L(a)-h(a)\|<\ep\tforal a\in {\cal F}.
\eneq
\end{thm}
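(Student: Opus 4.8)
The plan is to deduce Theorem \ref{Main} from Theorem \ref{MT-1} by upgrading the quotient-norm fullness hypothesis $\|\pi\circ L(c)\|\ge \sigma\|c\|$ into a genuine sub-$C^*$-algebra hypothesis of the type required there, and then to promote the resulting \hm\ to a \emph{faithful and full} representation by a standard absorption argument. So the first step is a contradiction/compactness reduction: if the statement failed, one would have a sequence $\ep_n\to 0$ (or a fixed $\ep$), exhausting finite subsets ${\cal G}_n,{\cal H}_n$, and $\dt_n\to 0$, together with \cpc\ maps $L_n: A\to B(H)$ that are ${\cal G}_n$-$\dt_n$-multiplicative and satisfy $\|\pi\circ L_n(c)\|\ge \sigma\|c\|$ for $c\in {\cal H}_n$, yet no \hm\ is $\ep$-close to $L_n$ on ${\cal F}$. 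Passing to the sequence algebra, the maps $(L_n)$ assemble into an honest \hm\ $\Phi: A\to \ell^\infty(B(H))/c_0(B(H))$, and the fullness condition survives in the quotient by ${\cal K}$ in each coordinate; the point of using quasidiagonality of $A$ together with membership in ${\cal N}$ is that the quasidiagonal representations are plentiful and the whole package is amenable, so the Effros--Haagerup / Arveson-type lifting and the Choi--Effros lifting theorem let one reduce to a single-coordinate statement about a \cpc\ map into $B(H)$ whose image, after a perturbation, sits inside a separable $C^*$-subalgebra $C$ with $C\cap{\cal K}=\{0\}$.

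Concretely, the second step is to manufacture that subalgebra $C$: since $A$ is quasidiagonal and separable, fix a faithful quasidiagonal essential representation $A\hookrightarrow B(H)$, i.e.\ one whose image meets ${\cal K}$ trivially, and use the almost-multiplicativity of $L$ on a large enough ${\cal G}$ to compress $L$ to a nearby \cpc\ map $L'$ whose image lies in (a unitary conjugate of) that image plus a controlled error. The role of $\pi\circ L$ being bounded below on ${\cal H}$ is exactly to guarantee that, after this replacement, the relevant \SCA\ $C$ still satisfies $C\cap{\cal K}=\{0\}$ and $\|L'(a)\|\ge \lambda\|a\|$ with $\lambda$ comparable to $\sigma$ — this is where one invokes that a quasidiagonal representation can be chosen with no compact part, so the quotient norm and the $B(H)$-norm agree on $C$. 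With those hypotheses in hand, Theorem \ref{MT-1} applied to $L'$ (with suitably shrunk $\dt$ and enlarged ${\cal G}$, depending on $\ep$ and $\lambda$) produces a \hm\ $h_0: A\to B(H)$ with $\|L'(a)-h_0(a)\|$ small on ${\cal F}$, hence $\|L(a)-h_0(a)\|<\ep/2$ on ${\cal F}$.

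The third step is the faithful-and-full upgrade. The \hm\ $h_0$ need not be faithful or full, but one repairs this by adding a faithful full essential representation $\rho: A\to B(H)$ with $\rho(A)\cap{\cal K}=\{0\}$, forming $h_0\oplus \rho$ acting on $H\oplus H$, and then using Voiculescu's theorem: $\rho$ is approximately unitarily equivalent to $h_0\oplus\rho$ (as essential representations of the separable nuclear, hence in particular exact, \CA\ $A$), and more to the point $h_0$ is dominated, so one can find a single representation $h$, unitarily equivalent to a representation of the form $h_0\oplus(\text{large multiple of }\rho)$ realized back on $H$, which is faithful and full and still satisfies $\|h(a)-h_0(a)\|<\ep/2$ on ${\cal F}$ after conjugating by an appropriate unitary supplied by Voiculescu's theorem (using that $H\oplus H\cong H$ and that $h_0\oplus\rho$ agrees with $h_0$ up to the controlled perturbation on the finite set ${\cal F}$ — more carefully, one takes $h$ approximately unitarily equivalent to $h_0$ on ${\cal F}$ while being exactly faithful and full). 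Combining, $\|L(a)-h(a)\|<\ep$ on ${\cal F}$, contradicting the assumption and proving the theorem.

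The main obstacle I expect is the second step: arranging that the perturbed map $L'$ genuinely lands in a \SCA\ $C$ with $C\cap{\cal K}=\{0\}$ \emph{while keeping a lower norm bound} $\|L'(a)\|\ge\lambda\|a\|$, all uniformly in the quantifiers, so that Theorem \ref{MT-1} can be invoked with constants depending only on $\ep,{\cal F},\sigma$. The hypothesis only controls $\pi\circ L$ on a finite set ${\cal H}$, so one must choose ${\cal H}$ carefully — large enough (a $C^*$-norm-dense-enough finite set, plus products controlling how norms propagate through the almost-multiplicative structure) that $\|\pi\circ L(c)\|\ge\sigma\|c\|$ on ${\cal H}$ forces, via an $\ep/\dt$-argument and functional calculus, a lower bound of the form $\|\pi\circ L(a)\|\ge (\sigma/2)\|a\|$ on a set dense enough to run the classification input, and so that the passage to $\ell^\infty/c_0$ and back does not destroy this. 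The quasidiagonality of $A$ is what makes the target subalgebra available at all; the UCT/amenability is what Theorem \ref{MT-1} needs; the delicate bookkeeping is purely in the fullness constants, and that is where the proof will do its real work.
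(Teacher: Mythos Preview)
Your second step has a genuine gap, and it is exactly the obstacle you flagged but did not resolve. The hypothesis $\|\pi\circ L(c)\|\ge\sigma\|c\|$ on a finite set ${\cal H}$ does not prevent $L$ from having large compact components; indeed, the whole point of Theorem \ref{Main} over Theorem \ref{MT-1} is that $L$ \emph{is} allowed to hit the compacts. Your proposed ``compression'' of $L$ to a nearby $L'$ landing in a unitary conjugate of a fixed quasidiagonal essential representation $\iota(A)$ is not a procedure: an arbitrary almost-multiplicative \cpc\ map bears no a priori relation to any specific representation of $A$, and there is no mechanism that forces $L'({\cal G})\subset C$ with $C\cap{\cal K}=\{0\}$ while keeping $L'\approx L$ on ${\cal F}$. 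The sequence-algebra reduction you sketch does not help here either: the limit map $\Phi$ lives in $\ell^\infty(B(H))/c_0(B(H))$, not in a single $B(H)$ with a compact-free subalgebra available, and nothing in that passage strips out the compact part of the individual $L_n$.

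The paper's proof does \emph{not} reduce to Theorem \ref{MT-1}. It works in the Calkin algebra first: apply the absorbing result Corollary \ref{MC1} to $\pi\circ L$ (using that $B(H)/{\cal K}$ is purely infinite simple with property P3 and trivial $K_0$), obtaining an injective $h_0:A\to O_2$ and $j_O:O_2\to B(H)/{\cal K}$ with $v_1^*\diag(\pi\circ L,\,j_O\circ h_0)v_1\approx\pi\circ L$. Lift $j_O$ to $J_O:O_2\to B(H)$ (possible since $O_2$ is $KK$-contractible) and set $h_2=J_O\circ h_0$. Now apply Lemma \ref{Luniq} in $B(H)$ to get $\diag(L,\bar h_2)\approx u_2^*\diag(h_2,\bar h_2)u_2$; combined with the Calkin absorption this yields $L(a)\approx (\text{conjugate of }\diag(h_2,\bar h_2))+c_a$ with $\|\pi(c_a)\|$ small. \emph{This} is where quasidiagonality enters, and its role is different from what you propose: since $h_2$ is a faithful representation with $h_2(A)\cap{\cal K}=\{0\}$, Lemma \ref{Lquasi-1} supplies finite-rank projections $e_n$ asymptotically commuting with $h_2(A)$. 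Cutting by a large enough $e_n$ separates the small-in-Calkin piece $c_a$ (absorbed by $q_nL(a)q_n$) from a piece $(1-e_n)h_2(a)(1-e_n)$ that is still almost a \hm\ with essential image. Two more applications of Lemma \ref{Luniq}/\ref{Luniqh} then replace everything by a single full faithful \hm\ $H_2$. So quasidiagonality is used on the \emph{auxiliary} \hm\ $h_2$, to excise a compact error, not to provide a target subalgebra for $L$.
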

A simplified version of the above is to choose ${\cal H}={\cal G}.$
Let us point out that
the fullness condition in the second part of \eqref{Main-1} is 
what one expected and much weaker 
than the ones in Theorem \ref{MT-1}.

As a corollary,  we have the following:

\begin{cor}\label{CM0}
Let $A$ be a separable simple quasidiagonal \CA\, in ${\cal N},$ $H$ be an infinite dimensional separable Hilbert space. 

For any $\ep>0,$ any finite subset ${\cal F},$ 
and $0<\sigma\le 1,$  there are $\dt>0$ and  a finite subset ${\cal G}\subset A$ 
satisfying the following:
For any  contractive positive linear map  $L: A\to B(H)$ such that 
$\|\pi\circ L\|\ge \sigma$ and 
\beq\label{CM-1}
\|L(ab)-L(a)L(b)\|<\dt\tforal a, b\in {\cal G}, 
\eneq
there is a faithful representation $h: A\to B(H)$ such that
\beq
\|L(a)-h(a)\|<\ep\tforal a\in {\cal F}.
\eneq
\end{cor}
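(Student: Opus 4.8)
The plan is to derive this from Theorem \ref{Main} by producing, from the single finite set $\mathcal G$ and the single condition $\|\pi\circ L\|\ge\sigma$, the stronger-looking fullness hypothesis $\|\pi\circ L(c)\|\ge\sigma'\|c\|$ on a prescribed finite set $\mathcal H$. The point is that when $A$ is simple, fullness at one point propagates: if $\pi\circ L$ is almost multiplicative and has norm bounded below, then because $A$ has no nontrivial closed two-sided ideals, there cannot be a ``large'' piece of $A$ on which $\pi\circ L$ collapses. I would make this quantitative as follows. First, apply Theorem \ref{Main} with the same $\mathcal F$, with $\sigma$ replaced by some $\sigma_1$ to be chosen, and with the finite set $\sigma$-accuracy it demands; call the output data $\delta_1>0$ and finite sets $\mathcal G_1,\mathcal H_1\subset A$. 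The job is then to choose $\delta>0$ and $\mathcal G\supset\mathcal G_1$ so that the hypotheses of Corollary \ref{CM0} force $\|\pi\circ L(c)\|\ge\sigma_1\|c\|$ for all $c\in\mathcal H_1$, and $\|L(ab)-L(a)L(b)\|<\delta_1$ for $a,b\in\mathcal G_1$ (the latter is immediate by taking $\mathcal G\supseteq\mathcal G_1$ and $\delta\le\delta_1$).

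For the fullness propagation, I would argue by contradiction using a standard sequence/compactness argument. Suppose no finite $\mathcal G$ and $\delta$ work; then there is a fixed $c_0\in\mathcal H_1$ and a sequence of contractive positive maps $L_n:A\to B(H)$ that are $\mathcal G_n$-$(1/n)$-multiplicative along an increasing exhaustion $\mathcal G_n\nearrow$ (dense in $A$), with $\|\pi\circ L_n\|\ge\sigma$, but $\|\pi\circ L_n(c_0)\|<\sigma_1\|c_0\|$. Passing to the reduced product $B(H)_\infty/\mathcal K_\infty$ (or more precisely $\prod_n (B(H)/\mathcal K) / \bigoplus$), the induced map $\bar L=(\pi\circ L_n)_n$ is a genuine $*$-homomorphism on $A$ (almost-multiplicativity becomes exact in the limit, and positivity plus the Cauchy–Schwarz/functional-calculus estimates for order-zero-type maps force it to be multiplicative and self-adjoint on the limit), it is nonzero because $\|\bar L\|=\lim\|\pi\circ L_n\|\ge\sigma>0$, hence by simplicity of $A$ it is \emph{injective}, hence isometric: $\|\bar L(c_0)\|=\|c_0\|$. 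But $\|\bar L(c_0)\|=\limsup_n\|\pi\circ L_n(c_0)\|\le\sigma_1\|c_0\|<\|c_0\|$, a contradiction once $\sigma_1<1$. (Technically one should choose a free ultrafilter to make $\bar L$ honestly multiplicative; this is routine.)

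With $\sigma_1$ fixed as, say, $\sigma/2$ or indeed any value in $(0,1)$, the previous paragraph yields $\delta>0$ and finite $\mathcal G'\subset A$ such that $\|\pi\circ L\|\ge\sigma$ together with $\mathcal G'$-$\delta$-multiplicativity implies $\|\pi\circ L(c)\|\ge\sigma_1\|c\|$ for every $c\in\mathcal H_1$ simultaneously (take the max of the finitely many $\mathcal G'_{c}$ and min of the $\delta_c$ over $c\in\mathcal H_1$). Setting $\mathcal G=\mathcal G'\cup\mathcal G_1$ and shrinking $\delta\le\delta_1$, any $L$ satisfying \eqref{CM-1} and $\|\pi\circ L\|\ge\sigma$ then satisfies both clauses of \eqref{Main-1} with constant $\sigma_1$ on $\mathcal H_1$, so Theorem \ref{Main} provides a faithful full representation $h:A\to B(H)$ with $\|L(a)-h(a)\|<\ep$ for $a\in\mathcal F$. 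Since $A$ is simple, any nonzero representation is automatically full, so the conclusion of the corollary as stated follows.

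The main obstacle I expect is the limiting argument that $\bar L$ is a genuine $*$-homomorphism: starting only from a contractive positive (not completely positive, and not obviously self-adjoint) almost-multiplicative map, one must check that the limit map is multiplicative and $*$-preserving. This should follow from the fact that a unital-ish positive map which is exactly multiplicative on a $C^*$-algebra is automatically a $*$-homomorphism, combined with standard estimates controlling $\|L_n(a^*)-L_n(a)^*\|$ in terms of the multiplicative defect; but making the bookkeeping clean — especially tracking how the defect on $\mathcal G_n$ controls the defect on products of elements of $\mathcal G_n$, and handling the non-unital case — is where the real work lies. An alternative that sidesteps some of this is to quote whatever version of these reduced-product/ultrapower facts is already established in the body of the paper for the proof of Theorem \ref{Main} itself, since that theorem surely develops exactly this machinery.
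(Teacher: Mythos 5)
Your proposal is correct and takes essentially the same route as the paper. The paper isolates your ``fullness propagation'' step as a separate lemma (Lemma \ref{Lsimple}): for $A$ simple and amenable, given a finite $\mathcal H$ and $0<\sigma<1$, it produces $\eta>0$ and a finite $\mathcal G_2$ so that any contractive positive, $\mathcal G_2$-$\eta$-multiplicative $L$ with $\|L\|\ge\sigma$ satisfies $\|L(c)\|\ge\tfrac12\|c\|$ for all $c\in\mathcal H$; the lemma is proved exactly as you propose, by a sequence/reduced-product contradiction argument exploiting that a nonzero $*$-homomorphism out of a simple $C^*$-algebra is automatically injective, hence isometric. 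The proof of the corollary then runs Theorem \ref{Main} for $\epsilon$, $\mathcal F$ (with $\sigma=1/2$ in the role of your $\sigma_1$), takes the resulting $\mathcal H$, feeds it into the lemma applied to $\pi\circ L:A\to B(H)/\mathcal K$, and takes $\mathcal G=\mathcal G_1\cup\mathcal G_2$, $\delta=\min\{\delta_1,\eta\}$ — exactly your scheme.

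One remark on the ``main obstacle'' you flag at the end: it is not actually a source of difficulty. A positive linear map between $C^*$-algebras is automatically self-adjoint (write $a=x+iy$ with $x,y$ self-adjoint and use that $\phi$ preserves positivity, hence self-adjointness, on each), so there is no need to estimate $\|L_n(a^*)-L_n(a)^*\|$ separately; positivity passes to the quotient and gives $*$-preservation for free. Multiplicativity of the limit map on all of $A$ is likewise automatic once you have it on the dense set $\bigcup_n\mathcal G_n$, because the limit map is a priori bounded (contractive), so there is no delicate bookkeeping of defects on products. The paper accordingly writes ``then $\phi$ is a homomorphism'' without further comment, and you can do the same.
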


In 
Corollary \ref{CM0}, 
the condition that $\|\pi\circ L\|\ge \sigma$ is actually necessary 
when $A$ is, in addition, unital and non-elementary (see \ref{Remarks6}).


For discrete amenable groups, 
we also offer the following:

\begin{thm}\label{Main2}
Let $G$ be a countable discrete amenable group and $H$ be an infinite dimensional Hilbert space.
Let $\ep>0$ and ${\cal F}\subset G$ be a finite subset and $1\ge \sigma>0.$
Then there exists $\dt>0,$ a finite subset ${\cal G}\subset G$ and 
a finite subset ${\cal S}\subset \C[G]$ such that, if 
$\phi: G\to U(B(H))$ is a map 
satisfying the condition that
\beq\label{Main2-00}
&&\|\phi(fg)-\phi(f)\phi(g)\|<\dt\tforal f,g\in {\cal G}\tand\\\label{MM-2}
&&\|\pi\circ \tilde \phi(a)\|\ge \sigma\|a\|\tforal a\in {\cal S}
\eneq
($\tilde \phi$ is the linear extension of $\phi$---see Definition \ref{dtildephi}),
then there exists a \hm\, $h: G\to U(B(H))$ such that
\beq
\|\phi(f)-h(f)\|<\ep\tforal f\in {\cal F}.
\eneq
Moreover, $h$ extends a faithful and full representation of $C^*_r(G).$
\end{thm}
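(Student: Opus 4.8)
The plan is to derive Theorem~\ref{Main2} from Theorem~\ref{Main} applied to the group \CA\ $A:=C^*(G)$, passing back and forth between quasi-representations of $G$ and almost multiplicative maps on $C^*(G)$ by Følner averaging. First note that, since $G$ is countable discrete and amenable, $A=C^*(G)=C^*_r(G)$ is a separable unital nuclear \CA; it lies in ${\cal N}$ (amenable groups have the Haagerup property, hence satisfy Baum--Connes by Higson--Kasparov, so $C^*_r(G)$ satisfies the UCT), and it is quasidiagonal (well known for $C^*_r(G)$ with $G$ discrete amenable; it follows, e.g., from the quasidiagonality theorem of Tikuisis--White--Winter applied to the canonical faithful trace). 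Thus $A$ is a separable quasidiagonal \CA\ in ${\cal N}$, and Theorem~\ref{Main} applies to it. Write $u_g\in A$ for the canonical unitary of $g\in G$, let $(e_s)_{s\in G}$ be the standard basis of $\ell^2(G)$, and let $\rho\colon A\to B(\ell^2(G)\otimes H)$ be the faithful representation with $\rho(u_g)=\lambda_g\otimes 1_H$, where $\lambda$ is the left regular representation of $G$.

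Given $\ep>0$, a finite subset ${\cal F}\subset G$ and $0<\sigma\le1$, fix $\sigma'$ with $0<\sigma'<\sigma$ and apply Theorem~\ref{Main} to $A$ with tolerance $\ep/2$, finite subset ${\cal F}_A:=\{u_f:f\in{\cal F}\}$, and constant $\sigma'$; this yields $\dt_A>0$ and finite subsets ${\cal G}_A,{\cal H}_A\subset A$. Using density of $\C[G]$ in $A$ with norm control, choose for each element of ${\cal G}_A\cup{\cal H}_A$ a nearby element of $\C[G]$, and let ${\cal S}\subset\C[G]$ consist of these approximants together with all $u_g$, products $u_gu_h$, and differences $u_{gh}-u_gu_h$ produced when the approximants of ${\cal G}_A\cup{\cal H}_A$ are expanded. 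Let ${\cal E}\subset G$ be the finite set of group elements occurring above, pick a sufficiently invariant finite Følner set $F\subset G$ with ${\cal E}\subset F$, set ${\cal G}:=F$ and $\dt:=\eta$, where $\eta>0$ is the invariance parameter, chosen small enough that all estimates below close.

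Now let $\phi\colon G\to U(B(H))$ satisfy \eqref{Main2-00} and \eqref{MM-2} with these ${\cal G},\dt,{\cal S}$. Since each $\phi(s)$ is unitary, $V\colon H\to\ell^2(G)\otimes H$, $V\xi:=|F|^{-1/2}\sum_{s\in F}e_s\otimes\phi(s)^*\xi$, is an isometry; put $L:=V^*\rho(\,\cdot\,)V\colon A\to B(H)$, a unital completely positive (in particular contractive positive) linear map. A direct computation gives
\beq\nonumber
L(u_g)=\frac1{|F|}\sum_{s\in F,\ gs\in F}\phi(gs)\phi(s)^*,
\eneq
so the $\dt$-multiplicativity of $\phi$ on ${\cal G}\supset{\cal E}$ and the Følner property of $F$ make $\|L(u_g)-\phi(g)\|$ as small as desired for $g\in{\cal E}$; hence $L$ is uniformly close on ${\cal S}$ to the linear extension $\td\phi$. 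Choosing the parameters small enough, this forces (i) $L$ is ${\cal G}_A$-$\dt_A$-multiplicative (expand elements of ${\cal G}_A$ into the finitely many words of ${\cal S}$ and use almost multiplicativity of $\phi$), and (ii) $\|\pi\circ L(c)\|\ge\sigma'\|c\|$ for all $c\in{\cal H}_A$ (since $\pi\circ L$ is close to $\pi\circ\td\phi$ on ${\cal S}$, each $c\in{\cal H}_A$ is close in norm to an element of ${\cal S}$, and \eqref{MM-2} holds). Theorem~\ref{Main} then provides a faithful and full representation $h_A\colon A\to B(H)$ with $\|L(a)-h_A(a)\|<\ep/2$ for all $a\in{\cal F}_A$. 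Set $h(g):=h_A(u_g)$. Since $h_A$ is a unital $*$-homomorphism, $h(g)\in U(B(H))$ and $h(gg')=h_A(u_gu_{g'})=h(g)h(g')$, so $h$ is a group \hm\ into $U(B(H))$, and for $f\in{\cal F}$,
\beq\nonumber
\|\phi(f)-h(f)\|\le\|\phi(f)-L(u_f)\|+\|L(u_f)-h_A(u_f)\|<\ep/2+\ep/2=\ep.
\eneq
Since $h$ extends $h_A$, a faithful and full representation of $C^*_r(G)=A$, the ``moreover'' clause follows.

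The main obstacle is the construction and control of $L$: from a merely \emph{local} almost multiplicativity hypothesis on $\phi$ we must manufacture a contractive positive linear map on all of $C^*(G)$ that is \emph{simultaneously} almost multiplicative on the finite set ${\cal G}_A$ prescribed by Theorem~\ref{Main}, close to $\phi$ on $\{u_f:f\in{\cal F}\}$, and full in the sense of \eqref{Main-1}; this dictates the intertwined choices of ${\cal G}$, ${\cal S}$, $\dt$ and of the Følner set $F$, and it is where amenability of $G$ enters twice --- to run the averaging so that the compression $V^*\rho(\,\cdot\,)V$ is close to $\phi$ on group elements, and to guarantee that $C^*_r(G)=C^*(G)$ is quasidiagonal and in ${\cal N}$.
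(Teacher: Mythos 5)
Your proof is correct and reduces Theorem~\ref{Main2} to Theorem~\ref{Main} in exactly the same way the paper does — the whole weight is in producing a contractive positive linear map $L\colon C_r^*(G)\to B(H)$ that is almost multiplicative, close to $\tilde\phi$ on the relevant finite set, and full. Where you genuinely diverge is in \emph{how} you manufacture $L$. The paper isolates this step as Lemma~\ref{Ldiscrete}, which it proves nonconstructively: assume the lemma fails, assemble the failing $\phi_n$ into a group homomorphism of $G$ into $l^\infty(B)/c_0(B)$, extend to a $*$-homomorphism of $C_r^*(G)$ using amenability of $G$, and then invoke the Choi--Effros lifting theorem (nuclearity of $C_r^*(G)$) to lift to a \cpc\ into $l^\infty(B)$, reading off the contradiction coordinatewise. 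You instead build $L$ explicitly: a Følner averaging $V\xi=|F|^{-1/2}\sum_{s\in F}e_s\otimes\phi(s)^*\xi$, $L=V^*\rho(\cdot)V$, with the correct identity $L(u_g)=|F|^{-1}\sum_{s\in F,\,gs\in F}\phi(gs)\phi(s)^*$ and the estimate $\|L(u_g)-\phi(g)\|\le\dt+|F\setminus g^{-1}F|/|F|$. This is a more quantitative, Kazhdan-style argument: it replaces two black boxes (ultraproduct extension and Choi--Effros lifting) with a single concrete compression, and in principle gives explicit control of $\dt$ and ${\cal G}$ in terms of the Følner function of $G$. The price is that you must track by hand that ${\cal S}$, ${\cal E}$, the Følner set $F$, and the tolerance $\eta$ all close up against the data produced by Theorem~\ref{Main}; the paper's Lemma~\ref{Ldiscrete} handles this bookkeeping once and for all. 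Two minor remarks: the elements ``$u_{gh}-u_gu_h$'' you propose to add to ${\cal S}$ are identically zero in $\C[G]$ (since $u_gu_h=u_{gh}$ there), so they should be dropped; and the UCT for $C_r^*(G)$ of an amenable discrete group is best cited directly from Tu (as the paper does) rather than via the chain ``Haagerup $\Rightarrow$ Baum--Connes $\Rightarrow$ UCT,'' the last step of which is not a formal implication but essentially Tu's theorem itself.
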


Example \ref{ExVoi}  shows that the fullness condition
\eqref{MM-2} in Theorem \ref{Main2} cannot be removed even in the case 
that $G=\Z^2.$ 

This paper is organized as follows:
Section 2 is a preliminary.  In section 3, we provide the proof of Theorem \ref{MT-1} and 
Theorem \ref{Tpurely}. Section 4 is a discussion of Properties P1, P2, and P3.
In section 5, we present some absorbing results.  Section 6 is devoted to the proof of 
the main result, Theorem \ref{Main} and its corollary.  
In section 7, we prove Theorem \ref{Main2}.   In the last section, section 8, 
we first present a simple example which shows that, 
for Theorem \ref{Main}, the second condition in \eqref{Main-1} cannot be removed.
We also  present an 
 example of Voiculescu, which shows that the answer to {\bf Q2}
is negative without the fullness condition \eqref{MM-2} even in the case that $G=\Z^2.$  

\section{Fullness and Regularity}

\begin{df}\label{conventions}
All ideals in \CA s in this paper are closed, two-sided ideals. 
If $A$ is a  unital \CA, then $U(A)$ is the unitary group of $A.$
If $A$ is not unital, denote by $\tilde A$ the unitization of $A.$
Denote by $A^{\bf 1}$ the (closed) unit ball of $A$ and $A^{\bf 1}_+=A^{\bf 1}\cap A_+.$ 
\end{df}

\begin{df}
Let $\{B_n\}$ be a sequence of \CA s.
Denote by $l^\infty(\{B_n\})$ the $C^*$-product of $\{B_n\},$ 
i.e.
$$l^\infty(\{B_n\})=\{\{b_n\}: b_n\in B_n\andeqn  \sup_n\|b_n\|<\infty\}.
$$
Denote by $c_0(\{B_n\})$ the $C^*$-direct sum of $\{B_n\},$
i.e., 
$$c_0(\{B_n\})=\{\{b_n\}\in l^\infty(\{B_n\}): \lim_{n\to\infty}\|b_n\|=0\}.$$

Note that $c_0(\{B_n\})$ is an ideal of $l^\infty(\{B_n\}).$  If $B_n=B$ for all $n\in \N,$
we may write $l^\infty(B)$ and $c_0(B)$ instead. 
\end{df}

\begin{df}\label{Dvarpi}
We will fix a free  ultrafilter $\varpi$ of subsets of $\N$ which may be viewed as an element in $\bt(\N)\setminus \N$
(where $\bt(\N)$ is the Stone-{\v C}ech compactification of $\N$).
Write 
\beq
c_{0, \varpi}(\{B_n\})=\{\{b_n\}\in l^\infty(\{B_n\}): 
\lim_{n\to \varpi} \|b_n\|=0\}.
\eneq
Define $q_\varpi(\{B_n\}):=l^\infty(\{B_n\})/c_{0, \varpi}(\{B_n\}).$
Denote by $\pi_\varpi: l^\infty(\{B_n\})\to q_\varpi(\{B_n\})$ the quotient map. 
\end{df}

\begin{df}
Let $B$ be a \CA, $a, b\in B$ and let $\ep>0.$ 
We write 
\beq
a\approx_\ep b, 
\eneq
if $\|a-b\|<\ep.$   

Let $A$ be another \CA\, and $L_1, L_2: A\to B$ be two maps
and let ${\cal F}\subset B$ be a subset.
We write 
\beq
L_1(a)\approx_\ep L_2(a)\,\, {\rm on}\,\,\, {\cal F},
\eneq
if $L_1(a)\approx_\ep L_2(a)$ for all $a\in {\cal F}.$ 
\end{df}

\begin{df}\label{Dffdt}
Fix $\dt>0.$ Define 
$
f_\dt\in C(\R_+)
$
by $f_\dt(t)=0$ if $t\in [0, \dt/2],$ $f_\dt(t)=1$ if $t\in [\dt, \infty)$ and linear 
in $(\dt/2, \dt).$
\end{df}

\begin{df}
Let $A$ and $B$ be \CA s, and $L: A\to B$ be a linear map.
If $L$ is a completely positive contraction, we may write 
that $L$ is a \cpc. 
\end{df}

\begin{df}
Denote by ${\cal N}$ the class of separable amenable \CA s which satisfy the UCT
(see \cite{RS}).
\end{df}

Fix a separable amenable \CA\, $A$ and a positive number $M>0.$ 
Let $L_n: A\to B_n$ (any \CA\, $B_n$) be a sequence of positive linear maps 
such that  $\|L_n\|\le M$ and 
\beq
\lim_{n\to\infty}\|L_n(ab)-L_n(a)L_n(b)\|=0\rforal a, b\in A.
\eneq 
Define $\Lambda: A\to l^\infty(\{B_n\})$ by $\Lambda(a)=\{L_n(a)\}$ 
and $\psi: A\to l^\infty(\{B_n\})/c_0(\{B_n\})$  by $\psi=\Pi\circ \Lambda,$
where $\Pi:  l^\infty(\{B_n\})\to l^\infty(\{B_n\})/c_0(\{B_n\})$ is the quotient map.
Then $\psi$ is a \hm. 

By  applying  the  Choi-Effros Lifting Theorem (\cite{CE}), one obtains the following
proposition which will be used often in this paper.

\begin{prop}\label{PC-E}
Let $A$ be a separable amenable \CA.

For any $\ep>0$ and any finite subset ${\cal F},$ 
there are $\dt>0$ and a finite subset ${\cal G}\subset A$ satisfying the following:
For any positive linear  map $L: A\to B$ (for any \CA\, $B$) such that
$\|L\|\le 1$ and
\beq
\|L(ab)-L(a)L(b)\|<\dt \tforal a,\,b \in A,
\eneq
there is a completely positive linear map  $\phi: A\to B$ such that
\beq
\|L(a)-\phi(a)\|<\ep\tforal a\in {\cal F}.
\eneq
Moreover,  if $0<\sigma\le 1$ is given and one assumes that $\|L(a)\|\ge \sigma\|a\|$
for all $a\in {\cal F},$ we may choose $h$ such that
$\|h(a)\|\ge (\sigma/2)\|a\|$ for all $a\in {\cal F}.$
\end{prop}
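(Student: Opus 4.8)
The plan is to derive Proposition \ref{PC-E} from the Choi--Effros Lifting Theorem by a standard compactness/diagonalization argument. First I would argue by contradiction: suppose the statement fails for some $\ep_0>0$ and some finite subset ${\cal F}_0\subset A$. Then, enumerating an increasing sequence of finite subsets ${\cal G}_n\subset A$ whose union is dense in $A$, with $\dt_n=1/n$, there are positive linear maps $L_n: A\to B_n$ (for some \CA s $B_n$) with $\|L_n\|\le 1$ and $\|L_n(ab)-L_n(a)L_n(b)\|<1/n$ for all $a,b\in{\cal G}_n$, yet no completely positive linear map $\phi: A\to B_n$ satisfies $\|L_n(a)-\phi(a)\|<\ep_0$ on ${\cal F}_0$. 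Since the ${\cal G}_n$ exhaust a dense subset and the $L_n$ are uniformly bounded, this yields $\lim_{n\to\infty}\|L_n(ab)-L_n(a)L_n(b)\|=0$ for all $a,b\in A$.

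Next I would form the C$^*$-product $l^\infty(\{B_n\})$ and the direct sum $c_0(\{B_n\})$ as in the preamble, set $\Lambda(a)=\{L_n(a)\}$, and let $\psi=\Pi\circ\Lambda: A\to l^\infty(\{B_n\})/c_0(\{B_n\})$, which is a \hm\ (here one uses that each $L_n$ is positive, hence self-adjoint, and that the defect in multiplicativity vanishes in the quotient; positivity of $\psi$ plus $\psi$ being a $*$-preserving linear map forces it to be a genuine $*$-\hm). Since $A$ is separable and amenable (nuclear), the Choi--Effros Lifting Theorem applies to the surjection $\Pi: l^\infty(\{B_n\})\to l^\infty(\{B_n\})/c_0(\{B_n\})$ and produces a completely positive contractive lift $\tilde\psi: A\to l^\infty(\{B_n\})$ of $\psi$. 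Writing $\tilde\psi(a)=\{\phi_n(a)\}$ with each $\phi_n: A\to B_n$ completely positive and contractive, the lifting property means $\Lambda(a)-\tilde\psi(a)\in c_0(\{B_n\})$, i.e.\ $\|L_n(a)-\phi_n(a)\|\to 0$ for every $a\in A$. In particular, for $n$ large, $\|L_n(a)-\phi_n(a)\|<\ep_0$ for all $a$ in the finite set ${\cal F}_0$, contradicting the choice of $L_n$. This proves the first assertion.

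For the ``moreover'' part, I would keep the same $\phi_n$ from the lift. If in addition $\|L_n(a)\|\ge\sigma\|a\|$ for all $a\in{\cal F}$, then since $\|L_n(a)-\phi_n(a)\|\to 0$ we get, for $n$ large enough (depending on $\sigma$ and the finite set ${\cal F}$), $\|\phi_n(a)\|\ge\|L_n(a)\|-\|L_n(a)-\phi_n(a)\|\ge\sigma\|a\|-(\sigma/2)\|a\|=(\sigma/2)\|a\|$ for all $a\in{\cal F}$; absorbing this threshold into the contradiction setup (shrinking $\dt$ / enlarging $n$) gives the stated bound on the chosen completely positive map. Here I would note the harmless typo in the statement: the map whose existence is asserted is called $\phi$ in the displayed inequality but $h$ in the ``moreover'' clause; they refer to the same map, which I will consistently call $\phi$.

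The only genuine input is the Choi--Effros Lifting Theorem, which requires $A$ to be separable and nuclear --- both hypotheses are in place. The main point to be careful about, rather than a real obstacle, is verifying that $\psi$ is actually a $*$-\hm\ (not merely an approximately multiplicative map): this uses that each $L_n$ is \emph{positive} (so self-adjoint) together with the vanishing of $\|L_n(ab)-L_n(a)L_n(b)\|$ in the limit, so that $\psi(ab)=\psi(a)\psi(b)$ and $\psi(a^*)=\psi(a)^*$ hold exactly in the quotient. Everything else is a routine diagonal argument with uniformly bounded maps and a dense exhausting sequence of finite subsets.
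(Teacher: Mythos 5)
Your proposal is correct and follows exactly the route the paper intends: the paragraph immediately preceding the proposition sets up the same $l^\infty(\{B_n\})/c_0(\{B_n\})$ construction and notes that $\psi=\Pi\circ\Lambda$ is a homomorphism, and then the paper simply says the proposition follows by the Choi--Effros Lifting Theorem. You have supplied the standard contradiction/diagonalization bookkeeping that the paper leaves implicit, including the observation that positivity makes the $L_n$ $*$-preserving and the remark that the ``moreover'' clause follows by shrinking $\ep$ relative to $\sigma$ and $\min_{a\in\mathcal F\setminus\{0\}}\|a\|$.
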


\begin{df}\label{Dlocfull}
Let $A$ and $B$ be \CA s.
An element $b\in B$ is {\it full}, if any (closed) ideal containing $b$ is $B.$ 
Suppose that $L: A\to B$ is a positive linear map.
We say $L$ is {\it full}, if 
$L(a)$  is full 
for all $a\in A_+\setminus \{0\}.$

Let ${\cal F}\subset A_+\setminus \{0\}$ be a subset of $A.$ We say $L$ is ${\cal F}$-full, 
if  
$L(a)$  is full 
 for all $a\in {\cal F}.$

Let $N: A_+\setminus \{0\}\to \N$ and $K: A_+\setminus \{0\}\to \R_+$
be two maps. Write $T=(N, K): A_+\setminus \{0\}\to (\N, \R_+).$

Suppose that $L: A\to B$ is a positive linear map and $H\subset A_+\setminus \{0\}$ is a subset.
We say that $L$ is (uniformly) $T$-$H$-full, if, for any $a\in H,$  any
$b\in B^{\bf 1}_+,$ and $\ep>0,$ there exist
$x_1, x_2,...,x_{N(a)}$ with $\|x_i\|\le K(a),$ $i=1,2,...,N(a),$ such that 
\beq
\|\sum_{i=1}^{N(a)} x_i^* L(a)x_i-b\|<\ep
\eneq
Often, later, we require that, if $a\in H,$ then $f_{\|a\|/2}(a)\in H.$
\end{df}

\begin{rem}\label{Rfullid}
In Definition \ref{Dlocfull}, suppose that $B$ is unital, 
then, for any $a\in H$ and $\ep>0,$ there exist $x_1, x_2,...,x_{N(a)}$ with $\|x_i\|\le K(a),$ $i=1,2,...,N(a),$ such that 
\beq
\|\sum_{i=1}^{N(a)} x_i^* L(a)x_i-1_B\|<\ep.
\eneq
Choosing $0<\ep<1,$ then there is $c\in B_+$ with $\|c\|<{1\over{1-\ep}}$ 
such that
\beq
\sum_{i=1}^{N(a)} cx_i^*L(a)x_ic=1_B.
\eneq
Put $y_i=x_ic,$ $i=1,2,...,N(a).$ Then  $\|y_i\|\le K(a)(1/(1-\ep)),$ $i=1,2,...,N(a).$

Conversely, 
suppose that 
there are $y_1, y_2,...,y_{N(a)}\in B$ with $\|y_i\|\le K(a),$ $i=1,2,...,N(a),$
such that
\beq
\sum_{i=1}^{N(a)}y_i^*L(a)y_i=1_B.
\eneq
Then, for any $b\in B^{\bf 1}_+,$
\beq
\sum_{i=1}^{N(a)}b^{1/2}y_i^* L(a)y_i b^{1/2}=b.
\eneq
\end{rem}

\begin{df}
Let $H$ be an infinite dimensional Hilbert space.
Denote by $B(H)$ the \CA\, of all bounded linear operators on $H$ and 
${\cal K}$ the \CA s of all compact operators on $H.$ 
Denote by $\pi: B(H)\to B(H)/{\cal K}$ the quotient map.
\end{df}

%
%
%
%

\begin{prop}\label{Pfull}
Let $A$ be a \CA\, and $L: A\to B(H),$ where $H$ is an infinite dimensional separable Hilbert space,
be a positive linear map.  Suppose that 
${\cal G}\subset A_+\setminus \{0\}$ is a finite subset such that 
$\|\pi\circ L(g)\|\ge \lambda_g>0$ for all $g\in {\cal G},$ where $\pi: B(H)\to B(H)/{\cal K}$ is the quotient map.
Define $N(a)=1$  and $K(a)=\sqrt{r/\lambda}$ for all 
$a\in A_+\setminus \{0\},$  where $\lambda=\min\{\lambda_g: g\in {\cal G}\}.$
Then $L$ is $(N(g), K(g))$-${\cal G}$-full.
\end{prop}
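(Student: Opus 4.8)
Unwinding the definition of $(N,K)$-${\cal G}$-fullness in the case $N(g)=1$, what I must produce is this: for each $g\in{\cal G}$, each $b\in B(H)^{\bf 1}_+$ and each $\ep>0$, a single element $x\in B(H)$ with $\|x\|\le K(g)$ and $\|x^*L(g)x-b\|<\ep$. Put $a:=L(g)$; since $L$ is positive and $g\in A_+$ we have $a\in B(H)_+$, and the only information available is the essential-norm bound $\|\pi(a)\|\ge\lambda_g\ge\lambda$. The plan is to use this bound to produce an infinite-rank projection $e$ on which $a$ is bounded below, and then to transport $b$ isometrically onto $eH$.

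So fix $\mu$ with $0<\mu<\lambda$ (it will be chosen close to $\lambda$ at the very end) and let $e:=\chi_{(\mu,\infty)}(a)\in B(H)$ be the associated spectral projection, via Borel functional calculus. I would first observe that $e$ has infinite rank: if not, $ae$ is a finite-rank, hence compact, operator, so $\pi(a)=\pi\big(a(1-e)\big)$; but $\|a(1-e)\|=\|a\chi_{[0,\mu]}(a)\|\le\mu<\lambda$, contradicting $\|\pi(a)\|\ge\lambda$. Since $e$ commutes with $a$ and $eae\ge\mu e$, the element $eae$ is invertible in the corner $eB(H)e$; put $d:=(eae)^{-1/2}$ (inverse square root computed in $eB(H)e$), so that $d=ede$, $\|d\|\le\mu^{-1/2}$, and $dad=e$ (the last identity is a routine computation using that $e$ commutes with $a$). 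Finally, because $e$ is an infinite-rank projection on a separable Hilbert space, there is an isometry $v\in B(H)$ with $v^*v=1$ and $vv^*=e$, whence $v^*ev=v^*(vv^*)v=(v^*v)^2=1$.

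Now I would set $x:=\sqrt{\mu/\lambda}\;dvb^{1/2}$. Then $\|x\|\le\sqrt{\mu/\lambda}\,\|d\|\,\|v\|\,\|b^{1/2}\|\le\sqrt{\mu/\lambda}\cdot\mu^{-1/2}=\lambda^{-1/2}\le K(g)$, while
\[
x^*ax=(\mu/\lambda)\,b^{1/2}v^*(dad)vb^{1/2}=(\mu/\lambda)\,b^{1/2}(v^*ev)b^{1/2}=(\mu/\lambda)\,b,
\]
so $\|x^*L(g)x-b\|=(1-\mu/\lambda)\|b\|\le 1-\mu/\lambda$. Choosing $\mu$ with $\lambda(1-\ep)<\mu<\lambda$ then gives $\|x^*L(g)x-b\|<\ep$, which completes the argument. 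I do not expect any real obstacle here; the one step that requires a moment's thought is turning the essential-norm bound $\|\pi\circ L(g)\|\ge\lambda$ into a genuine infinite-rank spectral projection, and the mild rescaling by $\sqrt{\mu/\lambda}$ is just what trades the unattainable threshold $\mu=\lambda$ for the $\ep$-slack that the definition of $T$-fullness already permits. It is worth noting that $B(H)$ being the codomain is used essentially in two places: $eB(H)e$ is a von Neumann algebra, so $eae$ is invertible in it, and the unit of $B(H)$ is Murray--von Neumann equivalent to every infinite-rank projection --- the latter being precisely why a single $x$ (i.e.\ $N(g)=1$) is enough.
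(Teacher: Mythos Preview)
Your proof is correct and follows essentially the same route as the paper's: both extract, via the spectral theorem, an infinite-rank projection on which $L(g)$ is bounded below by a positive multiple of the identity, and then use an isometry $v$ with $vv^*$ equal to that projection to reach the unit of $B(H)$. The only difference is cosmetic: the paper stops at the inequality $x^*L(g)x\ge 1_{B(H)}$ (taking $x=\sqrt{r/\lambda_g}\,v$) and tacitly invokes Remark~\ref{Rfullid} to pass to an arbitrary $b\in B(H)^{\bf 1}_+$, whereas you insert the inverse square root $d=(eae)^{-1/2}$ to obtain $dad=e$ exactly and then hit $b$ directly with a single $x=\sqrt{\mu/\lambda}\,dvb^{1/2}$, which makes the argument self-contained and even yields the slightly sharper bound $\|x\|\le\lambda^{-1/2}\le\sqrt{r/\lambda}=K(g)$.
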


\begin{proof}
Fix $r>1.$ 
Since 
$\|\pi\circ L(a)\|\ge \lambda_a$ for all $a\in {\cal G},$ 
by the spectral theorem, we have that  $L(a)\ge (\lambda_a/r) p$ for some projection $p\in B(H)\setminus {\cal K}.$
There is a partial isometry $v\in B(H)$ such that
\beq
v^*pv=1_{B(H)}.
\eneq
Set $x(a)=(\sqrt{r/\lambda_a}) v$ for $a\in {\cal G}.$ 
Then $\|x(a)\|= \sqrt{r/\lambda_a}$ for all $a\in {\cal G}.$
We have 
\beq
x(a)^*L(a)x(a)\ge 1_{B(H)}.
\eneq
Hence $L$ is $(N(a), K(a))$-${\cal G}$ -full.
\end{proof}

%
%
%

\begin{df} \label{Dregularity}(\cite[Definition 2.1]{GL-KT})
Let $B$ be a unital \CA. Denote by $U_0(B)$ the path connected component of $U(B)$ containing 
$1_B.$

Fix a map $r_0 : \N \to \Z_+,$  a map $r_1 : \N\to  \Z_+,$ a map $l : \N \times  \N \to \N,$ and 
integers
$s \ge 1$ and $R \ge 1.$  
We shall say a \CA\,   $A$  belongs to the class ${\bf C}_{(r_0,r_1,l ,s,R)},$ if

(a) for any integer $n\ge 1$ and any pair of projections $p, q \in M_n(\tilde A)$ with 
$[p] = [q] \in K_0(A), p \oplus 1_{M_{r_0(n)}}(\tilde A)$ and $q\oplus 1_{M_{r_0(n)}}(\tilde A)$
are Murray–von Neumann equivalent, and moreover, if $p\in M_n(\tilde A)$ and $
q \in M_m(\tilde A)$ and $[p] -[q] \ge 0,$ then there exists
$p'\in  M_{n+r_0(n)}(\tilde A)$ such that $p'\le p \oplus 1_{M_{r_0(n)}}$ and $p'$  is equivalent to 
$q \oplus 1_{M_{r_0(n)}};$

(b) if $k\ge 1,$  and $x \in K_0(A)$ such that $-n[1_{\tilde A}] \le kx \le n[1_{\tilde A}]$
for some integer $n \ge 1,$  then
$-l (n, k)[1_{\tilde A}] \le x \le l (n, k)[1_{\tilde A}];$

(c) the canonical map $U(M_s(\tilde A))/U_0(M_s(\tilde A))\to  K_1(\tilde A)$ is surjective;

(d) if $u \in U(M_n(\tilde A))$ and $[u] = 0$ in $K_1(\tilde A),$ then $u\oplus 1_{M_{r_1(n)}}\in
U_0(M_{n+r_1(n)}(\tilde A));$

(f) ${\rm cer}(M_m(\tilde A)) \le  R$  for all $m \ge 1.$ 

\vspace{0.2in}

If A has stable rank one, and (a), (c), and  (d) hold; and  they hold with $r_0 = r_1 = 0,$ and 
$s=1.$

Let $L\ge \pi$ and $A$ be a \CA.  Let us consider the condition

(f') ${\rm cel}(M_m(\tilde A))\le L$ for all $m\ge  1.$ 

This means that every unitary $u\in U_0(M_m(\tilde A))$  has a continuous path 
$\{u(t): t\in [0,1]\}$ such that $u(0)=u,$ $u(1)=1$ and 
the length of $\{u(t): t\in [0,1]\}$ is no more than $L.$   It is easy to see that
if $A$ satisfies condition (f'), then 
\beq
{\rm cer}(M_m(\tilde A))\le [L/2\pi]+1.
\eneq

Let $r:=r_0=r_1.$ 
Denote by ${\bf A}_{(r, l, s, L)}$ the class of \CA s which satisfy 
condition (a), (b), (c), (d) and (f') for $r_0=r_1=r,$ $l,$ $s$ and $L$ above.


\end{df}

\begin{rem}\label{Rregular}
If $H$ is an infinite dimensional separable Hilbert space, then 
$B(H)\in {\bf A}_{(0, 1, 1, 2\pi)}.$
If $B$ is a purely infinite simple \CA, 
then 
${\rm cel}(B)\le 2\pi$ (see \cite{Phpi}). 
It follows that $B\in {\bf A}_{(0, 1, 2, 2\pi)}.$
\end{rem}

\section{Almost representations}

\begin{df}\label{Dunital}
Let $A$ and $B$ be \CA s and $L: A\to B$ be a \cpc.
Suppose that ${\cal G}\subset A$ is a finite subset and $\dt>0.$
We say $L$ is ${\cal G}$-$\dt$-multiplicative, if 
\beq
\|L(ab)-L(a)L(b)\|<\dt \rforal a, b\in {\cal G}.
\eneq

If, in addition, $A$ is unital, in this paper, when we 
say $L$ is   ${\cal G}$-$\dt$-multiplicative, we always assume that 
$1_A\in {\cal G}.$   Of course, we consider only the case that ${\cal G}$ is large and $\dt$ is small.
Let $M=\max\{\|a\|: a\in {\cal G}\}$ (The most interesting case is the case that $M=1.$).
For example, we may always assume that $\dt<1/32(M+1).$ 
Hence  there is a projection $e\in B$ 
such that
\beq
\|L(1_A)-e\|<2\dt<1/16.
\eneq
Therefore
\beq
\|eL(1_A)e-e\|<2\dt.
\eneq
Choose $x\in eBe_+$ ($x=(eL(1_A)e)^{-1}$ in $eBe$) such that
\beq
xeL(1_A)e=eL(1_A)ex=e.
\eneq
Then 
\beq
\|x-e\|<{2\dt\over{1-2\dt}}.
\eneq
It follows that 
\beq
\|x^{1/2}-e\|\le {4\dt\over{1-2\dt}}
\eneq
Put $d=x^{1/2}$ and 
$\dt_0=(2\dt+{4\dt\over{1-2\dt}}).$  Then
\beq
\|d-L(1_A)\|<\dt_0
\eneq
Define  $L': A\to eBe$ by 
$L'(a)=dL(a)d$ for all $a\in A.$   
Then $L'(1_A)=dL(1_A)d=x^{1/2}eL(1_A)ex^{1/2}=eL(1_A)ex=e.$
Hence $L'$ is a \cpc. 
Moreover,
\beq
&&\hspace{-0.6in}L'(a)=dL(a)d\approx_{8\dt M\over{1-2\dt}} eL(a)e\approx_{2\dt M} L(1_A)L(a)L(1_A)\\
&&\approx_{2\dt} L(a)\rforal a\in {\cal G},\\
&&\hspace{-0.6in}L'(ab)=dL(ab)d\approx_{\dt} dL(a)L(b)d\approx_{M\dt}dL(a)L(1_A)L(b)d\\
&&\approx_{2M\dt_0}dL(a)ddL(b)d=L'(a)L'(b)\rforal a, b\in {\cal G}.\\
\eneq
Put $\dt'=(M+1)\dt+2M\dt_0>0.$ 
Then $L'$ is ${\cal G}$-$\dt'$-multiplicative and 
$L'(a)\approx_{\dt'} L(a)$ for all $a\in {\cal G}.$ 
In other words, we may consider only those \cpc s $L$ such that 
$L(1_A)$ is a projection.   This will be a convention of this paper. 
\end{df}

The following is a special case of \cite[Lemma 4.1.5]{Linbook2} and \cite[Theorem 3.14]{EGLN00}.

\begin{thm}[cf. Lemma 4.1.5 of \cite{Linbook2}, Theorem 3.14 of \cite{EGLN00} and Theorem 5.9 of \cite{LinAUCT}]\label{TAUCT}
Let $A$ be a separable amenable \CA\, satisfying the UCT,  and 
$r: \N\to \Z_+,$ $l: \N\times \N \to \N,$ $s\ge 1$ and $L\ge 2\pi.$ 
Then, for any finite subset ${\cal F}\subset A,$ $\ep>0$ and 
$T: A_+\setminus \{0\}\to (\N_+, \R_+),$ $a\mapsto (N(a), K(a)),$ there exists a finite subset ${\cal G}_1\subset A,$  a finite subset ${\cal G}_2\subset A_+\setminus \{0\},$ a positive number $\dt>0,$ a finite subset ${\cal P}\subset \underline{K}(A)$ and 
an integer $k\in \N$ (they do not depend on $T$ but 
on $A,$ $\ep$ and ${\cal F}$) such that, for any unital  \CA\, $B\subset {\bf A}_{(r, l, s, L)}$ and 
any ${\cal G}$-$\dt$-multiplicative \cpc s $\phi, \psi: A\to B$ and 
any 
${\cal G}$-$\dt$-multiplicative \cpc\, $\sigma: A\to B$ such that 
$\sigma$ is ${\cal G}_2$-full, 
if
\beq\label{KKcond}
{[}\phi{]}|_{\cal P}={[}\psi{]}|_{\cal P},
\eneq
and, if both $\phi(1_A)$ and $\psi(1_A)$ are invertible , or both are non-invertible, 
in the case that $A$ is unital,  
then there exists a unitary $U\in M_{k+1}(B)$ such that
\beq\label{TAUCT-2}
\|U^*(\diag(\phi(a), \overbrace{\sigma(a), \sigma(a),...,\sigma(a)}^k)U-\diag(\psi(a), 
\overbrace{\sigma(a), \sigma(a),...,\sigma(a)}^k)\|<\ep
\eneq
for all $a\in {\cal F}.$
\end{thm}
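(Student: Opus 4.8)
The statement is a ``stable uniqueness'' theorem of the Dadarlat--Eilers / Lin type, and the natural approach is to deduce it from the already-cited results: Lemma 4.1.5 of \cite{Linbook2}, Theorem 3.14 of \cite{EGLN00}, and Theorem 5.9 of \cite{LinAUCT}. The plan is to first reduce to the case where $\phi(1_A)$ and $\psi(1_A)$ are projections (using the convention established in Definition \ref{Dunital}, which costs only a small perturbation of the maps and an adjustment of $\dt$ and ${\cal G}$), and, when $A$ is unital, to split into the two stated cases according to whether these projections are invertible (i.e.\ equal to $1_B$) or not. In the non-invertible unital case and in the non-unital case, the target corner $pBp$ (where $p=\phi(1_A)\vee\psi(1_A)$ or a suitable unitarily equivalent enlargement) still lies in the regularity class ${\bf A}_{(r,l,s,L)}$ after adjusting the parameters by a bounded amount, since these regularity conditions pass to matrix amplifications and to full corners controlled by $s$ and $L$; this is where I would invoke the bookkeeping underlying \cite{GL-KT}.

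The core step is the uniqueness estimate itself. Given the $KK$-type agreement $[\phi]|_{\cal P}=[\psi]|_{\cal P}$ on the finite subset ${\cal P}\subset\underline{K}(A)$, and given that $\sigma$ is ${\cal G}_2$-full (so that $\sigma$, and hence sufficiently many direct sums of copies of $\sigma$, dominates in the Cuntz sense any finite piece of data coming from $\phi$ and $\psi$), one runs the standard approximate-intertwining/Basic Homotopy Lemma machinery: using the fullness of $\sigma$ one constructs, for each $a$ in a slightly larger finite set, partial isometries implementing an approximate Murray--von Neumann--type equivalence between the ``defect'' projections of $\diag(\phi,\sigma^{(k)})$ and $\diag(\psi,\sigma^{(k)})$, then uses conditions (c), (d) and (f$'$) of the regularity class (surjectivity of $U(\cdot)/U_0(\cdot)\to K_1$, the $K_0$-divisibility bound $l$, and the exponential-length bound $L$) to lift these to a genuine unitary $U\in M_{k+1}(B)$ and to control the length of the connecting path, so that conjugation by $U$ moves $\diag(\phi(a),\sigma(a),\dots,\sigma(a))$ to within $\ep$ of $\diag(\psi(a),\sigma(a),\dots,\sigma(a))$ for all $a\in{\cal F}$. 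The integer $k$, the finite sets ${\cal G}_1,{\cal G}_2$, $\dt$, and ${\cal P}$ are extracted from the quantitative version of this argument (Theorem 5.9 of \cite{LinAUCT}) applied to the universal coefficient data of $A$; they depend only on $A$, $\ep$, ${\cal F}$ and on the fixed regularity parameters $r,l,s,L$, not on $T$ — indeed $T$ enters only through the requirement that $\sigma$ be ${\cal G}_2$-full, and the conclusion is uniform over all such $\sigma$.

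\textbf{Main obstacle.} The genuinely delicate point is the unital non-invertible case (and, relatedly, the non-unital case): one must ensure that after cutting down to the corner determined by the projections $\phi(1_A)$, $\psi(1_A)$ one still works inside a \CA\ lying in the regularity class with \emph{uniformly} controlled parameters, so that the cited stable-uniqueness theorems apply verbatim. This requires knowing that the relevant defect projections become equivalent after adding $r_0$-many copies of the unit (condition (a)), that the $K_1$-surjectivity and exponential-length bounds survive passage to corners and to $M_{k+1}$, and that the ``both invertible or both non-invertible'' hypothesis is exactly what is needed to match $[\phi(1_A)]$ with $[\psi(1_A)]$ in $K_0$ once the rest of the $\underline{K}$-data agrees. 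Handling these index-theoretic compatibilities carefully — rather than the homotopy estimates, which are by now routine — is the heart of the proof, and it is precisely why the statement is phrased as a consequence of the three cited theorems rather than proved from scratch.
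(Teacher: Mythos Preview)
Your plan is in the right spirit --- the theorem is indeed meant to be read off from the cited results --- but you are overcomplicating the argument and, in doing so, have invented a spurious obstacle. The paper's proof is two sentences: the unital case is Lemma 4.1.5 of \cite{Linbook2}, the non-unital case is Theorem 3.14 of \cite{EGLN00}, and the only observation one adds is that condition (f$'$) of the class ${\bf A}_{(r,l,s,L)}$ forces ${\rm cel}(\la\phi(u)\ra\la\psi(u^*)\ra)\le 2L$ automatically for any unitary $u\in M_N(\tilde A)$ (once ${\cal G}$ is large enough and $\dt$ small enough), so that no separate exponential-length map ${\bf L}$ on unitaries has to be imposed as part of the hypotheses. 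That is precisely why the class ${\bf A}_{(r,l,s,L)}$ with (f$'$) is used here rather than ${\bf C}_{(r_0,r_1,l,s,R)}$ with the weaker (f); compare the remark immediately following the theorem.

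Your ``main obstacle'' --- cutting down to a corner $pBp$ and verifying it stays in the regularity class with uniformly controlled parameters --- is neither needed nor easy to justify: the conditions in Definition \ref{Dregularity} are stated for $M_m(\tilde B)$ over all $m$, and there is no general reason a proper corner inherits them with the same constants. The cited theorems apply directly to maps into $B$; the invertible/non-invertible hypothesis in the unital case is already part of the input to those results (it is what aligns $[\phi(1_A)]$ with $[\psi(1_A)]$ so the amplifications can be compared inside $M_{k+1}(B)$), and nothing beyond that needs to be checked. Drop the corner passage and the sketch of the Basic-Homotopy-Lemma machinery, and simply invoke the references together with the ${\rm cel}$ observation above.
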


\begin{proof}
The  case that $A$ is unital follows from \cite[Lemma 4.15]{Linbook2} and 
the non-unital case follows from \cite[Theorem 3.14]{EGLN00}.
Note that, if $u\in M_N(\tilde A)$ is a unitary, 
then, by (f'), 
\beq
{\rm cel}(\la \phi(u)\ra \la \psi(u^*)\ra)\le 2L
\eneq
(whenever ${\cal G}$ is sufficiently large and $\dt$ is small). 
%
\end{proof}

\begin{rem}
If $B$ is in ${\bf C}_{(r_0,r_1,l ,s,R)},$  then the statement has to be altered a little bit.
After `` $T$," we will add a map ``${\bf L}: U(M_\infty(\tilde A))\to \R_+$".
Then, after  ``a finite subset ${\cal P}\subset \underline{K}(A)$", we will add
``a finite subset ${\cal U}\subset U(M_\infty(\tilde A))"$ and, then, together \eqref{KKcond},
we require  `` ${\rm cel}(\la \phi(u)\ra \la \psi(u^*)\ra)\le 2 {\bf L}$
for all $u\in {\cal U}$" .

\end{rem}

\begin{lem}\label{Luniq}
Let $A$ be a    separable  \CA\, in ${\cal N}.$ 
 For any $\ep>0,$  any   finite subset ${\cal F}\subset A$  and 
$0<\lambda\le 1,$ 
  there exists $\dt>0,$  finite subsets ${\cal G}_1\subset A,$ 
  ${\cal G}_2\subset A_+\setminus \{0\}$  and an integer $k\in \N$ satisfying the following:
  For any \cpc\, $L: A\to B(H),$ where $H$ is an infinite dimensional separable Hilbert space,
  such that  
  \beq
  \|L(g_1g_2)-L(g_1)L(g_2)\|<\dt\tforal g_1, g_2\in {\cal G}_1,
  \eneq
 there exists
 a unitary $u\in M_{k+1}(B(H))$ 
 such that
 \beq
 \|L(f)\oplus \diag(\overbrace{h_0(f), h_0(f), ...,h_0(f)}^k)-u^*(h(f)\oplus \overbrace{h_0(f), h_0(f), ...,h_0(f)}^k)u\|<\ep
 \eneq
 for all $f\in {\cal F},$
 any ${\cal G}_1$-$\dt$-multiplicative   \cpc\, 
$h_0: A\to B(H)$  such that $\|\pi\circ h_0(g)\|\ge \lambda\|g\|$
for all $g\in {\cal G}_2,$  
and any \hm\, $h: A\to B(H)$ (we also assume that, in the case that $A$ is unital,
if $L(1_A)$ is invertible, $h$ is unital, or $L(1_A)$ is not invertible, $h$ is not unital).
\end{lem}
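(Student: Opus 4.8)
The plan is to derive this uniqueness statement from Theorem~\ref{TAUCT} together with the Choi--Effros lifting proposition and the fullness result in Proposition~\ref{Pfull}, via a standard ultrafilter/compactness argument. First I would observe that the hypothesis on $L$ gives, through Proposition~\ref{PC-E}, that after shrinking $\dt$ and enlarging ${\cal G}_1$ we may replace $L$ by a genuine ${\cal G}_1$-$\dt$-multiplicative \cpc\ agreeing with $L$ to within $\ep/4$ on ${\cal F}$; applying the convention of Definition~\ref{Dunital} we may further assume $L(1_A)$ is a projection when $A$ is unital. The key point is that $B(H)$ lies in the regularity class ${\bf A}_{(0,1,1,2\pi)}$ by Remark~\ref{Rregular}, so Theorem~\ref{TAUCT} applies with $r=0$, $l$ trivial, $s=1$, $L=2\pi$. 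Feeding in $\ep$ and ${\cal F}$, that theorem produces finite subsets ${\cal G}_1\subset A$, ${\cal G}_2\subset A_+\setminus\{0\}$, a number $\dt>0$, a finite subset ${\cal P}\subset\underline{K}(A)$ and an integer $k\in\N$; these will be the data I hand back in the lemma.

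Next I would check the two algebraic hypotheses of Theorem~\ref{TAUCT} for the pair $(L,h)$ with $\sigma=h_0$. The $K$-theoretic condition \eqref{KKcond}, namely $[L]|_{\cal P}=[h]|_{\cal P}$, holds because $h$ is a genuine homomorphism into $B(H)$ and $K_i(B(H))=0$ for $i=0,1$, so \emph{every} ${\cal G}_1$-$\dt$-multiplicative \cpc\ into $B(H)$ induces the zero map on $\underline{K}(A)$ (here $\underline{K}(B(H))=0$ as well); hence both sides of \eqref{KKcond} vanish. The parenthetical invertibility clause is exactly matched by the parenthetical assumption in the lemma about $h$ being unital iff $L(1_A)$ is invertible. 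The fullness requirement ``$\sigma$ is ${\cal G}_2$-full'' is supplied by Proposition~\ref{Pfull}: the hypothesis $\|\pi\circ h_0(g)\|\ge\lambda\|g\|$ for all $g\in{\cal G}_2$ shows $h_0$ is $(N,K)$-${\cal G}_2$-full with $N\equiv 1$, $K\equiv\sqrt{r/\lambda}$ for any fixed $r>1$; I would build $T=(N,K)$ from this $\lambda$ before invoking Theorem~\ref{TAUCT}, so that the finite set ${\cal G}_2$ it returns can be taken to be the set on which fullness of $h_0$ is imposed (one arranges this by taking the ${\cal G}_2$ from \ref{TAUCT} and noting fullness on it follows from the stated spectral lower bound). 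With all three hypotheses verified, Theorem~\ref{TAUCT} yields a unitary $U\in M_{k+1}(B(H))$ implementing \eqref{TAUCT-2} to within $\ep/2$; combining with the $\ep/4$ replacement of $L$ and a final adjustment of constants gives the desired estimate with the homomorphism $h_0$ playing the role of the repeated summand.

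The main obstacle I anticipate is bookkeeping around the non-unital case and the ordering of quantifiers: Theorem~\ref{TAUCT} must produce ${\cal G}_1$, ${\cal G}_2$, $\dt$, $k$ \emph{before} $h_0$, $h$, or $L$ are presented, yet the fullness constant $K(a)=\sqrt{r/\lambda}$ depends on $\lambda$, which is given. This is fine because $\lambda$ is part of the input to the lemma, so I can fix $T$ in terms of $\lambda$ at the outset and only then call \ref{TAUCT}; but one must be careful that the ${\cal G}_2$ emitted by \ref{TAUCT} (on which $h_0$ is required to be $T$-full) is the \emph{same} finite set on which the lemma hypothesizes the spectral bound --- so I would state the lemma's ${\cal G}_2$ to be precisely that set, and observe via Proposition~\ref{Pfull} that the norm condition $\|\pi\circ h_0(g)\|\ge\lambda\|g\|$ on it is exactly what is needed. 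A secondary technical point is that $h_0$ is only assumed ${\cal G}_1$-$\dt$-multiplicative, not a homomorphism, so Proposition~\ref{Pfull} should be applied not literally to $h_0$ but after first perturbing it (via Proposition~\ref{PC-E}) to a nearby \cpc; since $\pi\circ h_0$ is already close to multiplicative and $\|\pi\circ h_0(g)\|\ge\lambda\|g\|$, the spectral-theorem argument in the proof of Proposition~\ref{Pfull} goes through with $\lambda$ replaced by, say, $\lambda/2$, which only changes $K$ by a constant factor and is harmless.
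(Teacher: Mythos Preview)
Your approach is correct and essentially identical to the paper's: fix $T$ from $\lambda$, invoke Theorem~\ref{TAUCT} with $B(H)\in{\bf A}_{(0,1,1,2\pi)}$, observe that both $[L]|_{\cal P}$ and $[h]|_{\cal P}$ vanish since $K_*(B(H))=0$, and use Proposition~\ref{Pfull} to get $T$-${\cal G}_2$-fullness of $h_0$. The Choi--Effros perturbations you propose for $L$ and for $h_0$ are unnecessary: $L$ is already assumed to be a \cpc\ in the lemma statement, and Proposition~\ref{Pfull} requires no multiplicativity whatsoever of the positive linear map---the spectral lower bound $\|\pi\circ h_0(g)\|\ge\lambda\|g\|$ alone yields the uniform fullness you need.
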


\begin{proof}
We will apply Theorem \ref{TAUCT}.

We choose $r=0,$ $l=1,$ $s=1$ and $L=2\pi.$   Then $B(H)\subset {\bf A}_{(0,1,1,2\pi)}$
(see \ref{Rregular}). 

Define $T: A\to (\N_+, \R_+),$ $a\mapsto (1, (2/\lambda) \|a\|).$ 
Let ${\cal G},$ $\dt>0$ and ${\cal P}\subset \underline{K}(A)$ and $k\in \N$  be given  by \ref{TAUCT}
for ${\cal F}$, $\ep,$ 
and $T$ (as well as $r=0,$ $l=1,$ $s=1$ and $L=2\pi$). 

Since $K_i(B(H))=\{0\},$ $i=0,1,$ 
for any ${\cal G}$-$\dt$-multiplicative, $[L]|_{\cal P}=0$ and 
$[h]|_{\cal P}=0.$ 

 Suppose that $h_0: A\to B(H)$ is a ${\cal G}_1$-$\dt$-multiplicative \cpc\, such that
 $\|\pi\circ h_0(g)\|\ge \lambda\|g\|$ for all $g\in {\cal G}_2.$
 Then, by Proposition \ref{Pfull}, $h_0$ is also $T$-${\cal G}_2$-full.
 Hence Theorem \ref{TAUCT} applies.
 %

\end{proof}

\begin{lem}\label{Luniqh}
Let $A$ be a separable  \CA\, in ${\cal N}.$ 
 For any $\ep>0$ and any finite subset ${\cal F}\subset A,$  
  there exists $\dt>0$ and  a finite subset ${\cal G}\subset A$ 
  satisfying the following:
  For any \cpc\, $L: A\to B(H),$ where $H$ is an infinite dimensional separable Hilbert space,
  such that  
  \beq
  \|L(g_1g_2)-L(g_1)L(g_2)\|<\dt\tforal g_1, g_2\in {\cal G},
  \eneq
 there exists
 a unitary $u\in M_2(B(H))$ 
 such that
 \beq
 \|L(f)\oplus h_0(f)
 -u^*(h(f)\oplus 
  h_0(f))u\|<\ep
 \eneq
 for all $f\in {\cal F},$ and for 
any full \hm\, 
$h_0: A\to B(H)$  
and any \hm\, $h: A\to B(H)$ (we also assume that, in the case that $A$ is unital,
if $L(1_A)$ is invertible, $h$ is unital, or $L(1_A)$ is not invertible, $h$ is not unital).
\end{lem}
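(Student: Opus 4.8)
The plan is to derive Lemma~\ref{Luniqh} from Lemma~\ref{Luniq} by absorbing the ``$k$-fold ampliation'' of a fixed full representation into a single copy of a full representation, using the fact that in $B(H)$ a countable direct sum of copies of a full representation is again a full representation and that any two faithful nondegenerate representations of a separable $C^*$-algebra on a separable Hilbert space are approximately unitarily equivalent (after suitable normalization of units). Concretely, fix $\ep>0$ and a finite subset ${\cal F}\subset A$. Apply Lemma~\ref{Luniq} with $\lambda=1$ (any value works) to obtain $\dt_1>0$, finite subsets ${\cal G}_1\subset A$, ${\cal G}_2\subset A_+\setminus\{0\}$ and an integer $k\in\N$ for $\ep/2$ and ${\cal F}$. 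Set ${\cal G}:={\cal G}_1$ and $\dt:=\dt_1$.

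Next I would deal with the given data. Let $L:A\to B(H)$ be ${\cal G}$-$\dt$-multiplicative and let $h_0:A\to B(H)$ be a full \hm\ and $h:A\to B(H)$ an arbitrary \hm\ (with the unital/non-unital compatibility with $L(1_A)$ in the unital case). The key observation is that $h_0$, being a full \hm, is in particular faithful, and hence $\|\pi\circ h_0(g)\|\ge\|g\|\ge\lambda\|g\|$ for all $g\in{\cal G}_2$ provided $h_0(A)\cap{\cal K}=\{0\}$; this last point follows because a full \hm\ into $B(H)$ cannot have image meeting ${\cal K}$ nontrivially unless $A$ itself is elementary, and even then one can arrange infinite multiplicity---more robustly, one can first replace $h_0$ by $h_0^{(\infty)}:=h_0\otimes 1$ on $H\otimes\ell^2$, which is still full and now faithful on the Calkin algebra, and note $h_0^{(\infty)}$ is approximately unitarily equivalent (indeed unitarily equivalent after identifying Hilbert spaces) to $h_0$ is \emph{not} true in general, so instead I will run the argument with $h_0$ directly, using that fullness of $h_0$ already gives $\|\pi\circ h_0(g)\|\ge\sigma\|g\|$ for the relevant $\sigma$ because $h_0(g)$ being full in $B(H)$ forces $h_0(g)\notin{\cal K}$. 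So Lemma~\ref{Luniq} applies to $L$, $h_0$, and $h$, yielding a unitary $v\in M_{k+1}(B(H))$ with
\beq
L(f)\oplus\diag(\overbrace{h_0(f),\dots,h_0(f)}^k)\approx_{\ep/2}v^*\big(h(f)\oplus\diag(\overbrace{h_0(f),\dots,h_0(f)}^k)\big)v
\eneq
for all $f\in{\cal F}$.

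Then I would absorb the $k$ extra copies. Since $h_0$ is a full (hence faithful nondegenerate, after the standard reduction making $h_0(1_A)$ or the relevant unit behave) representation on the separable Hilbert space $H$, the representation $\diag(h_0,\dots,h_0)$ ($k$ copies) on $H^{\oplus k}$ is unitarily equivalent to $h_0$ itself via some unitary $W:H\to H^{\oplus k}$ (for $A$ non-unital one uses that $h_0$ and $h_0^{\oplus k}$ are both faithful nondegenerate on separable spaces and hence, by the amenability and the fact that any two such are approximately unitarily equivalent and one can upgrade to exact equivalence for the ampliation, $h_0^{\oplus k}\cong h_0$ up to conjugation by a unitary; alternatively one absorbs these copies into a preliminary infinite-multiplicity choice as in Lemma~\ref{Luniq}'s proof). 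Conjugating by $1_{B(H)}\oplus W$ on both sides converts the displayed estimate into
\beq
L(f)\oplus h_0(f)\approx_{\ep/2}u^*\big(h(f)\oplus h_0(f)\big)u\tforal f\in{\cal F},
\eneq
where $u\in M_2(B(H))$ is the conjugate of $v$ by $1\oplus W\oplus(\text{reshuffle})$, absorbing the $\ep/2$ slack.

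The main obstacle I anticipate is the step identifying $h_0^{\oplus k}$ with $h_0$ up to unitary conjugation and, relatedly, verifying that a full \hm\ $h_0:A\to B(H)$ automatically satisfies $h_0(g)\notin{\cal K}$ for all $g\in{\cal G}_2$ (so that the fullness hypothesis of Lemma~\ref{Luniq} is met with $\lambda=1$): fullness of $h_0(g)$ in $B(H)$ means $h_0(g)$ generates $B(H)$ as an ideal, which indeed excludes $h_0(g)$ being compact since ${\cal K}$ is a proper ideal, so $\|\pi(h_0(g))\|>0$; making this uniform over the finite set ${\cal G}_2$ and bounded below by a fixed $\lambda$ is immediate since ${\cal G}_2$ is finite. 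The genuinely delicate point is the non-unital bookkeeping for the unital/invertibility compatibility of $h_0^{\oplus k}$ versus $h_0$, but since $h_0$ is a fixed faithful \hm\ one can always arrange $h_0$ to have infinite multiplicity from the outset, so that $h_0^{\oplus k}\cong h_0$ unitarily; I would state this as a preliminary normalization, exactly parallel to the convention in Definition~\ref{Dunital}, and then the rest is routine.
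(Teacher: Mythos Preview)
Your overall strategy---apply Lemma~\ref{Luniq} to get the $(k+1)$-fold statement and then absorb the extra $k$ copies of $h_0$ back into a single copy---is exactly what the paper does. However, the absorption step as you wrote it has a genuine gap.

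You claim that $h_0^{\oplus k}$ is \emph{unitarily} equivalent to $h_0$ via some $W:H\to H^{\oplus k}$, and you try to justify this either by ``upgrading'' approximate unitary equivalence to exact equivalence or by ``arranging $h_0$ to have infinite multiplicity from the outset.'' Neither works. Exact unitary equivalence of $h_0$ and $h_0^{\oplus k}$ fails whenever $h_0$ has an irreducible subrepresentation occurring with finite multiplicity, and you cannot modify $h_0$: the lemma asserts the conclusion for \emph{every} full \hm\ $h_0$, so $h_0$ is given, not chosen.

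The fix is that exact equivalence is unnecessary. Since $h_0$ is full, $h_0(a)$ generates $B(H)$ as an ideal for every $a\in A_+\setminus\{0\}$, so $h_0(a)\notin{\cal K}$; hence $\pi\circ h_0$ is an injective $*$-\hm\ and therefore isometric, i.e.\ $\|\pi\circ h_0(a)\|=\|a\|$ for all $a$. (This also gives the hypothesis of Lemma~\ref{Luniq} with $\lambda=1$ directly, without the detour through elementary algebras.) Now pick an isometry $w_0$ with $w_0^*w_0=1_{B(H)}$ and $w_0w_0^*=1_{M_k(B(H))}$, set $H_0(a)=w_0^*\diag(h_0(a),\dots,h_0(a))w_0\in B(H)$, and observe that $H_0$ is again a faithful representation with $H_0(A)\cap{\cal K}=\{0\}$. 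By Voiculescu's Weyl--von Neumann theorem there is a unitary $w\in B(H)$ with $w^*H_0(f)w\approx_{\ep/3}h_0(f)$ for all $f\in{\cal F}$. Conjugating both sides of the Lemma~\ref{Luniq} estimate (taken with tolerance $\ep/3$) by the partial isometry $1_{B(H)}\oplus w_0w$ then yields
\[
L(f)\oplus h_0(f)\approx_{\ep}u^*\big(h(f)\oplus h_0(f)\big)u\quad(f\in{\cal F})
\]
with $u=(1\oplus w_0w)^*v(1\oplus w_0w)\in M_2(B(H))$ a genuine unitary. This is precisely the paper's proof; the only change from your outline is that the unitary $W$ you sought is replaced by an \emph{approximate} intertwiner $w$, which suffices because you only need a norm estimate on the finite set ${\cal F}$.
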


\begin{proof}
Since $h_0$ is a full \hm, $\pi\circ h$ is also injective.  In particular, $\|\pi\circ h_0(a)\|=\|a\|$ for all $a\in A.$
Hence, by Proposition  \ref{Pfull}, $h_0$ is $(1, \|a\|)$-${\cal G}$-full.

Applying  Lemma \ref{Luniq},
we obtain an integer $k\in \N$ and a unitary $v\in M_{k+1}(B(H))$ such that 
\beq
L\oplus \overbrace{h_{0}\oplus h_{0}\oplus ...,\oplus h_{0}}^k\approx_{\ep/3} 
v^*(h\oplus \overbrace{h_{0}\oplus h_{0}\oplus ...\oplus h_{0}}^k)v \,\,\,{\rm on}\,\,\, {\cal F}.
\eneq
There is an isometry 
$w_0\in M_{k+1}(B(H))$ such that
\beq\label{xMT-1-0} 
w_0^*w_0=1\andeqn w_0w_0^*=1_{M_{k+1}(B(H))}.
\eneq 
Define $H_0: A\to B(H)$ by 
\beq\label{xMT-1-2}
H_0(a)=w_0^*\diag(\overbrace{h_0(a), h_0(a),...,h_0(a))}^{k})w_0\rforal a\in A.
\eneq

By Voiculescu's Weyl-von Neumann Theorem (\cite{DV1}), there is a unitary $w\in B(H)$ such that  
\beq
w^* H_0(a)w
\approx_{\ep/3} 
h_{0}(a)\tforal a\in {\cal F}.
\eneq
Put $w_1=1_{B(H)}\oplus w_0w.$ 
 Then 
 \beq
L(a)\oplus h_0(a)&\approx_{\ep/3}& 
 w_1^*(L(a)\oplus \overbrace{h_{0}(a)\oplus h_{0}(a)\oplus ...,\oplus h_{0}(a)}^k)w_1\\
&\approx_{\ep/3}& w_1^*v^*(h(a)\oplus \overbrace{h_{0}(a)\oplus h_{0}(a)\oplus ...\oplus h_{0}(a)}^k)vw_1\\
&\approx_{\ep/3} &w_1^*v^*w_1(h(a)\oplus h_0(a))w_1^*vw_1.
 \eneq
Put $u=w_1^*vw_1.$

\end{proof}

The following is the first result of this paper about almost multiplicative maps.

\begin{thm}\label{MT-1+}
Let $A$ be a separable amenable \CA\, satisfying the UCT.
For any $\ep>0,$ any finite subset ${\cal F}\subset A,$ and $0<\lambda\le 1,$ 
there exist  $\dt>0$ and a finite subset ${\cal G}\subset A$ satisfying the following:

For any contractive positive linear map $L: A\to B(H),$ where $H$ is an  infinite dimensional separable Hilbert space, 
which is ${\cal G}$-$\dt$-multiplicative, i.e.,
$
\|L(a)L(b)-L(ab)\|<\dt
$
for all $a, b\in {\cal G},$  such that
\beq
\|L(a)\|\ge \lambda \|a\| \tforal a \in {\cal G}
\eneq
and there is a separable \SCA\, $C\subset B(H)$
such that $L({\cal G})\subset C$ and $C\cap {\cal K}=\{0\},$  then
there is a \hm\,  $h: A\to B(H)$
such that
\beq
\|L(a)-h(a)\|<\ep\tforal a\in {\cal F}.
\eneq
\end{thm}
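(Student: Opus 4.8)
The plan is to combine the two stable uniqueness results just established, Lemma~\ref{Luniq} and Lemma~\ref{Luniqh}, with Voiculescu's theorem applied to the separable \CA\, generated by $L({\cal G})$ --- the hypothesis $C\cap{\cal K}=\{0\}$ being precisely what legitimizes that last ingredient. First I would carry out the routine reductions: using Proposition~\ref{PC-E} I may enlarge ${\cal G}$ and shrink $\dt$ so as to replace $L$ by a \cpc\, agreeing with it to within a small tolerance on ${\cal G}\cup{\cal F},$ and, following Definition~\ref{Dunital}, assume $L$ is a \cpc\, with $L(1_A)$ a projection in the unital case; I also arrange ${\cal F}\subseteq{\cal G}.$ Since $\pi$ is isometric on $C,$ one has $\|\pi\circ L(g)\|=\|L(g)\|\ge\lambda\|g\|$ for $g\in{\cal G},$ so by Proposition~\ref{Pfull} the map $L$ itself qualifies as an admissible auxiliary map ``$h_0$'' in Lemma~\ref{Luniq}. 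I then fix a faithful, essential and full \hm\, $\rho\colon A\to B(H)$ (say the infinite ampliation of a faithful representation), unital iff $L(1_A)=1_{B(H)},$ so that all ampliations $\rho^{(m)}$ are again faithful and essential and hence, by Voiculescu's theorem, approximately unitarily equivalent to $\rho$ on any finite set to any prescribed tolerance. Finally set $C'=C^*(L({\cal G}))\subseteq C;$ then $C'\cap{\cal K}=\{0\},$ so (after deleting the subspace on which $C'$ acts degenerately, which does not affect ${\cal F}$) the inclusion $C'\subseteq B(H)$ and all its diagonal ampliations $\iota_m\colon C'\to M_m(B(H))$ are faithful essential representations of $C',$ and are therefore mutually approximately unitarily equivalent.

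For the main argument I fix a small $\eta>0$ and proceed in three moves, writing $L^{(m)}$ for the ampliation $a\mapsto\diag(L(a),\dots,L(a))$ and reading ``$\approx^u$ on ${\cal F}$'' as approximate unitary equivalence on ${\cal F}$ (which absorbs the harmless identifications of $H^{(m)}$ with $H$). Move one: Lemma~\ref{Luniq}, applied with tolerance $\eta,$ auxiliary map $h_0:=L$ and target \hm\, $h:=\rho,$ yields $k\in\N$ with $L^{(k+1)}\approx^u_\eta \rho\oplus L^{(k)}$ on ${\cal F}.$ Move two: since $L(f)\in C'$ for $f\in{\cal F}\subseteq{\cal G},$ we have $L^{(m)}(f)=\iota_m(L(f)),$ and as $\iota_{k+1}$ and $\iota_1=\mathrm{id}$ are approximately unitarily equivalent representations of $C'$ we get $L^{(k+1)}\approx^u_\eta L$ on ${\cal F}.$ Move three: applying Lemma~\ref{Luniqh} to the \cpc\, $L^{(k)}$ (whose multiplicative defect on ${\cal G}$ equals that of $L$) with auxiliary and target homomorphism both a faithful essential full \hm, and then absorbing the stabilizing summand via Voiculescu's theorem, gives $\rho\oplus L^{(k)}\approx^u_\eta \rho$ on ${\cal F}.$ Chaining the three moves, $L$ is approximately unitarily equivalent on ${\cal F}$ to $\rho$; that is, for $\eta$ chosen small enough there is a unitary $W\in U(B(H))$ with $\|W^*\rho(f)W-L(f)\|<\ep$ for all $f\in{\cal F},$ and $h:=W^*\rho(\cdot)W$ is the desired \hm. (One checks along the way that the unital/non-unital matching clauses in Lemmas~\ref{Luniq} and \ref{Luniqh} hold because of the choice of $\rho.$)

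The step I expect to be the main obstacle --- and the one place the hypothesis $C\cap{\cal K}=\{0\}$ is genuinely needed --- is turning the output of Lemma~\ref{Luniq} into something usable: that lemma only gives $L^{(k+1)}\approx^u\rho\oplus L^{(k)},$ and one cannot simply discard the $k$ extra copies of $L$ since $L$ is not a \hm. The resolution above leans on $C'=C^*(L({\cal G}))$ missing ${\cal K}$ twice over: it is what lets $L$ itself be used as the auxiliary map $h_0$ (because then $\|\pi\circ L(g)\|=\|L(g)\|$), and it is what makes the ampliations $\iota_m$ of $C'$ approximately unitarily equivalent, so the surplus copies can be shuffled away. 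Were $C'$ to meet ${\cal K},$ the $\iota_m$ would carry different Fredholm-index information and the scheme would fail --- exactly the obstruction exhibited by the example of Section~8. The rest --- matching the finitely many tolerances, bridging between $L$ and its Choi--Effros \cpc\, approximant (one uses $C$ itself in the Voiculescu step to absorb the approximation error), and the unital/degenerate reductions --- I expect to be routine.
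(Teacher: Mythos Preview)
The proposal is correct and takes essentially the same approach as the paper. Both proofs execute the same three-move strategy: (i) Voiculescu on the separable algebra containing $L({\cal G})$ to identify $L$ with $L^{(k+1)}$; (ii) Lemma~\ref{Luniq} with $h_0=L$ (legitimate because $C\cap{\cal K}=\{0\}$ forces $\|\pi\circ L(g)\|=\|L(g)\|\ge\lambda\|g\|$) to replace one summand by a full homomorphism $\rho$; (iii) Lemma~\ref{Luniqh} plus Voiculescu on $\rho$ to absorb the remaining $L^{(k)}$. The only cosmetic differences are that the paper builds $\rho$ explicitly via the Kirchberg--Phillips embedding $A\hookrightarrow O_2\hookrightarrow B(H)$ (your generic faithful essential full representation works just as well), and the paper tracks the isometry bookkeeping between $H$ and $H^{(m)}$ explicitly where you bundle it into ``harmless identifications.''
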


\begin{proof}
Fix $\ep>0,$ a finite subset ${\cal F}\subset A$  and $0<\lambda \le 1.$

Let $\dt_1>0$ (in place of $\dt$), ${\cal G}_1\subset A$ and ${\cal G}_2\subset A_+\setminus \{0\}$ and 
$k\in \N$ be integers given by Lemma \ref{Luniq} associated with $\ep/4,$
${\cal F}$ and $\lambda/2.$ We also assume that  $\dt_1$ (in place of $\dt$) and ${\cal G}_1$
(in placed of ${\cal G}$) work for Lemma \ref{Luniqh}. 

Put ${\cal G}_3={\cal G}_1\cup {\cal G}_2\cup {\cal F}.$ 
Let $\dt_2>0$ (in place of $\dt$) and ${\cal G}_4$ 
be required by Proposition \ref{PC-E} for 
$\min\{\ep, \dt_1\}$ (in place of $\ep$) and ${\cal G}_3$ (in place of ${\cal F}$).
Choose $\dt=\min\{\ep, \dt_1, \dt_2\}>0$ and 
${\cal G}={\cal G}_4\cup {\cal G}_3.$

Now suppose that $L: A\to B(H)$ satisfies the assumption 
for the above mentioned $\dt$ and ${\cal G}.$ By applying Proposition \ref{PC-E},    we may assume, \wilog, that 
$L$ is a \cpc.

There is an isometry 
$w\in M_{k+1}(B(H))$ such that
\beq\label{MT-1-0} 
w^*w=1\andeqn ww^*=1_{M_{k+1}(B(H))}.
\eneq 
Let $j: C\to B(H)$ be the embedding.
Define $j_{k+1}: C\to B(H)$ by 
\beq\label{MT-1-2}
j_{k+1}(c)=w^*\diag(\overbrace{j(c), j(c),...,j(c)}^{k+1})w\rforal c\in C.
\eneq
Choose ${\cal G}_5=\{L(a): a\in {\cal G}\}.$ 
By Voiculescu's Weyl-von Neumann Theorem (\cite{DV1}), 
there is a unitary $v\in B(H)$ such that
\beq\label{MT-1-3}
\|v^*j_{k+1}(c)v-j(c)\|<\ep/4\rforal c\in {\cal G}_5.
\eneq

Since $A$ is a separable amenable \CA, there is an embedding from $j_o: A\to O_2$
(see \cite{KP}).  Let $\iota_O: O_2\to B(H)$ be a unital embedding.
Define $\rho=\iota_O\circ j_o: A\to B(H).$
Note that $\pi\circ \rho$ is injective.

Note also  since $\pi |_C$ is injective, 
we also have 
\beq
\|\pi\circ j\circ L(a)\|\ge \lambda \|a\|\rforal a\in {\cal G}.
\eneq

If $A$ is unital, we may assume (with sufficiently small $\dt$ and large ${\cal G}$---see
Definition \ref{Dunital})
that $L(1_A)$ is a projection.  Since unital hereditary \SCA\, of $O_2$ is isomorphic to $O_2,$
when $L(1_A)$ is the identity of $B(H),$ we may choose $\rho$ to be unital. 
Otherwise, we may choose $\rho$ non-unital. 

By applying Lemma \ref{Luniq}, we obtain a unitary 
$v_0\in M_{k+1}(B(H))$ such that
\beq\label{MT-1-4}
\|L(a)\oplus {\rm diag}(\overbrace{L(a),L(a),...,L(a)}^k)-
v_0^*(\rho(a)\oplus {\rm diag}(\overbrace{L(a),L(a),...,L(a)}^k))v_0\|<\ep/4
\eneq
for all $a\in {\cal F}.$

There is also an isometry $v_1\in M_k(B(H))$ such that
\beq\label{MT-1-5}
v_1^*v_1=1_{B(H)}\andeqn v_1v_1^*=1_{M_k(B(H))}.
\eneq
Put $v_2=1_{B(H)}\oplus v_1\in M_{k+1}(B(H)).$  
Hence
\beq\label{MT-15+}
v_2^*v_2=1_{M_2(B(H))}\andeqn v_2v_2^*=1_{M_{k+1}(B(H))}.
\eneq
Define  $L_1: A\to B(H)$ by
\beq\label{MT-1-6}
L_1(a)=v_1^*{\rm diag}(\overbrace{L(a),L(a),...,L(a)}^k)v_1\rforal a\in A.
\eneq
Applying Lemma \ref{Luniqh}, we obtain a unitary $v_3\in M_{2}(B(H))$
such that
\beq\label{MT-1-7}
\|v_3^*(\rho(a)\oplus L_1(a))v_3-(\rho(a)\oplus \rho(a))\|<\ep/4
\eneq
for all $a\in {\cal F}.$ 
By Voiculescu's Weyl-von Neumann Theorem again, there is an isometry $v_4\in M_2(B(H))$
such that
\beq\label{MT-1-8}
&&v_4^*v_4=1_{B(H)},\,\, v_4v_4^*=1_{M_2(B(H))}\andeqn\\\label{MT-1-9}
&&\|v_4^*\diag(\rho(a), \rho(a))v_4-\rho(a)\|<\ep/4\rforal a\in {\cal F}.
\eneq

Now, by \eqref{MT-1-3}, \eqref{MT-1-4}, \eqref{MT-1-6},
\eqref{MT-1-7},  \eqref{MT-1-9}, we have
\beq
L(a)&\approx_{\ep/4}& v^*w^*\diag(\overbrace{L(a), L(a),...,L(a)}^{k+1})wv\\
&\approx_{\ep/4}&v^*w^*v_0^*(\rho(a)\oplus \diag(\overbrace{L(a),L(a),...,L(a)}^k)v_0wv\\
&=&v^*w^*v_0^*v_2(\rho(a)\oplus L_1(a))v_2^*v_0wv\\
&\approx_{\ep/4}& v^*w^*v_0^*v_2v_3(\rho(a)\oplus\rho(a))v_3^*v_2^*v_0wv\\\label{MT-1-8+}
&\approx_{\ep/4}& v^*w^*v_0^*v_2v_3v_4\rho(a)v_4^*v_3^*v_2^*v_0wv\,\,\,\,\rforal a\in {\cal F}.
\eneq
Put $z=v_4^*v_3^*v_2^*v_0wv.$ 
Note that $v_3\in M_2(B(H)),$ $v_0\in M_{k+1}(B(H))$ and $v\in B(H)$  are  unitaries.
Then by \eqref{MT-1-8}, \eqref{MT-15+} and \eqref{MT-1-0},
\beq
z^*z&=&v^*w^*v_0^*v_2v_3v_4v_4^*v_3^*v_2^*v_0wv=v^*w^*v_0^*v_2v_31_{M_2(B(H))} v_3^*v_2^*v_0wv\\
&=& v^*w^*v_0^*v_2v_2^*v_0wv=v^*w^*v_0^* 1_{M_{k+1}(B(H))}v_0wv\\
&=& v^*w^*1_{M_{k+1}(B(H))}wv=v^*1v=1.
\eneq
Similarly,
\beq
zz^*&=&v_4^*v_3^*v_2^*v_0wvv^*w^*v_0^*v_2v_3v_4=v_4^*v_3^*v_2^*v_0ww^*v_0^*v_2v_3v_4\\
&=&v_4^*v_3^*v_2^*v_01_{M_{k+1}(B(H))}v_0^*v_2v_3v_4\\
&=&v_4^*v_3^*v_2^*1_{M_{k+1}(B(H))}v_2v_3v_4\\
&=&v_4^*v_3^*1_{M_2(B(H))}v_3v_4\\
&=&v_4^*v_3^*1_{M_2(B(H))}v_3v_4=v_4^*1_{M_2(B(H)}v_4=1.
\eneq
Therefore $z\in B(H)$ is a unitary. Define $h: A\to B(H)$ by
\beq
h(a)=z^*\rho(a)z\rforal a\in A.
\eneq
It follows from \eqref {MT-1-8+} that
\beq
\|L(a)-h(a)\|<\ep\rforal a\in {\cal F}.
\eneq
\end{proof}

{\bf The proof of Theorem \ref{Tpurely}:}
Let $A$ be a separable amenable purely infinite simple \CA\, in ${\cal N}.$
Fix $\ep>0$ and a finite subset ${\cal F}\subset A.$ 

We first assume that $A$ is unital. 

Let $\dt_1$ (as $\dt$) and ${\cal G}_1\subset A$ (as ${\cal G}$) be given 
by Lemma  \ref{Luniqh}  for $\ep/2$ and ${\cal F}.$ 

Let ${\cal F}_1={\cal F}\cup {\cal G}_1$ and $\ep_1=\min\{\ep/4, \dt_1/2\}>0.$
We apply \cite[Lemma 7.2]{Linsemiproj}.
Let $\dt_2$ (as $\dt$) and  ${\cal G}\subset A$ be a finite subset given 
by \cite[Lemma 7.2]{Linsemiproj} for ${\cal F}_1$ (instead of ${\cal F}$)
and $\ep_1$ (instead of $\ep$).      

Now suppose that $L$ satisfies  the assumption of Theorem \ref{Tpurely}
for $\dt$ and ${\cal G}$ as above.  Applying Proposition \ref{PC-E}, 
by choosing possibly even smaller $\dt$ and larger ${\cal G},$ we may assume 
, \wilog, that $L$ is a \cpc.

As mentioned in \ref{Dunital}, we may assume 
that $L(1_A)=e\in B(H)$ is a projection. 
There is a unital embedding $\iota:O_2\to B(H)$ and 
a unital embedding $j: O_2\to eB(H)e.$  
Let $C$ be the separable \SCA\, of $B(H)$ generated by $L(A),$ $j(O_2)$ and  $\iota(O_2).$  Note that $C$ has a unital \SCA\, isomorphic to $O_2.$
Since $A$ is amenable, by \cite{KP}, there is a unital injective \hm\,
$h_0: A\to O_2.$

Applying \cite[Lemma 7.2]{Linsemiproj}, 
one obtains an isometry $v\in M_2(C)\subset M_2(B(H))$ 
such that
\beq
&&vv^*=1_{B(H)}\andeqn\\\label{Tpurely-5}
&&\|v^*(L(a)\oplus j\circ h_0(a))v-L(a)\|<\ep_1\rforal a\in {\cal F}_1.
\eneq
By Lemma \ref{Luniq}, there is a unitary $u\in M_2(B(H))$  and a \hm\, 
$h_1: A\to B(H)$ such that 
\beq\label{Tpurely-6}
\|u^*(h_1(a)\oplus j\circ h_0(a))u-L(a)\oplus j\circ h_0(a)\|<\ep/2\rforal a\in {\cal F}.
\eneq
Define $h: A\to B(H)$ by $h(a)=v^*u^*(h_1(a)\oplus j\circ h_0(a))uv$
for all $a\in A.$
Then, by \eqref{Tpurely-5} and \eqref{Tpurely-6},  for all $a\in {\cal F}.$
\beq
\hspace{-0.4in}\|L(a)-h(a)\|&\le &\|L(a)-v^*(L(a)\oplus j\circ h_0(a))v\|\\
&&+\|v^*(L(a)\oplus j\circ h_0(a))v-v^*u^*(h_1(a)\oplus j\circ h_0(a))uv\|\\
&<& \ep_1+\ep/2\le \ep.
\eneq

For the case that $A$ is not unital, then, since $A$ is purely infinite, by \cite{Zh}, 
it has real rank zero.  Therefore 
there is a projection $p\in A$ such that
\beq
\|pxp-x\|<\ep/4\rforal x\in {\cal F}.
\eneq
Thus, by replacing ${\cal F}$ by $\{pxp: x\in {\cal F}\}$ and 
$\ep$ by $\ep/2,$ we may reduce the general case to the case that 
$A$ is unital.
This completes the proof of Theorem \ref{CM0}.

\section{Property P1, P2 and P3}

\begin{df}\label{DP1} (see Section 2 of \cite{Linfull})

Let $B$ be a unital \CA. We say $B$ has property P1, if for every full element $b\in B$ there exist 
$x,\, y\in B$ such that $xby=1.$
If $b$ is positive, it is easy to see that $xby=1$ implies that there is $z\in B$ such that $z^*bz=1.$
\end{df}

\begin{df}\label{DP2}(see Section 2 of \cite{Linfull})

Let $B$ be a unital \CA. We say $B$ has property P2, if $1$ is properly infinite, that is, 
if there is a projection $p\not=1$ and partial isometries $w_1, w_2\in B$ such that
\beq
w_1^*w_1=1,\,\, w_1w_1^*=p,\,\, w_2^*w_2=1\andeqn w_2w_2^*\le 1-p.
\eneq
\end{df}

Every unital purely infinite simple \CA\, has properties P1 and P2.

\begin{df}\label{DP3}(see Section 2 of \cite{Linfull})

Let $B$ be a unital \CA. We say that $B$ has property P3, if for any separable \SCA\, $A\subset B,$
there exists a sequence of elements $\{a_n^{(i)}\}_{n\in \N}\in l^\infty(B),\,\,\, i=1,2,...,$ satisfying the following:

(a) $0\le a_n^{(i)}\le 1 \,\, \rforal i \andeqn n;$

(b) $\lim_{n\to\infty}\|a_n^{(i)}c-ca_n^{(i)}\|=0\rforal i\andeqn c\in A;$

(c) $\lim_{n\to\infty}\|a_n^{(i)}a_n^{(j)}\|=0,\,\,{\rm if}\,\,\, i\not=j;$

(d) $\{a_n^{(i)}\}_{n\in \N}$ is a full  element in $l^\infty(B)$ for all $i\in \N.$

Note that $\{a_n^{(i)}\}\not\in c_0(B)$ for any $i,$ since it is full in $l^\infty(B).$
In fact, 
\beq
\lim\inf_n\|a_n^{(i)}\|>0.
\eneq
Otherwise there would be a subsequence $\{n_k\}$ such that
\beq
\lim_k\|a_{n_k}^{(i)}\|=0.
\eneq
Note that $I=\{\{b_n\}\in l^\infty(B): \lim_k\|b_{n_k}^{(i)}\|=0\}$ is an ideal of $l^\infty(B)$ and 
$\{a_n^{(i)}\}\in I.$ This contradicts with (d). 
Hence, 
by replacing $a_k^{(i)}$ by $a_{k}^{(1)}/\|a_{k}^{(i)}\|,$ $k\in \N,$ 
we see $\{a_{k}^{(i)}\}$ satisfies (a), (b), and (d).
Thus, in what follows, we may 
assume, in the definition above,  that $\|a_n^{(i)}\|=1.$
\end{df}

The following is taken from \cite[Lemma 3.1]{Lincs2}. We present here since we use 
some details in the proof.

\begin{prop}\label{Pappid}
Let $A$ be a non-unital but $\sigma$-unital \CA. 
Suppose that $\{E_n\}$ is an approximate identity for 
$A$ such that
\beq
E_{n+1}E_n=E_n\tforal n\in\N.
\eneq
Then, for each separable \SCA\, $D\subset A,$ 
there exists an approximate identity $\{e_n\}\subset A$ 
such that
\beq
e_{n+1}e_n=e_n\tforal n\in \N\tand \lim_{n\to\infty}\|e_na-ae_n\|=0\tforal a\in D,
\eneq
where $e_n\subset {\rm Conv}(E_i: i\ge n).$
Moreover, we may assume that there are   subsequences
$k(n)<k(n+1),$ $n\in\N$ such that 
\beq
e_{n+1}\ge E_{k(n+1)},\,\, E_{k(n+1)}e_n= e_n\ge E_{k(n)}\rforal n\in \N.
\eneq
\end{prop}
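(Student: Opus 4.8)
The plan is to follow the classical construction of a quasicentral approximate identity (Arveson, Pedersen), but carried out \emph{while preserving the telescoping relation} $E_{n+1}E_n = E_n$ of the given approximate identity. First I would fix a dense sequence $\{x_k\}$ in the unit ball of the separable \SCA\, $D$. By the standard convexity argument (cf.\ the proof of \cite[Theorem 3.12.14]{Pedersen}), for each $n$ one can choose an element $e'_n$ in the convex hull $\mathrm{Conv}(E_i : i \ge n)$ with
\beq
\|e'_n x_k - x_k e'_n\| < 1/n \tforal k \le n,
\eneq
so that $\lim_{n\to\infty}\|e'_n d - d e'_n\| = 0$ for all $d \in D$, since $\{x_k\}$ is dense in $D^{\bf 1}$ and all elements involved have norm at most $1$.

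The key observation is how a convex combination $e'_n = \sum_{i=n}^{m(n)}\alpha_{n,i}E_i$ interacts with the relation $E_{i+1}E_i = E_i$. Writing it as a telescoping sum
\beq\nonumber
e'_n = \Big(\sum_{i=n}^{m(n)}\alpha_{n,i}\Big)E_n + \Big(\sum_{i=n+1}^{m(n)}\alpha_{n,i}\Big)(E_{n+1}-E_n) + \cdots + \alpha_{n,m(n)}(E_{m(n)}-E_{m(n)-1}),
\eneq
and using that $\sum_i \alpha_{n,i} = 1$ with nonincreasing partial sums, one reads off the sandwich $E_n \le e'_n \le E_{m(n)}$ and, more importantly, the absorption relation $e'_n E_m = e'_n$ for all $m > m(n)$ (because $E_m E_i = E_i$ for $i \le m(n) < m$). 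In particular $e'_n E_{m(n)+1} = e'_n$.

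Now I would pass to a subsequence: set $k(1)=1$, $e_1 = e'_1$; then $k(2) = m(1)+1$, $e_2 = e'_{k(2)}$; then $k(3) = m(k(2))+1$, $e_3 = e'_{k(3)}$; and so on inductively, always jumping past the index $m(\cdot)$ of the previously chosen element. By the absorption relation just derived, $e_{n+1}$ is supported on indices $\ge k(n+1) > m(\text{(index of }e_n))$, hence $e_{n+1}e_n = e_n$; and the sandwich bounds give $e_{n+1} \ge E_{k(n+1)}$ and $E_{k(n+1)}e_n = e_n \ge E_{k(n)}$. The telescoping structure also guarantees $0 \le e_n \le 1$. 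Since $e_n \ge E_{k(n)}$ and $\{E_n\}$ is an approximate identity, so is $\{e_n\}$; and near-commutation with $D$ is inherited from \eqref{Pappid-2+}, i.e.\ from the passage to a subsequence of $\{e'_n\}$.

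The only genuinely delicate point — and the one I would be most careful about — is organizing the inductive choice of the subsequence so that \emph{simultaneously} the telescoping identity $e_{n+1}e_n = e_n$ holds, the interlacing $e_{n+1}\ge E_{k(n+1)}$, $E_{k(n+1)}e_n = e_n \ge E_{k(n)}$ holds, and the quasicentrality is not destroyed; everything else is bookkeeping with convex combinations and the relation $E_{i+1}E_i = E_i$. Since this statement is quoted verbatim from \cite[Lemma 3.1]{Lincs2}, the proof here is included only to make the paper self-contained and to expose the constants, so I would keep it at the level of the sketch above and refer to \cite{Lincs2} for the remaining routine verifications.
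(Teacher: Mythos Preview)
Your proposal is correct and follows essentially the same route as the paper's own proof: the same dense sequence, the same Pedersen convexity step to get $e'_n\in\mathrm{Conv}(E_i:i\ge n)$, the same telescoping rewrite yielding $E_n\le e'_n\le E_{m(n)}$ and the absorption $e'_n E_m=e'_n$ for $m>m(n)$, and the same inductive subsequence $k(1)=1$, $k(n+1)=m(k(n))+1$. There is nothing to add.
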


\begin{proof}
Let $\{x_k\}\subset D$ be a dense sequence in the unit ball of $D.$
Then (see the proof of \cite[Theorem 3.12.14]{Pdbook})
there is a sequence of elements  $\{e_n'\}$
such that
\beq\label{Pappid-2}
\|e_n'x_k-x_ke_n'\|<1/n\rforal k\le n,
\eneq
where $e_n'$ is in the convex hull of 
$\{E_i: i\ge n\}.$  Hence 
\beq\label{Pappid-2+}
\lim_{n\to\infty}\|e_n'd-de_n'\|=0\rforal d\in D.
\eneq
Suppose that $e_n'=\sum_{i=n}^{m(n)} \af_{n,i}E_i\subset {\rm Conv}(E_i: i\ge n),$
where $\af_{n,i}\ge 0$ and $\sum_{i=n}^{m(n)} \af_{n,i}=1.$
Since $E_{i+1}E_i=E_i,$ 
we have 
\beq\nonumber
e_n'&=&(\sum_{i=n}^{m(n)}\af_{n,i}) E_n+(\sum_{i=n+1}^{m(n)}\af_{n,i})(E_{n+1}-E_n)+
(\sum_{i=n+2}^{m(n)}\af_{n,i}) (E_{n+2}-E_{n+1})+\\
&&\hspace{1in}\cdots + \af_{n,m(n)}(E_{m(n)}-E_{m(n)-1})\\
&=&E_n+(\sum_{i=n+1}^{m(n)}\af_{n,i})(E_{n+1}-E_n)+
(\sum_{i=n+2}^{m(n)}\af_{n,i}) (E_{n+2}-E_{n+1})+\\\label{Pappid-3}
&&\hspace{1in}\cdots + \af_{n,m(n)}(E_{m(n)}-E_{m(n)-1}).
\eneq
Hence
 \beq
 E_n\le e_n'\le E_{m(n)}.
 \eneq
 Moreover, if $m>m(n),$ 
 \beq\label{Pappid-4}
 e_n'E_m=e_n'.
 \eneq
 In particular,
 \beq\label{Pappid-5}
 e_n'E_{m(n)+1}=e_n'. 
 \eneq
 Choose $k(1)=1.$  $e_1=e_1',$ $k(2)=m(1)+1,$ 
$e_2=e_{m(1)+1}',$  $k(3)=m(m(1)+1)+1,$ and 
$e_3=e_{k(3)}',....$
Then, by \eqref{Pappid-3} and by induction,
we may construct $\{e_n\}\subset \{e_n'\}$ such
that
\beq\label{Pappid-8}
e_{n+1}\ge E_{k(n+1)},\,\, E_{k(n+1)}e_n=e_n\ge E_{k(n)},\,\,\, n\in \N.
\eneq
Moreover, we have, by \eqref{Pappid-5} and by \eqref{Pappid-3}
\beq
e_{n+1}e_n=e_n,\,\,n\in \N.
\eneq
Furthermore, by \eqref{Pappid-2+},
\beq
\lim_{n\to\infty}\|e_na-ae_n\|=0\rforal a\in D.
\eneq
It follows from \eqref{Pappid-8} that $\{e_n\}$ forms an approximate identity for $A.$
%
%
\end{proof}

\begin{lem}\label{Lold}
Let $A$ be a unital \CA\, and $B=A\otimes {\cal K}.$
Then $M(B)$  and $M(B)/B$ have property P3.
\end{lem}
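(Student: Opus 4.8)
The plan is to produce, for each separable $C^*$-subalgebra $A_0\subset M(B)$ (respectively $A_0\subset M(B)/B$), a family $\{a_n^{(i)}\}$ satisfying conditions (a)--(d) of Definition \ref{DP3}, using the standard matrix units of $\mathcal K$. Write $B=A\otimes\mathcal K$ and fix a system of matrix units $\{e_{jk}\}_{j,k\in\N}$ for $\mathcal K$, with $p_n:=\sum_{j=1}^n e_{jj}$ the increasing sequence of finite-rank projections; then $\{1_A\otimes p_n\}$ is an approximate identity for $B$ consisting of projections in $M(B)$ with $(1_A\otimes p_{n+1})(1_A\otimes p_n)=1_A\otimes p_n$, so Proposition \ref{Pappid} applies to any separable $D\subset B$. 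The key combinatorial idea is to split $\N$ into infinitely many disjoint infinite subsets $\N=\bigsqcup_{i\in\N}S_i$ (for instance $S_i=\{2^{i-1}(2m-1):m\in\N\}$) and, for the $i$-th sequence, build a quasicentral approximate-identity-type element supported on the coordinates in $S_i$.

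The steps, in order: First I would treat $M(B)$. Given separable $A_0\subset M(B)$, let $D=\overline{A_0 B A_0+B}$, a separable $C^*$-subalgebra of $B$ containing an approximate identity of $B$; apply Proposition \ref{Pappid} to $D$ to get an approximate identity $\{e_n\}\subset B$ of the form $e_n\in\mathrm{Conv}(1_A\otimes p_j:j\ge n)$ with $e_{n+1}e_n=e_n$, $E_{k(n+1)}e_n=e_n\ge E_{k(n)}$ where $E_j=1_A\otimes p_j$, and $\lim_n\|e_na-ae_n\|=0$ for $a\in D$; note that since elements of $M(B)$ are multipliers of $B$ and the $e_n$ are "almost central" telescoping chunks, in fact $\lim_n\|e_nx-xe_n\|=0$ for all $x\in A_0$ as well (this is the standard quasicentrality of such an approximate identity in $M(B)$, which one checks on $x\in A_0$ using that $xE_j-E_jx\in B$ and $e_n$ is asymptotically central for $B$). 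Second, using the telescoping structure $e_n=E_n+\sum(\cdots)(E_{j+1}-E_j)$ from the proof of Proposition \ref{Pappid}, I would define the $i$-th approximating element by extracting the "increments" $f_n:=e_{n+1}-e_n\ge 0$ and grouping them along the residue classes: set $a_n^{(i)}=\bigl(\sum_{j\in T_n^{(i)}} f_j\bigr)^{1/2}$-type expressions (or more simply, re-run Proposition \ref{Pappid} with the matrix units reindexed so that the $i$-th run only uses the block $\{e_{jk}:j,k\in S_i\}$, producing $a^{(i)}_n\in M(B)$ supported, modulo $B$, in $1_A\otimes \overline{\mathrm{span}}\{e_{jk}:j,k\in S_i\}$). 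This immediately gives (a) $0\le a_n^{(i)}\le 1$, (b) asymptotic commutation with $A_0$, and (c) $\lim_n\|a_n^{(i)}a_n^{(j)}\|=0$ for $i\ne j$ because the supports are (asymptotically) orthogonal. Third, for (d): $\{a_n^{(i)}\}_{n}$ is full in $l^\infty(M(B))$ because it dominates, for large $n$, an element of the form $1_A\otimes q$ with $q$ a nonzero projection in $\mathcal K$ supported on $S_i$; since $1_A\otimes q$ generates $B$ as an ideal of $M(B)$ and $M(B)$ is unital, any ideal of $l^\infty(M(B))$ containing $\{a_n^{(i)}\}$ contains $\{1_A\otimes q\}_n$, hence contains $\{1_{M(B)}\}_n$, hence is everything — this uses that $A$ is unital so $1_A\otimes q$ is full in $M(B)$.

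Fourth and last, I would pass to the corona $M(B)/B$: let $\pi_B:M(B)\to M(B)/B$ be the quotient map, lift a separable $C^*$-subalgebra $A_1\subset M(B)/B$ to a separable $A_0\subset M(B)$ with $\pi_B(A_0)\supseteq A_1$ (possible since $\pi_B$ is surjective and $B$ is $\sigma$-unital, so one may choose a separable lift), apply the $M(B)$ case to $A_0$ to get $\{a_n^{(i)}\}$, and push down to $b_n^{(i)}:=\pi_B(a_n^{(i)})$; conditions (a), (b), (c) pass through $\pi_B$ verbatim, and for (d) one observes that $\{b_n^{(i)}\}$ is full in $l^\infty(M(B)/B)$ since it dominates $\pi_B(1_A\otimes q)$ for a nonzero projection $q\in\mathcal K$, and $\pi_B(1_A\otimes q)\ne 0$ is full in the corona because $1_A\otimes q$ is full in $M(B)$ and fullness of a positive element is preserved under quotients by $B$ (any ideal of $M(B)/B$ pulls back to an ideal of $M(B)$ containing $1_A\otimes q$, forcing it to be all of $M(B)/B$). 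The main obstacle I anticipate is step two: verifying that the telescoped increments from Proposition \ref{Pappid} can genuinely be reorganized into infinitely many pairwise asymptotically-orthogonal sequences each of which remains asymptotically central for the \emph{given} separable subalgebra $A_0$ of $M(B)$ (not merely for $B$) — the cleanest route is probably not to reorganize a single approximate identity but to run Proposition \ref{Pappid} $\aleph_0$ times inside the mutually orthogonal hereditary subalgebras $1_A\otimes\overline{\mathrm{span}}\{e_{jk}:j,k\in S_i\}$ simultaneously, taking a diagonal subsequence so that all countably many asymptotic-centrality and asymptotic-orthogonality conditions hold along one common index sequence $n$.
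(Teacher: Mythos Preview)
Your option (a)---grouping the increments $f_j=e_{j+1}-e_n$ of a single quasi-central approximate identity along infinitely many disjoint infinite index sets---is exactly what the paper does. The paper passes to $e_n'=e_{2n+1}$ so that $(e'_{n+1}-e'_n)(e'_{m+1}-e'_m)=0$ whenever $|n-m|\ge 2$, chooses infinite $F^{(i)}\subset\N$ with pairwise gaps $\ge 2$ both internally and between different $i$'s, and sets $a_n^{(i)}=\sum_{m\in F^{(i)},\,m\ge n}(e'_{m+1}-e'_m)$ (strict sum). So at the level of strategy you are on the right track.

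There is, however, a genuine gap in your fullness argument, and it breaks the corona case. You claim $a_n^{(i)}$ dominates $1_A\otimes q$ for some nonzero projection $q\in\mathcal K$, and then that $\pi_B(1_A\otimes q)\ne 0$. But $q\in\mathcal K$ forces $1_A\otimes q\in A\otimes\mathcal K=B$, so $\pi_B(1_A\otimes q)=0$; your lower bound dies in the quotient. What the paper actually uses is that the strict sum $a_n^{(i)}$ dominates the \emph{infinite} projection $P_n^{(i)}:=\sum_{m\in F^{(i)},\,m\ge n}(E_{k(2m+3)}-E_{k(2m+2)})\in M(B)\setminus B$ (here $E_j=1_A\otimes p_j$). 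Because this is Murray--von Neumann equivalent to $1_{M(B)}$ via a partial isometry of norm $1$, one gets fullness of $\{a_n^{(i)}\}_n$ in $l^\infty(M(B))$ with uniform constants, and the same estimate survives the quotient to give fullness of $\{\pi_B(a_n^{(i)})\}_n$ in $l^\infty(M(B)/B)$. The point is that the lower bound must be an infinite-rank piece built from the telescoping differences $E_{k(2m+3)}-E_{k(2m+2)}$, not a single finite-rank $1_A\otimes q$.

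Your option (b)---running Proposition \ref{Pappid} separately inside the blocks $(1_A\otimes Q_i)B(1_A\otimes Q_i)$ for $Q_i=\sum_{j\in S_i}e_{jj}$---does not work, and the diagonal-subsequence fix you propose does not repair it. Elements of $A_0\subset M(B)$ need not commute, even approximately, with the block projections $1_A\otimes Q_i$, so an approximate identity living entirely inside one block has no reason to asymptotically commute with $A_0$. The paper avoids this by building a \emph{single} quasi-central approximate identity for $B$ relative to $A_0$ and only afterwards splitting its increments combinatorially; that way centrality is inherited automatically by each tail sum.
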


\begin{proof}
Let $\pi: M(B)\to M(B)/B$ be the quotient map and $D$ a separable \SCA\, of $M(B).$
Let $\{d_k\}$ be a dense sequence in the unit ball of $D.$ 
Set ${\cal F}_n=\{d_1, d_2,...,d_n\},\,\, n\in \N.$ 
Let $\{e_{i,j}\}$ be a system of matrix units for ${\cal K}.$
Set $E_n=\sum_{i=1}^n e_{i,i},$ $n\in \N.$ 

Applying Proposition \ref{Pappid},  we choose a quasi-central approximate identity $\{e_n\}$
such that
\beq\label{Lold-2}
e_{n+1}e_n=e_n\andeqn \lim_{n\to\infty}\|e_na-ae_n\|=0\rforal a\in D.
\eneq
We may assume that
\beq\label{Lold-3}
\|e_nd-de_n\|<1/2^n\rforal d\in {\cal F}_n,\,\,\,n\in \N.
\eneq
Moreover, there is a  subsequence $\{k(n)\}$ of $\N$ such that
\beq\label{Lconscale-3}
e_{n+1}\ge E_{k(n+1)},\,\,\, E_{k(n+1)}e_n=e_n\ge E_{k(n)},\,\,n\in \N.
\eneq
Note that $k(n+1)>k(n).$
Define $e_1'=e_1,$ $e_n'=e_{2n+1},$ $n\in \N.$
Then 
\beq\label{Lconscale-4}
&&e_{n+1}'e_n'=e_n'\andeqn\\\label{Lconscale-5}
&&e_{n+1}'-e_n'=e_{2n+3}-e_{2n+1}\ge E_{k(2n+3)}-e_{2n+1}\ge E_{k(2n+3)}-E_{k(2n+2)}.
\eneq
Note  that, if $|n-m|\ge 2,$
\beq
(e_{n+1}'-e_n')(e_{m+1}'-e_m')=0.
\eneq
Choose an infinite subset $F\subset \N$ such that
if $n\not=m$ are in $F,$ then $|n-m|\ge 2.$ 
Define 
\beq
a_{F}=\sum_{n\in F}(e_{n+1}'-e_n')
\eneq
(which converges in the strict topology).
By \eqref{Lconscale-5}, 
\beq
a_{F}\ge \sum_{n\in F} (E_{k(2n+3)}-E_{k(2n+2)})
\eneq
and the right term converges in the strict topology.

In $M(A\otimes {\cal K}),$ there is a partial isometry $w_F$ such that
\beq
w_F^* \sum_{n\in F} (E_{k(2n+3)}-E_{k(2n+2)})w_F=1_{M(A\otimes {\cal K})}.
\eneq
Note that $\|w_F\|=1.$ 
It follows that, for any sequence of $F_n$ (which satisfies the condition 
that that $k, m\in F_n$ and $k\not=m$ implies $|m-k|\ge 2$), the
sequence 
$\{a_{F_n}\}_{n\in \N}$ is full in $l^\infty(M(A)).$

One may construct a sequence $\{F^{(i)}\}$ of  infinite subsets 
of $\N$ such that, for each $i,$ if $m, n\in F^{(i)}$ are distinct, then  $|m-n|\ge 2,$ and
if $m\in F^{(i)},$ $n\in F^{(j)},$ and  $i\not=j,$   then $|n-m|\ge 2.$ 
Therefore, if $i\not=j,$  
\beq\label{Lold-11}
a_{F^{(i)}} a_{F^{(j)}}=0.
\eneq
For each $i\in \N,$ define $F^{(i)}_n=F^{(i)}\cap \{m\in \N: m\ge n\}.$ 
Put $a_n^{(i)}=a_{F^{(i)}_n},$ $n\in \N$ and $i\in \N.$ 
Then, 

(d) $\{a_n^{(i)}\}_{n\in \N}$ is full in $l^\infty(M(B))$  and, by \eqref{Lold-11},

(c)  $a_n^{(i)}a_n^{(j)}=a_n^{(j)}a_n^{(i)}=0\rforal n,\,\,{\rm when}\,\,i\not=j.$

For any $k\in \N$ and $d\in {\cal F}_k,$ 
 by \eqref{Lold-3}, 
\beq
\|a_n^{(i)}d-da_n^{(i)}\|<\sum_{i\in F_n^{(i)}} {2\over{2^i}}<1/2^{n-1}\to 0 \,\,\, {\rm as} \,\,n\to 0
\eneq
for all $i\in \N.$ 
It follows that
\beq
\lim_{n\to\infty}\|a_n^{(i)}d-da^{(i)}_n\|=0\rforal d\in \cup_{k=1}^\infty {\cal F}_k.
\eneq
Since $\cup_{k=1}^\infty {\cal F}_k$ is dense in the unit ball of $D,$  the above implies 
that
\beq\label{Lold-20}
\lim_{n\to\infty}\|a_n^{(i)}d-da^{(i)}_n\|=0\rforal d\in D\andeqn i\in \N.
\eneq
Thus 
$M(A)$ has property P3.

Moreover, by considering the sequence $\{\pi(a_n^{(i)})\},$ we see that 
$M(B)/B$ also has properties P1, P2 and P3.
\end{proof}

\begin{prop}\label{Pconscal}
Let $A$ be a non-unital but $\sigma$-unital \CA\, such that $M(A)/A$ is a simple \CA\, 
with property P1. 
Then $M(A)/A$ has  property  P3. 
\end{prop}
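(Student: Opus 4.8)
The plan is to follow the construction in the proof of Lemma~\ref{Lold}, replacing the use of the matrix unit structure of $A\otimes\mathcal K$ by simplicity of $M(A)/A$ together with property P1. Write $B=M(A)/A$ and let $\pi\colon M(A)\to B$ be the quotient map. Fix a separable \SCA\, $D\subset B$, choose a dense sequence $\{d_k\}$ in the unit ball of $D$, lift each $d_k$ to $\tilde d_k\in M(A)$ with $\|\tilde d_k\|\le\|d_k\|+1/k$, and let $\tilde D\subset M(A)$ be the separable \SCA\, generated by $\{\tilde d_k,\tilde d_k^*:k\in\N\}$, so that $\pi(\tilde D)$ is a \SCA\, of $B$ containing $D$. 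Since $A$ is $\sigma$-unital there is a countable approximate identity $\{E_n\}$ of $A$ with $E_{n+1}E_n=E_n$; applying Proposition~\ref{Pappid} I would obtain an approximate identity $\{e_n\}\subset A$ with $e_{n+1}e_n=e_n$ and $\lim_n\|e_nx-xe_n\|=0$ for all $x\in\tilde D$, arranged (as in Lemma~\ref{Lold}) so that, for a fixed dense sequence $\{x_k\}$ of $\tilde D$, $\|e_nx_k-x_ke_n\|<1/2^n$ whenever $k\le n$. Because $A$ is not unital, the increasing sequence $\{e_n\}$ is not norm-convergent, hence not Cauchy, so after passing to a subsequence (which preserves $e_{n+1}e_n=e_n$, the quasicentrality, and the estimates, since $e_me_n=e_n$ for all $m\ge n$) I may assume there is $\ep_0>0$ with $\|e_{n+1}-e_n\|\ge\ep_0$ for every $n$.

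Set $f_n=e_{n+1}-e_n$; then $0\le f_n\le 1$, $\|f_n\|\ge\ep_0$, and from $e_me_n=e_n$ for $m\ge n$ one checks $f_ne_m=0$ for all $m\le n$, whence $f_nf_m=0$ whenever $|n-m|\ge 2$. Exactly as in the proof of Lemma~\ref{Lold} I would fix a countable family $\{F^{(i)}\}_{i\in\N}$ of infinite subsets of $\N$ that is \emph{$2$-separated}, meaning $|m-n|\ge 2$ whenever $m\in F^{(i)}$, $n\in F^{(j)}$ and $(i,m)\ne(j,n)$. Put $a^{(i)}=\sum_{n\in F^{(i)}}f_n\in M(A)$ (strictly convergent). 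Since the summands are pairwise orthogonal positive contractions, $0\le a^{(i)}\le 1$ with $\|a^{(i)}\|\ge\ep_0$, and $a^{(i)}a^{(j)}=0$ for $i\ne j$; set $b_i=\pi(a^{(i)})\in B$.

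The substantive points are: (i) $b_ib_j=0$ for $i\ne j$, which is immediate; (ii) $b_i$ commutes with $D$; and (iii) $b_i\ne 0$. For (ii): since $A$ is an ideal in $M(A)$, each $[f_n,x_k]=f_nx_k-x_kf_n$ lies in $A$, and for $n\ge k$ one has $\|[f_n,x_k]\|\le\|[e_{n+1},x_k]\|+\|[e_n,x_k]\|<1/2^{n-1}$; therefore $\sum_{n\in F^{(i)},\,n\ge N}[f_n,x_k]$ converges in norm for $N\ge k$ to an element of $A$, the finitely many remaining terms also lie in $A$, and hence $[a^{(i)},x_k]=\sum_{n\in F^{(i)}}[f_n,x_k]\in A$ (the sum being understood strictly and coinciding with the above norm-convergent expression up to a finite sum). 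Thus $[b_i,\pi(x_k)]=0$ for all $k$, so $b_i$ commutes with $\pi(\tilde D)$ and in particular with $D$. For (iii), given $a\in A$ and $N\notin F^{(i)}$, the relations $e_Nf_n=f_ne_N=0$ for $n>N$ and $(1-e_N)f_n=0$ for $n<N$ give $(1-e_N)a^{(i)}(1-e_N)=\sum_{n\in F^{(i)},\,n>N}f_n$, an element of norm $\ge\ep_0$; since $\|(1-e_N)a(1-e_N)\|\le\|a-e_Na\|\to 0$ as $N\to\infty$, this forces $\|a^{(i)}-a\|\ge\|(1-e_N)(a^{(i)}-a)(1-e_N)\|\ge\ep_0-\|(1-e_N)a(1-e_N)\|\to\ep_0$, so $\dist(a^{(i)},A)\ge\ep_0$ and $b_i\ne 0$. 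I expect this last step, which is where non-unitality of $A$ is genuinely used, to be the only real obstacle; the rest is bookkeeping transported from the proof of Lemma~\ref{Lold}.

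Finally, to exhibit the data required by Definition~\ref{DP3} I would take the constant sequences $a_n^{(i)}:=b_i$ for all $n\in\N$. Conditions (a) and (c) are clear from $0\le b_i\le 1$ and $b_ib_j=0$, and (b) holds because $b_ic=cb_i$ exactly for every $c\in D$, by (ii). For (d): since $B$ is simple and $b_i$ is a nonzero positive element of $B$ (by (iii)), $b_i$ is full in $B$; as $B$ has property P1 there are $x,y\in B$ with $xb_iy=1_B$, hence, $b_i$ being positive, some $z_i\in B$ with $z_i^*b_iz_i=1_B$ (Definition~\ref{DP1}). The constant sequence $(z_i)_{n\in\N}\in l^\infty(B)$ then satisfies $(z_i)_n^*(b_i)_n(z_i)_n=(1_B)_{n\in\N}=1_{l^\infty(B)}$, so $\{b_i\}_{n\in\N}$ is full in $l^\infty(B)$. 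Replacing $b_i$ by $b_i/\|b_i\|$ if one wishes $\|a_n^{(i)}\|=1$, this shows that $M(A)/A$ has property P3.
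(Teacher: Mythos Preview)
Your proof is correct and follows the same route as the paper's: build a quasicentral approximate identity via Proposition~\ref{Pappid}, form the strict sums $\sum_{n\in F^{(i)}}(e_{n+1}-e_n)$ over a $2$-separated family of infinite subsets, take their images in $M(A)/A$ as constant sequences, and invoke simplicity together with property~P1 for fullness in $l^\infty(M(A)/A)$. The only difference is in verifying $\pi(a^{(i)})\ne 0$: the paper retains the inequalities $e_{n+1}'-e_n'\ge E_{k(2n+3)}-E_{k(2n+2)}$ from Proposition~\ref{Pappid} to bound $a_F$ below, whereas your subsequence-and-compression argument (using non-unitality of $A$ to force $\|e_{n+1}-e_n\|\ge\ep_0$) is a clean, self-contained substitute.
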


\begin{proof} We will show that $M(A)/A$ has property P3.
The proof is a modification of that of \ref{Lold}.
We will repeat some of the arguments.
Let $\pi: M(B)\to M(B)/B$ be the quotient map and $D$ a separable \SCA\, of $M(B).$
Let $\{d_k\}$ be a dense sequence in the unit ball of $D.$ 
Set ${\cal F}_n=\{d_1, d_2,...,d_n\}.$ 
Let $\{E_n\}$ be an approximate identity for $A$ such that 
\beq
E_{n+1}E_n=E_n\rforal n\in \N.
\eneq
Applying Proposition \ref{Pappid},  we choose a quasi-central approximate identity $\{e_n\}$
such that
\beq\label{Lconscal-2}
e_{n+1}e_n=e_n\andeqn \lim_{n\to\infty}\|e_nd-de_n\|=0\rforal d\in D.
\eneq
Moreover, there is a  subsequence $\{k(n)\}$ of $\N$ such that
\beq\label{Lconscale-3}
e_{n+1}\ge E_{k(n+1)},\,\,\, E_{k(n+1)}e_n=e_n\ge E_{k(n)},\,\,n\in \N.
\eneq
Note that $k(n+1)>k(n).$
Define $e_1'=e_1,$ $e_n'=e_{2n+1},$ $n\in \N.$
Then 
\beq\label{Lconscale-4}
&&e_{n+1}'e_n'=e_n'\andeqn\\\label{Lconscale-5+}
&&e_{n+1}'-e_n'=e_{2n+3}-e_{2n+1}\ge E_{k(2n+3)}-e_{2n+1}\ge E_{k(2n+3)}-E_{k(2n+2)}.
\eneq
Note  that, if $|n-m||\ge 2,$
\beq
(e_{n+1}'-e_n')(e_{m+1}'-e_m')=0.
\eneq
Choose an infinite subset $F\subset \N$ such that
if $n\not=m$ are in $F,$ then $|n-m|\ge 2.$ 
Define 
\beq
a_{F}=\sum_{n\in F}(e_{n+1}'-e_n')
\eneq
(which converges in the strict topology).
By \eqref{Lconscale-5+}, 
\beq
a_{F}\ge \sum_{n\in F} (E_{k(2n+3)}-E_{k(2n+2)})
\eneq
and the right term converges in the strict topology. 
Note that $\pi( \sum_{n\in F} (E_{k(2n+3)}-E_{k(2n+2)}))\not=0$ in $M(A)/A.$ 
Since $M(A)/A$ is simple and has property P1, the constant sequence 
$\{\pi(a_{F})\}$ is full in $l^\infty(M(A)/A).$ 

We  will choose $a_n^{(i)}=\pi(a_{F^{(i)}}),$  $n\in \N$ and 
$i\in \N.$  The rest of the proof carries with minimal notation modification.

\end{proof}

\begin{rem}\label{RemP1-3}

(1) Let $A$ be a non-unital and $\sigma$-unital simple \CA\, with continuous scale.
Then $M(A)/A$ is purely infinite and simple (see \cite[ Theorem 2.4 and Theorem 3.2]{Lincs2}). So it has properties P1 and P2.
By Proposition \ref {Pconscal}, $M(A)/A$ also has property P3.

(2) It follows from \ref{Lold}, \cite[Theorem 3.5]{Linfull} and part (1) of \cite[Proposition 3.11]{Linfull} 
that $B(l^2)$ has properties P1, P2 and P3. 

(3) If $A$ is a non-unital and $\sigma$-unital purely infinite simple \CA, 
then $M(A)$ and $M(A)/A$ have properties P1, P2 and P3 (see \cite[Proposition 3.4]{Linfull}).

There are other examples of \CA s satisfying properties P1, P2, and P3 (see \cite{Linfull}).
\end{rem}
%
%
%
%
\begin{prop}\label{Pcalkin}
Let $B_n$ be a unital purely infinite simple \CA\, which satisfies properties P1, P2 and P3,
$n\in \N.$
Then $q_\varpi(\{B_n\})$ has properties P1, P2 and P3. 
In particular, this holds for $B_n=B(l^2)/{\cal K}$ (for all $n\in \N$). 
\end{prop}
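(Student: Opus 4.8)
The plan is to verify properties P1, P2, and P3 for the ultrapower $q_\varpi(\{B_n\})$ by lifting witnesses from the $B_n$'s along the quotient map $\pi_\varpi$. The key point is that all three properties are, in a suitable sense, ``approximately finite" statements that pass to ultraproducts once one is careful about norm control.

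For P1, let $b\in q_\varpi(\{B_n\})$ be full; I want $x,y$ with $xby=1$, or equivalently, since I may reduce to the positive case, I want $z$ with $z^*bz=1$. First I would lift $b$ to $\{b_n\}\in l^\infty(\{B_n\})$ with $\pi_\varpi(\{b_n\})=b$; after replacing $b_n$ by $(b_n^*b_n)^{1/2}$-type approximants I may assume each $b_n\ge 0$ and $\|b_n\|\le \|b\|+1$. The obstacle is that fullness of $b$ in the quotient does not immediately give fullness of each $b_n$. However, since each $B_n$ is \emph{simple}, every nonzero positive element of $B_n$ is full; and since $b\ne 0$ we have $\lim_{n\to\varpi}\|b_n\|=\beta>0$, so on a set $S\in\varpi$ we have $\|b_n\|\ge\beta/2$. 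For $n\in S$, by property P1 of $B_n$ applied to the full positive element $b_n$ there is $z_n\in B_n$ with $z_n^*b_nz_n=1_{B_n}$; moreover, using functional calculus (cutting $b_n$ by a spectral projection $f_{\beta/4}(b_n)$ first, which dominates a positive multiple of a projection, then using P1/P2) one arranges $\|z_n\|\le K$ for a constant $K$ depending only on $\beta$ — this uniform bound is exactly the place one must be careful, and Remark \ref{Rfullid} together with simplicity is the mechanism. Setting $z_n=0$ for $n\notin S$ and $z=\pi_\varpi(\{z_n\})$ gives $z^*bz=1$.

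For P2, I simply lift: since each $B_n$ is purely infinite simple (hence $1_{B_n}$ is properly infinite) there are $w_{1,n},w_{2,n}\in B_n$ and a projection $p_n\ne 1_{B_n}$ with $w_{1,n}^*w_{1,n}=1$, $w_{1,n}w_{1,n}^*=p_n$, $w_{2,n}^*w_{2,n}=1$, $w_{2,n}w_{2,n}^*\le 1-p_n$. These are norm-one data, so the images under $\pi_\varpi$ give the required isometries and projection in $q_\varpi(\{B_n\})$, and one checks $\pi_\varpi(\{p_n\})\ne 1$ because $p_n-1_{B_n}$ has norm $1$ for all $n$. For P3, given a separable \SCA\ $\mathcal A\subset q_\varpi(\{B_n\})$ with a dense sequence $\{c^{(k)}\}$ in its unit ball, lift each $c^{(k)}$ to $\{c^{(k)}_n\}\in l^\infty(\{B_n\})$. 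Using property P3 of each $B_n$ applied to the separable \SCA\ of $B_n$ generated by $\{c^{(k)}_n:k\in\N\}$, one obtains for each $n$ elements $\{a^{(i)}_{m,n}\}_{m\in\N}$ satisfying (a)--(d) of Definition \ref{DP3} in $B_n$; by a diagonal argument over $m$ and over the finite subsets $\{c^{(1)},\dots,c^{(k)}\}$, I select indices $m(n)\to\infty$ so that $\|a^{(i)}_{m(n),n}c^{(k)}_n-c^{(k)}_na^{(i)}_{m(n),n}\|<1/n$ for $k\le n$ and $\|a^{(i)}_{m(n),n}a^{(j)}_{m(n),n}\|<1/n$ for $i\ne j\le n$, while keeping each $\{a^{(i)}_{m(n),n}\}_n$ full in $l^\infty(\{B_n\})$ (fullness is preserved because one can keep $\|a^{(i)}_{m(n),n}\|=1$ and use simplicity plus P1 of the $B_n$ to produce, for each $i$, a uniformly bounded sequence $\{z_{i,n}\}$ with $z_{i,n}^*a^{(i)}_{m(n),n}z_{i,n}=1_{B_n}$, which then shows the lifted sequence is not in any proper ideal of $l^\infty(\{B_n\})$ that could kill it). Setting $\tilde a^{(i)}_{\ell}=\pi_\varpi(\{a^{(i)}_{m(n),n}\})$ for all $\ell$ (constant in $\ell$, or shifted to get the ``tail" indexing of Definition \ref{DP3}) yields elements of $q_\varpi(\{B_n\})$ satisfying (a)--(d) relative to $\mathcal A$: conditions (b) and (c) hold because the defects go to $0$ along $\varpi$, and (d) holds because fullness in $l^\infty(\{B_n\})$ descends to fullness in the quotient $q_\varpi(\{B_n\})$ (an ideal of the quotient containing $\pi_\varpi(\{a^{(i)}_{m(n),n}\})$ pulls back to an ideal of $l^\infty(\{B_n\})$ containing $c_{0,\varpi}(\{B_n\})$ and $\{a^{(i)}_{m(n),n}\}$, hence is everything).

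The main obstacle throughout is the \emph{uniform norm control} on the witnesses: definitions P1 and P3 as stated allow arbitrarily large $x,y$ (resp. place no a priori bound tying fullness to a constant $K$), but to form elements of $l^\infty(\{B_n\})$ one needs bounded sequences. This is resolved by exploiting that each $B_n$ is purely infinite \emph{simple} — so a nonzero positive element dominates a scalar multiple of a nonzero projection, every nonzero projection is Murray–von Neumann equivalent to $1_{B_n}$, and the relevant partial isometries have norm one — turning the qualitative fullness into a quantitative $T$-fullness with $N\equiv 1$ and a constant $K$ governed only by the lower norm bound, exactly as in Proposition \ref{Pfull}. The last sentence of the proposition (the case $B_n=B(l^2)/\mathcal K$) is then immediate from Remark \ref{RemP1-3}(2),(3), since the Calkin algebra $B(l^2)/\mathcal K$ is unital, purely infinite, simple, and has properties P1, P2, P3. $\hfill\Box$
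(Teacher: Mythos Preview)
Your proof is correct and, for property P3, follows essentially the same diagonal argument as the paper: lift a dense sequence in the separable subalgebra, apply P3 in each $B_n$, choose indices $m(n)$ so that the commutator and orthogonality estimates hold for $i,j,k\le n$, then pass to constant sequences in $q_\varpi(\{B_n\})$ and verify (a)--(d). The paper handles P1 and P2 differently: rather than your direct lifting of witnesses with uniform norm bounds, it simply cites the known fact (\cite[Proposition 2.5]{Linqsex}, \cite[Proposition 6.2.6]{MRbook}) that an ultraproduct of unital purely infinite simple \CA s is again purely infinite simple, which immediately yields P1 and P2. Your direct route works too, but the step ``$f_{\beta/4}(b_n)$ dominates a positive multiple of a projection'' implicitly uses that purely infinite simple \CA s have real rank zero (Zhang \cite{Zh}); this is exactly how the paper obtains the uniform bound in its verification of condition (d) for P3, so you should cite it. Finally, for the last sentence, the correct reference for the Calkin algebra having P3 is Lemma \ref{Lold} (take $A=\C$), not Remark \ref{RemP1-3}(2),(3): item (2) concerns $B(l^2)$ itself, and item (3) requires $A$ purely infinite, which $\mathcal K$ is not.
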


\begin{proof}
It follows from \cite[Proposition 2.5]{Linqsex} (see also \cite[Proposition 6.26]{MRbook} that
$q_\varpi(\{B_n\})$ is purely infinite and simple. Hence, $q_\varpi(\{B_n\})$ has properties 
P1 and P2.  It remains to show that $q_\varpi(\{ B_n\}) $ has property P3.

Put $C=q_\varpi(\{B_n\}).$  Fix  a separable \SCA\, $D\subset C.$

Let $D_n\subset B_n$ be a separable \SCA\,  such that
$\pi_{\varpi}(\{D_n\})\supset D.$

Fix $n\in \N.$  Since each $B_n$ has property P3, find  sequences $\{a^{(i)}_{n, k}\}_{k\in \N}$ such that

(1) $0\le a_{n,k}^{(i)}\le 1$ and $\|a_{n,k}^{(i)}\|=1,$ for all $k$ and $i.$

(2) $\lim_{k\to\infty}\|a_{n,k}^{(i)} d'_n-d'_n a_{n,k}^{(i)}\|=0\rforal
d'_n\in D_n\andeqn  i\in \N.$

(3) $\lim_{k\to\infty} \|a_{n,k}^{(i)}a_{n,k}^{(j)}\|=0,$ \,\, if $i\not=j.$

(4) $\{a_{n,k}^{(i)}\}_{k\in\N}$ is a full element in $l^\infty(B_n)$ for all $i\in \N.$

Let $\{d_k\}$ be a dense sequence in the unit ball of $D.$
Let $\{d_{k,n}\}_{n\in \N}\in l^\infty(\{B_n\})$ be such that $\|d_{k,n}\|=1$
for all $k, n\in \N$ and 
$\pi_\varpi(\{d_{k,n}\}_{n\in \N})=d_k,$ $k\in \N.$

For each $n\in \N,$ choose $k(n)\in \N$ such that
\beq\label{Pcalkin-8}
\|a_{n,k(n)}^{(i)} d_{j,n}-d_{j,n} a_{n, k(n)}^{(i)}\|<1/2^n
\eneq
for $1\le j\le n,$ $i\in \N.$  Moreover, 
\beq\label{Pcalkin-9}
\|a_{n,k(n)}^{(i)}a_{n,k(n)}^{(j)}\|<1/2^n,\,\,\, i\not=j
\eneq
for all $n\in \N.$

Let $b_n^{(i)}=(a_{n, k(n)}^{(i)}),$ $n\in \N$ and $i\in \N.$ 
Note that $\|b_n^{(i)}\|=1.$ Since $B_n$ is purely infinite simple, 
it has real rank zero (see \cite{Zh}). There is a non-zero projection 
$q_n^{(i)}\in B_n$ such that
\beq
b_n^{(i)} q_n^{(i)}\ge (1-1/2n)q_n^{(i)},\,\, n\in \N.
\eneq
Since $B_n$ is purely infinite simple, there is $w_n^{(i)}\in B_n$ such that
\beq\label{Pcalkin-10}
(w_n^{(i)})^*b_n^{(i)}w_n^{(i)}=1_{B_n}\andeqn \|w_n^{(i)}\|\le {1\over{1-1/2n}}
\eneq
$n\in \N$ and $i\in \N.$ Define $\{w_n^{(i)}\}\in l^\infty(\{B_n\}).$
Put 
\beq
c^{(i)}=\pi_\varpi(\{b_n^{(i)}\}).
\eneq
Define $c_n^{(i)}=c^{(i)},$ $n\in \N$ and $i\in \N.$
By \eqref{Pcalkin-10},  we have 

(d): $\{c_n^{(i)}\}$ is full in $l^\infty(q_\varpi(\{B_n\})).$

Also, we have 

(a): $0\le c_n^{(i)}\le 1.$

For any $k,$     by \eqref{Pcalkin-8},
\beq
c^{(i)}d_j=d_jc^{(i)}\rforal 1\le j\le k\andeqn i\in \N.
\eneq
It follows that

(b): $c_n^{(i)} d=dc_n^{(i)} \rforal d\in D$ and for all $n\in \N.$
Moreover, by \eqref{Pcalkin-9},   we have

(c): $c_n^{(i)}c_n^{(j)}=c_n^{(j)}c_n^{(i)}=0$ if $i\not=j$ for all $n\in \N.$

Hence (by (a), (b), (c) and (d) above) $q_\varpi(\{B_n\})$ has property P3.
\end{proof}

\section{Absorbing}

\begin{lem}\label{Lqoetient}
Let
$A$ be a separable 
amenable \CA\, with a fixed strictly positive element
$e_A\in A$ with $\|e_A\|=1.$ 
For any $\ep>0$ and any finite subset ${\cal F}\subset A$
with $e_A\in {\cal F},$  
there are $\dt>0$ and a finite subset ${\cal G}\subset A$ satisfying the following:
for any  \cpc\, $L: A\to B,$
where $B$ is a  unital purely infinite simple \CA\, which satisfies property P3,
such that  $\|L(e_A)\|=1,$ 
\beq
\|L(ab)-L(a)L(b)\|<\dt\tforal a, b\in {\cal G},
\eneq
then there is a non-zero \hm\, $h: A\to O_2\to B$ (factors through $O_2$)  and a partial isometry $u\in M_2(B)$
such that $u^*u=1_B,$ $uu^*=1_B\oplus q,$ where $q\in B$ is a projection 
such that $qh(a)=h(a)q=h(a)$ for all $a\in A,$ 
\beq
\label{C22}
&&\|L(a)\|\le  \|h(a)\|+\ep\|a\| \tforal a\in {\cal F}\tand\\\label{C22-1}
&&\|u^*\diag(L(a), h(a))u-L(a)\|<\ep\tforal a\in {\cal F}.
\eneq
Moreover,  if $A$ is unital, we may choose $u$ such that
$uu^*=1_B\oplus h(1_A).$
Furthermore, if $[1_B]=0$ in $K_0(B),$
we may further assume that $uu^*=1_B\oplus 1_B.$ 
\end{lem}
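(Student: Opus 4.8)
The plan is to realize this as an absorption statement obtained by pushing an almost-multiplicative map into a sequence algebra and invoking property P3 there, then lifting back. First I would reduce, via Definition \ref{Dunital}, to the case where $L(1_A)$ is a projection when $A$ is unital (and keep track of $\|L(e_A)\|=1$ otherwise); this costs only a small perturbation absorbed into $\ep$. Next, since $A$ is separable amenable, fix an embedding $j_o\colon A\to O_2$ (by \cite{KP}) and a unital embedding $\iota_O\colon O_2\to B$; set $h=\iota_O\circ j_o$ (non-unital version when $L(1_A)$ is not the identity, using that a unital hereditary subalgebra of $O_2$ is $O_2$). This will be the target homomorphism, and it factors through $O_2$ by construction. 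The projection $q$ will be $h(1_A)$ in the unital case; in the non-unital case I take $q$ to be a suitable projection in $B$ dominating $h(A)$ (exists because $B$ is purely infinite simple, hence real rank zero by \cite{Zh}, and $h(A)$ generates a separable subalgebra — more precisely I pass to $\overline{h(A)}$ and use an approximate unit that can be taken to be projections converging strictly to some projection $q\le 1_B$, absorbing the resulting error into $\ep$).

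The core step is the estimate \eqref{C22-1}, i.e.\ that $\diag(L(a),h(a))$ is approximately unitarily equivalent (via an isometry with range $1_B\oplus q$) to $L(a)$. For this I argue by contradiction in a sequence algebra: if the conclusion fails for fixed $\ep_0,\mathcal F$, I get a sequence $L_n\colon A\to B$ of $\mathcal G_n$-$\dt_n$-multiplicative c.p.c.\ maps with $\mathcal G_n\uparrow$ dense, $\dt_n\to 0$, $\|L_n(e_A)\|=1$, for which no such isometry of size-controlled error exists. Form $\Lambda=\pi_\varpi(\{L_n\})\colon A\to q_\varpi(\{B\})$ and $H=\pi_\varpi(\{h\})$ (constant sequence). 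By Proposition \ref{PC-E} (Choi–Effros) I may assume each $L_n$ is genuinely c.p.c., so $\Lambda$ and $H$ are honest $*$-homomorphisms into $C:=q_\varpi(\{B\})$, which by Proposition \ref{Pcalkin} is purely infinite simple and has properties P1, P2, P3. Since $B$ is simple and $\|L_n(e_A)\|=1$, $\Lambda(e_A)\ne 0$, so $\Lambda$ is a (nonzero, hence by simplicity of... — actually by fullness in the purely infinite simple $C$) full homomorphism; likewise $H$ is full. Then by an $O_2$-type / P1–P2–P3 absorption in $C$ (this is where P3 enters: it produces the mutually orthogonal, asymptotically central full positive elements needed to build a Cuntz-type isometry implementing $\Lambda\oplus H\sim \Lambda$ on the finite set, cf.\ the mechanism of \cite{Linfull}) I obtain an isometry $V\in M_2(C)$ with $V^*V=1_C$, $VV^*=1_C\oplus H(1_A)$ (or $1_C\oplus 1_C$ when $[1_B]=0$, using P2), and $V^*\diag(\Lambda(a),H(a))V=\Lambda(a)$ for all $a\in\mathcal F$. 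Lifting $V$ to a sequence $\{v_n\}$ of elements of $M_2(B)$ that are approximately isometries with the prescribed range projections, the defining relations hold along $\varpi$ up to error $\to 0$, contradicting the choice of the $L_n$. This proves \eqref{C22-1}, and \eqref{C22} follows from it together with $\|V^*xV\|\le\|x\|$: $\|L(a)\|\le\|\diag(L(a),h(a))\|+\ep\|a\|=\max(\|L(a)\|,\|h(a)\|)+\ep\|a\|$, which after the standard rearrangement gives $\|L(a)\|\le\|h(a)\|+\ep\|a\|$ (the point being that if $\|L(a)\|>\|h(a)\|$ then $\|L(a)\|\le\|L(a)\|$ trivially, but combined with \eqref{C22-1} applied carefully one extracts the stated bound — I would instead derive \eqref{C22} directly from $\|h(a)\|=\|j_o(a)\|=\|a\|\ge\|L(a)\|-0$ using that $j_o$ is injective, so in fact $\|h(a)\|=\|a\|\ge\|L(a)\|$ and \eqref{C22} is immediate).

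For the "moreover" and "furthermore" clauses: when $A$ is unital the absorbing isometry can be arranged with $VV^*=1_C\oplus H(1_A)$ because $H(1_A)$ is a projection in $C$ and the Cuntz-isometry construction respects it; pulling back gives $uu^*=1_B\oplus h(1_A)$. When $[1_B]=0$ in $K_0(B)$, then $[1_C]=0$ in $K_0(C)$ as well, so $1_C\sim 1_C\oplus 1_C$ in the purely infinite simple $C$ (using P2), allowing the range projection to be taken to be $1_C\oplus 1_C$; pulling back and perturbing the resulting partial isometry to exact relations in $M_2(B)$ (possible since $B$ is purely infinite simple, comparing the relevant projections) yields $uu^*=1_B\oplus 1_B$.

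\emph{Main obstacle.} The delicate point is the absorption $\Lambda\oplus H\sim_{\mathcal F}\Lambda$ inside $C=q_\varpi(\{B\})$: one must use property P3 of $C$ to manufacture the asymptotically central, pairwise orthogonal full elements that allow $H(A)$ to be "swallowed" into a corner of $\Lambda(A)$ by a genuine isometry (not merely approximately), and then ensure the range projection is exactly $1_C\oplus H(1_A)$ (resp.\ $1_C\oplus 1_C$) rather than merely Murray–von Neumann equivalent to it — this is where simplicity and pure infiniteness of $C$ (Proposition \ref{Pcalkin}), together with $[1_B]=0$ for the last clause, are used to upgrade equivalence to equality. The lifting step back to $M_2(B)$, and checking that the range-projection conditions survive the lift, is routine but must be done with the size control $\|u\|\le$ const so that the contradiction is genuine.
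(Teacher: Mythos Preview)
The central gap is that you fix $h=\iota_O\circ j_o$ as an \emph{injective} embedding $A\to O_2\to B$ before seeing $L$, and then try to absorb this $H$ into $\Lambda$. That absorption is impossible whenever $\ker\Lambda\not\subset\ker H$, and nothing in the hypotheses prevents $\Lambda$ from having a kernel. Concretely, take $A=\mathbb C^2$, $e_A=(1,1)$, and let every $L_n(x,y)=xp$ for a fixed nonzero projection $p\in B$; these are honest $*$-homomorphisms with $\|L_n(e_A)\|=1$, so they are admissible in the negation of the lemma. With $\mathcal F_0\ni(0,1)$ and your injective $h(x,y)=xe_1+ye_2$ ($e_1,e_2$ nonzero orthogonal projections), the condition $u^*\diag(0,e_2)u\approx 0$ forces $e_2u_2\approx 0$ (writing $u=\binom{u_1}{u_2}$), while the range constraint $uu^*=1_B\oplus q$ with $qh(a)=h(a)$ for all $a$ forces $q\ge e_1+e_2$, hence $u_2u_2^*=q\ge e_2$ and $e_2=e_2u_2u_2^*\approx 0$, contradicting $\|e_2\|=1$. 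So no isometry $u$ of the required shape exists, and your contradiction argument never gets off the ground.

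The paper's proof handles this by letting $h$ depend on the (limit of the) maps $L_n$. It sets $I=\ker(\pi_\varpi\circ\Lambda)$, passes to $C=\tilde A/I$, and applies \cite[Theorem~7.5]{Linfull} to the \emph{injective} induced map $\psi\colon C\to q_\varpi(\{B_n\})$; that theorem does not accept a prescribed $H$ but rather \emph{produces} an injective $h_0\colon C\to O_2$ and an embedding $j_O\colon O_2\to q_\varpi(\{B_n\})$ for which the absorption holds. One then lifts $j_O$ to $J=\{J_n\}\colon O_2\to l^\infty(\{B_n\})$ by semiprojectivity of $O_2$ and sets $h_n=J_n\circ h_0\circ\pi_I$. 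These $h_n$ are generally \emph{not} injective (their kernel contains $I$), which is exactly what makes the absorption possible. Consequently your shortcut for \eqref{C22} via $\|h(a)\|=\|a\|$ is no longer available; the paper instead uses that $\psi$ and $h_0$ are both injective on $C$, so $\|\psi(\pi_I(a))\|=\|h_0(\pi_I(a))\|$, which after lifting yields $\|L_n(a)\|\le\|h_n(a)\|+(\ep_0/2)\|a\|$. A minor additional point: the target algebras $B_n$ may vary with $n$, so the sequence algebra should be $q_\varpi(\{B_n\})$ rather than $q_\varpi(B)$; Proposition~\ref{Pcalkin} is stated in that generality.
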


\begin{proof}
For the convenience, in the case that $A$ is unital,
we assume that $e_A=1_A.$

Assume that the lemma is false. 
Then there are $1/2>\ep_0>0$ and finite subset ${\cal F}_0\subset A$ satisfying     the following:
There exists a sequence of unital purely infinite simple \CA s $\{B_n\},$  a sequence of 
\cpc s $L_n: A\to B_n,$ a decreasing sequence   of positive numbers 
$r_n\searrow 0$ with $\sum_{n=1}^\infty r_n<\infty,$  an increasing sequence of finite subsets 
${\cal G}_n$ of the unit ball with $\cup_{n=1}^\infty {\cal G}_n$  dense in  the unit ball of $A,$ such that
$\|L_n(e_A)\|=1$ and 
\beq\label{Lquotient-1}
\|L_n(ab)-L_n(a)L_n(b)\|<r_n \rforal a, b\in {\cal G}_n,
\eneq
but
\beq\label{Lquotient-2}
\inf\{\max\{\|u_n^*\diag(L_n(a), h_n(a))u_n-L_n(a)\|: a\in {\cal F}_0\}\}\ge \ep_0,
\eneq
where the infimum is taken among all possible \hm s $h_n: A\to O_2\to B_n$
such that $\|L_n(a)\|\le \|h_n(a)\|+\ep_0\|a\|$  for all $a\in {\cal F}_0,$ 
all possible partial isometries  $u_n\in M_2(B_n)$  with
$u_n^*u_n=1_{B_n}$ and  $u_nu_n^*=1_{B_n}\oplus q_n,$ where $q_n$ is a projection in $B_n$ such that
\beq
h_n(a)q_n=q_nh(a)=h(a)\rforal a\in A,
%
\eneq
and $n\in \N.$
Moreover,  we may assume that $0\not\in {\cal F}_0,$ $e_A\in {\cal F}_0$ and ${\cal F}_0$ is in the unit ball of $A.$

Note that, by \eqref{Lquotient-1}, 
\beq\label{Lquotient-3}
\lim_{n\to\infty}\|L_n(ab)-L_n(a)L_n(b)\|=0\rforal a, b\in A.
\eneq

Denote by $\tilde A$ the minimal unitization  of $A.$ 
Define $\Lambda: \tilde A\to l^\infty(B)$ by $\Lambda(a)=\{L_n(a)\}_{n\in \N}$
and $\Lambda(1_{\tilde A})=\{1_{B_n}\}.$ 
Fix a free ultrafilter $\varpi\in \bt(\N)\setminus \N.$
Then, by \eqref{Lquotient-3}, $\pi_\varpi\circ \Lambda$  is a \hm\, from $A$ into $q_\varpi(B)=l^\infty(B)/c_{0,\varpi}(B).$
  Put $\psi':=\pi_\varpi\circ \Lambda$
and $I={\rm ker}\psi'.$ Let $C=\tilde A/I$ and $\pi_I: \tilde A\to C$ be the quotient map.  
Let $\psi: C\to q_\varpi(B)$  be the induced injective \hm\, by $\psi'$
(such that $\psi'=\psi\circ \pi_I$). We also have $\|\psi'(e_A)\|=1.$
In particular, $I\not=A.$  Moreover, in the case that $A$ is not unital, 
$C\not=1_{\tilde A}.$

Put $\eta=\min\{\ep_0/64, 1/64\}.$ 

Since $B$ is a unital purely infinite simple \CA, it follows from \cite[6.2.6]{MRbook} that
$q_\varpi(B)$ is a unital purely infinite simple \CA.  Therefore it has properties P1 and P2.
By the assumption, $B$ also has property P3.   By Proposition \ref{Pcalkin}, 
$q_\varpi(B)$  has property P3. 
Therefore, by \cite[Theorem 7.5]{Linfull},
there are injective \hm s $h_0: C\to O_2$ and $j_O: O_2\to q_\varpi(\{B_n\})$ 
and a partial isometry $v\in M_2(q_\varpi(\{B_n\}))$ such that
\beq\label{Lquotient-6}
&&v^*v=1_{q_\varpi(\{B_n\})},\,\, \, vv^*=1_{q_\varpi(\{B_n\})}\oplus j_O\circ h_0(1_C)\andeqn\\\label{Lquotient-7}
&&\|v^*\diag(\psi\circ \pi_I(a), j_0\circ h_0(\pi_I(a)))v-\psi\circ \pi_I(a)\|<\eta\rforal a\in {\cal F}_0
\cup\{1_{\tilde A}\}.
\eneq
Since both $\psi$ and $h_0$ are injective,
\beq\label{Lquotient-7+}
\|\psi\circ \pi_I(a)\|=\|h_0\circ \pi_I(a)\|\rforal a\in A.
\eneq
It follows that (see  \cite[Lemma 7.3]{Linsemiproj}, for example) that 
there is a non-zero \hm\, $J: O_2\to l^\infty(\{B_n\})$ 
(since $O_2$ is simple, $J$ is injective) such that $\pi_\varpi\circ J=j_O.$ 
Define $H: A\to l^\infty(\{B_n\})$ by $H(a)=J\circ h_0\circ \pi_I(a)$ for all $a\in A.$ 
We may write $J=\{J_n\},$ where each $J_n: O_2\to B_n$ is a non-zero \hm.
Put $q_n=J_n\circ h_0(1_C),$ $n\in \N.$ Since both $J_n$ and $h_0$ are \hm s, 
$q_n$ is a projection, $n\in \N.$ 
There is also a sequence of elements $v_n\in M_2(B_n)$
such that $\pi_\varpi(\{v_n\})=v.$

By \eqref{Lquotient-6},\eqref{Lquotient-7} and \eqref{Lquotient-7+},  there is ${\cal X}\in \varpi$
such that
\beq\label{Lquotient-8}
&&\|v_n^*v_n-1_{B_n}\|<\eta,\,\,
\|v_nv_n^*-(1_{B_n}+q_n)\|<\eta,\\\label{Lquotient-9}
&&\|L_n(a)\|\le \| h_0\circ \pi_I(a)\|+({\ep_0\over{2}})\|a\|\rforal a\in {\cal F}_0\andeqn\\\label{Lquotient-10}
&&\hspace{-0.5in}\|v_n^*\diag(L_n(a), J_n\circ h_0\circ \pi_I(a))v_n-L_n(a)\|<\eta+\min\{\ep_0/64, 1/64\}
\rforal a\in {\cal F}_0
\eneq
and for all $n\in {\cal X}.$   We note that 
\beq\label{Lqt-8+}
\|h_0\circ \pi_I(e_A)\|\ge \|L_n(e_A)\|-\ep_0\|e_A\|\ge 1/2.
\eneq
Put $p_n=1_{B_n}\oplus q_n,$ $n\in \N.$
By replacing $v_n$ by $p_nv_n1_{B_n},$ we may assume $p_nv_n1_{B_n}=v_n$
for all $n\in {\cal X}.$  Then there is, for each $n\in {\cal X},$ 
a partial isometry $u_n\in M_2(B_n)$ such that
\beq\label{Lquotient-12}
\|v_n-u_n\|<2\eta,\,\,\, u_n^*u_n=1_{B_n}\andeqn u_nu_n^*=p_n
\eneq
for all $n\in {\cal  X}.$  Put $h_n=J_n\circ h_0\circ \pi_I: A\to O_2\to B_n$
for $n\in {\cal X}.$ 
Then 
\beq\label{Lquotient-14}
\|L_n(a)\|\le \|h_n(a)\|+\ep_0\|a\|\rforal a\in {\cal F}_0.
\eneq
It follows from \eqref{Lquotient-10} that (recall that ${\cal F}_0$ is in the unit ball of $A$) 
\beq\label{Lquotient-15}
\|u_n^*\diag(L_n(a), h_n(a))u_n-L_n(a)\|<4\eta+4\min\{\ep_0/64, 1/64\}<\ep_0
\eneq
for all $n\in {\cal X}.$  Then, \eqref{Lquotient-15} and \eqref{Lquotient-14} 
contradict with \eqref{Lquotient-2}.   We also note that, 
in  case that $A$ is unital;, $u_nu_n^*=1_{B}\oplus h_n(1_A),$ by \eqref{Lqt-8+}, ${h_n}|_{A}\not=0$ 
in the case that $A$ is non-unital,  and 
\beq
&&u_nu_n^*\diag(L_n(a), h_n(a))=p_n\diag(L_n(a), h_n(a))\\
&&=\diag(L_n(a), h_n(a))p_n=\diag(L_n(a), h_n(a))\rforal a\in A.
\eneq 
Thus the lemma follows once we deal with the  ``Furthermore" part.

To see the ``Furthermore" part, let us assume that $[1_B]=0.$
It follows from the existence of $u$ above that $[q]=0.$ 
Since $B$ is purely infinite,  one obtains a partial isometry $w\in B$ such 
that
\beq
w^*w=q\andeqn ww^*=1_B.
\eneq
Define $h_2: A\to O_2\to B$ by $h_2(a)=wh(a)w^*$ for all $a\in A$ 
and $U:=(1_B\oplus w)u.$ Then 
\beq
U^*U=u^*(1_B\oplus w^*w)u=u^*(1_B\oplus q)u=1_B\andeqn\\
UU^*=(1_B\oplus w)uu^*(1_B\oplus w^*)=(1_B\oplus 1_B).
\eneq
Moreover, by \eqref{C22-1},
\beq
U^*\diag(L(a), h_2(a))U&=& u^*\diag(L(a), w^*wh(a)ww^*)u\approx_{\ep} L(a)
\eneq
for all $a\in {\cal F}.$  

Then we choose $U$ instead of $u$ and $h_2$ instead of $h.$ Note that 
\eqref{C22} also holds by replacing $h$ by $h_2.$
\end{proof}

\begin{cor}\label{MC1}
Let
$A$ be a separable 
amenable \CA. 
For any $\ep>0,$  any finite subset ${\cal F}\subset A,$ any $\sigma>0,$ 
there are $\dt>0,$ finite subsets ${\cal G}, {\cal H}\subset A$ 
satisfying the following:
for any non-zero \cpc\, $L: A\to B,$
where $B$ is a  unital purely infinite simple \CA\, which satisfies property P3,
such that 
\beq\label{MC1-1}
\|L(ab)-L(a)L(b)\|<\dt\tforal a, b\in {\cal G}\tand 
\|L(c)\|\ge \sigma\|c\| \tforal c\in {\cal H},
\eneq
then, there is an injective \hm\, $h: A\to O_2\to B$ (factors through $O_2$)  and a partial isometry $u\in M_2(B)$
such that $u^*u=1_B,$ $uu^*=1_B\oplus q$ for some projection $q\in B$ 
with $qh(a)=h(a)q=h(a)$ for all $a\in {\cal F},$ and 
\beq
&&\|u^*\diag(L(a), h(a))u-L(a)\|<\ep\tforal a\in {\cal F},
\eneq
If $A$ is unital, we may choose $q=h(1_A).$  Moreover, if $[1_B]=0$ in $K_0(B),$
we may choose $q=1_B.$
\end{cor}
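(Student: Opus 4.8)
The plan is to re-run the proof of Lemma \ref{Lqoetient} --- a direct appeal will not do, since that lemma only yields a \emph{non-zero} \hm, and injectivity of $h$ cannot be deduced from finitely many conditions on $L$. The single new ingredient is that the fullness hypothesis $\|L(c)\|\ge\sigma\|c\|$ on ${\cal H}$ both renders the normalization $\|L(e_A)\|=1$ of that lemma superfluous and forces the \hm\, produced there to be injective; no $KK$-uniqueness is available here (since $A$ is only assumed separable amenable), but none is needed.

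I would argue by contradiction. If the statement failed, there would be $\ep_0>0$, a finite subset ${\cal F}_0\subset A$ and $\sigma>0$ for which one can find unital purely infinite simple \CA s $B_n$ with property P3, \cpc s $L_n\colon A\to B_n$, a null sequence $r_n\searrow 0$, and increasing finite subsets ${\cal G}_n$ with $\overline{\bigcup_n{\cal G}_n}$ the unit ball of $A$ and ${\cal H}_n$ with $\bigcup_n{\cal H}_n$ dense in $A$, such that $\|L_n(ab)-L_n(a)L_n(b)\|<r_n$ on ${\cal G}_n$ and $\|L_n(c)\|\ge\sigma\|c\|$ on ${\cal H}_n$, but admitting no injective \hm\, $h_n\colon A\to O_2\to B_n$ and partial isometry $u_n$ as in the conclusion (including the two refinements). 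Put $\Lambda\colon A\to l^\infty(\{B_n\})$, $\Lambda(a)=\{L_n(a)\}$; since $r_n\to 0$, $\psi:=\pi_\varpi\circ\Lambda\colon A\to q_\varpi(\{B_n\})$ is a \hm, and since $\bigcup_n{\cal H}_n$ is dense while $\|L_n(c)\|\ge\sigma\|c\|$ on ${\cal H}_n$, one gets $\|\psi(a)\|=\lim_{n\to\varpi}\|L_n(a)\|\ge\sigma\|a\|$ for every $a\in A$; hence $\psi$ is injective. This is the only place fullness enters, and it lets us work with $\psi(A)\cong A$ directly rather than with a proper quotient $\tilde A/I$: take $C=\psi(A)$ when $A$ is unital (it carries the projection $\psi(1_A)$), and $C\cong\tilde A$ the unitization of $\psi(A)$ inside $q_\varpi(\{B_n\})$ when $A$ is non-unital (using that $A$ is essential in $\tilde A$).

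Next, exactly as in Lemma \ref{Lqoetient}, Proposition \ref{Pcalkin} (with \cite[Proposition 2.5]{Linqsex} and \cite[6.2.6]{MRbook}) shows $q_\varpi(\{B_n\})$ is unital purely infinite simple with properties P1, P2, P3, so \cite[Theorem 7.5]{Linfull} provides injective \hm s $h_0\colon C\to O_2$ and $j_O\colon O_2\to q_\varpi(\{B_n\})$ and a partial isometry $v$ with $v^*v=1$, $vv^*=1\oplus j_O\circ h_0(1_C)$ and $\|v^*\diag(\psi(x),j_O\circ h_0(x))v-\psi(x)\|<\eta$ on ${\cal F}_0\cup\{1_C\}$ for any preassigned small $\eta$. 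Lifting $j_O$ to $J=\{J_n\}\colon O_2\to l^\infty(\{B_n\})$ with each $J_n$ non-zero (hence injective, as $O_2$ is simple) and lifting $v$ to $\{v_n\}$, put $h_n:=J_n\circ h_0|_A$, which is injective, and $q_n:=J_n(h_0(1_C))$, a projection with $q_nh_n(a)=h_n(a)q_n=h_n(a)$ for all $a\in A$. Replacing $v_n$ by a partial isometry $u_n$ with $u_n^*u_n=1_{B_n}$, $u_nu_n^*=1_{B_n}\oplus q_n$ and restricting to a suitable ${\cal X}\in\varpi$, one obtains $\|u_n^*\diag(L_n(a),h_n(a))u_n-L_n(a)\|<\ep_0$ for $a\in{\cal F}_0$ and $n\in{\cal X}$, contradicting the choice of the $L_n$. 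The two refinements are obtained exactly as there: when $A$ is unital one arranges $q_n=h_n(1_A)$, and when $[1_{B_n}]=0$ in $K_0(B_n)$ one conjugates $h_n$ by, and modifies $u_n$ with, a partial isometry $w_n$ satisfying $w_n^*w_n=q_n$, $w_nw_n^*=1_{B_n}$ to reach $u_nu_n^*=1_{B_n}\oplus 1_{B_n}$.

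I do not anticipate a real obstacle, since Lemma \ref{Lqoetient}, Proposition \ref{Pcalkin} and \cite[Theorem 7.5]{Linfull} do all the heavy lifting; the sole genuinely new step is the short observation that fullness on the increasing sets ${\cal H}_n$ with a \emph{uniform} constant forces $\psi$ to be injective on all of $A$. The one spot still requiring care is the unital refinement, where one must verify that the distinguished projection attached to $1_C$ can be taken to be $h_n(1_A)$; this is settled exactly as in the corresponding step of the proof of Lemma \ref{Lqoetient}.
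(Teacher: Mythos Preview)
Your proposal is correct and follows essentially the same route as the paper: re-run the contradiction argument of Lemma~\ref{Lqoetient}, observe that the uniform lower bound $\|L_n(c)\|\ge\sigma\|c\|$ on the increasing dense sets ${\cal H}_n$ forces $\psi=\pi_\varpi\circ\Lambda$ to be injective (so $I=\ker\psi'=\{0\}$ and $C=\tilde A$), whence the $h_0\colon C\to O_2$ produced by \cite[Theorem~7.5]{Linfull} is injective on $A$, and the rest goes through unchanged. The paper's own proof is just a two-paragraph pointer to exactly this modification.
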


\begin{proof}
Almost the same proof as that of Lemma \ref{Lqoetient} applies.
In the proof of Lemma \ref{Lqoetient}, we will ignore anything related to condition \eqref{C22}.
However,  the second part of condition \eqref{MC1-1} implies that, in addition to \eqref{Lquotient-3},
we may also  assume that 
\beq
\|L_n(a)\|\ge \sigma \|a\|\rforal a\in {\cal G}_n.
\eneq
Since we assume that ${\cal G}_n\subset {\cal G}_{n+1}$ for all $n\in \N.$
This implies that, for each $m\in \N,$
\beq
\|\pi_\varpi\circ \Lambda(a)\|\ge \sigma \|a\|\rforal a\in {\cal G}_m.
\eneq
Hence 
\beq
\|\pi\circ \Lambda(a)\|\ge \sigma \|a\|\rforal a\in \cup_{n=1}^\infty{\cal G}_n.
\eneq
It follows that 
\beq
\|\pi\circ \Lambda (a)\|\ge \sigma \|a\|\rforal a\in A.
\eneq
In other words, $\pi\circ L$ is injective and  $I={\rm ker}\psi'=\{0\}.$
Hence $C=A$ in the proof of Lemma \ref{Lqoetient}. So $h_0$ obtained in the proof of Lemma \ref{Lqoetient} is injective.
Thus the corollary follows the rest of the proof of Lemma \ref{Lqoetient}.
\end{proof}

\begin{rem}\label{RMC1}
One may notice, from the proof, that ${\cal H}$ depends on $\ep$ as well as ${\cal F},$
in particular, when $\ep$ is small and ${\cal F}$ is large,  ${\cal H}$ is also large.  But it does not depend on $\sigma.$  This feature does not, however,  seem to be very helpful. 
\end{rem}

\begin{cor}\label{MC2}
Let $A$ be a separable amenable \CA, $\ep>0$ and 
${\cal F}\subset A$ be a finite subset.
Then there are $\dt>0,$ a finite subset ${\cal G}\subset A$ satisfying the following:

Suppose that $L: A\to B,$ where $B$ is any 
unital purely infinite simple \CA\, with properties P3 such that $[1_B]=0$
in $K_0(B),$ 
is a \cpc\, such that
\beq
\|L(ab)-L(a)L(b)\|<\dt\tforal a,b\in {\cal G}.
\eneq
Then, for any integer $k\in \N,$  there exists a partial isometry $u\in M_{k+1}(B)$
and a \hm\, $h: A\to O_2\to B$ such that
\beq
&&\|L(a)\|\le \|h(a)\|+\ep\|a\| \rforal a\in {\cal F},\\
&&u^*u=1_B,\,\,\, uu^*=1_{M_{k+1}(B)}\tand\\
&&\|u^*\diag(L(a), \overbrace{h(a),h(a),...,h(a)}^k)u-L(a)\|<\ep\tforal a\in {\cal F}.
\eneq
\end{cor}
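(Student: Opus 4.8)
The plan is to reduce the statement to the case $k=1$, which is exactly Lemma~\ref{Lqoetient}, and then to absorb the resulting $k$ copies of the homomorphism in a single step, exploiting that $M_k(O_2)\cong O_2$ and that $[1_B]=0$. First I would fix $\ep$ and ${\cal F}$ and apply Lemma~\ref{Lqoetient} in its ``Furthermore, $[1_B]=0$'' form, with $\ep/3$ in place of $\ep$ and ${\cal F}\cup\{e_A\}$ in place of ${\cal F}$, producing $\dt>0$ and a finite ${\cal G}\subset A$. Given a \cpc\ $L$ as in the hypothesis (if $\|L(a)\|$ happens to be small for every $a\in{\cal F}$ one simply takes $h=0$ and any partial isometry $u$ with $u^*u=1_B$, $uu^*=1_{M_{k+1}(B)}$, which exists because $[1_B]=0$; the normalisation $\|L(e_A)\|=1$ required by Lemma~\ref{Lqoetient} is otherwise harmless), one obtains a homomorphism $h\colon A\to O_2\to B$ with $\|L(a)\|\le\|h(a)\|+(\ep/3)\|a\|$ on ${\cal F}$ and a partial isometry $u_0\in M_2(B)$ with $u_0^*u_0=1_B$, $u_0u_0^*=1_{M_2(B)}$ and $\|u_0^*\diag(L(a),h(a))u_0-L(a)\|<\ep/3$ on ${\cal F}$. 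Since $u_0u_0^*=1_{M_2(B)}$ (so the accompanying projection has trivial $K_0$-class) and $B$ is purely infinite and simple, the conjugation carried out in the ``Furthermore'' part of Lemma~\ref{Lqoetient} lets me also assume that $h=\iota\circ h_0$ for a \emph{unital} embedding $\iota\colon O_2\to B$ and a homomorphism $h_0\colon A\to O_2$.

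Now fix $k\in\N$. The algebra $M_k(O_2)$ is a unital Kirchberg algebra in the UCT class with vanishing $K$-theory, hence isomorphic to $O_2$; fix an isomorphism $\beta\colon M_k(O_2)\to O_2$. Because $[1_B]=0$ and $M_k(B)$ is purely infinite and simple, the $(1,1)$-corner $1_B$ and $1_{M_k(B)}$ are Murray--von Neumann equivalent in $M_k(B)$ (both properly infinite, full, with $K_0$-class $0$); fix $w\in M_k(B)$ with $w^*w=1_B$ and $ww^*=1_{M_k(B)}$. Then $x\mapsto w^*xw$ is a $*$-isomorphism of $M_k(B)$ onto $B$, so
\[
h'\colon a\longmapsto w^*\,\diag(\overbrace{h(a),\dots,h(a)}^{k})\,w
\]
is a homomorphism from $A$ to $B$ which factors as $\iota'\circ(\beta\circ\diag(h_0,\dots,h_0))$ with $\iota'\colon O_2\to B$ again a unital embedding. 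Moreover, writing $\tilde w:=1_B\oplus w\in M_{k+1}(B)$, one has $\tilde w^*\tilde w=1_{M_2(B)}$, $\tilde w\tilde w^*=1_{M_{k+1}(B)}$ and $\tilde w^*\,\diag(L(a),h(a),\dots,h(a))\,\tilde w=\diag(L(a),h'(a))$ exactly.

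The maps $h_0$ and $\beta\circ\diag(h_0,\dots,h_0)$ from $A$ to $O_2$ have the same kernel, so by Kirchberg's $O_2$-uniqueness theorem (applied to the separable exact algebra $A/\ker h_0$) they are approximately unitarily equivalent via unitaries of $O_2$; and the two unital embeddings $\iota,\iota'\colon O_2\to B$ are approximately unitarily equivalent in $B$ by the uniqueness counterpart of \cite[Theorem~7.5]{Linfull} (when $B$ is a Kirchberg algebra this is \cite{KP}). Composing the two equivalences yields a unitary $v\in B$ with $\|h(a)-v\,h'(a)\,v^*\|<\ep/3$ for all $a\in{\cal F}$. Setting $u:=\tilde w\,(1_B\oplus v)\,u_0\in M_{k+1}(B)$, a direct computation gives $u^*u=1_B$ and $uu^*=1_{M_{k+1}(B)}$, while for $a\in{\cal F}$, conjugating $\diag(L(a),h(a),\dots,h(a))$ successively by $\tilde w$ (no error), by $1_B\oplus v$ (error $<\ep/3$), and by $u_0$ (error $<\ep/3$) produces an element within $2\ep/3<\ep$ of $L(a)$; together with $\|L(a)\|\le\|h(a)\|+\ep\|a\|$ this is the desired conclusion.

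I expect the only real difficulty to be the uniform control of $h\approx_u h'$ in the last step: it rests on $M_k(O_2)\cong O_2$, on $1_{M_k(B)}\sim 1_B$ (both using $[1_B]=0$), and, crucially, on the uniqueness of unital embeddings of $O_2$ into the possibly non-separable, non-nuclear algebra $B$ — the uniqueness analogue of the absorption result already invoked in the proof of Lemma~\ref{Lqoetient}. A secondary, purely bookkeeping issue is the normalisation hypothesis $\|L(e_A)\|=1$ of Lemma~\ref{Lqoetient}, which is dispatched by treating the degenerate case ($L$ negligible on ${\cal F}$) separately as indicated above.
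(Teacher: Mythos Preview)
Your approach is correct and essentially the same as the paper's: both first invoke Lemma~\ref{Lqoetient} (in the ``$[1_B]=0$'' form) to get the $k=1$ absorption, and then upgrade to $k$ copies using that $O_2$ absorbs its own $k$-fold amplification. The organization of that second step differs. The paper stays inside $O_2$: it takes the diagonal $\phi\colon O_2\to M_k(O_2)$ and, using that $O_2$ is $KK$-contractible, finds a single partial isometry $v_2\in M_k(O_2)$ with $v_2^*v_2=1_{O_2}$, $v_2v_2^*=1_{M_k(O_2)}$ and $v_2^*\phi(x)v_2\approx x$ on $h_0({\cal F})$, then simply applies the amplified embedding $j_O$ to $v_2$. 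You instead compress in $M_k(B)$ via $w$ and then compare the two embeddings $\iota,\iota'\colon O_2\to B$; this works, but the right reference for the a.u.e.\ of unital $O_2\to B$ embeddings is \cite[Theorem~6.7]{LinTAMS04} (the paper uses exactly this in its normalization step), not the absorption theorem of \cite{Linfull}. The paper's route avoids this second comparison altogether.

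Two small execution points. First, there is a direction slip: with $u=\tilde w(1_B\oplus v)u_0$ you obtain $v^*h'(a)v$ in the middle step, so you need $\|h'(a)-v h(a)v^*\|<\ep/3$, not $\|h(a)-v h'(a)v^*\|<\ep/3$; just pick the unitary in the opposite direction. Second, your ``degenerate case'' ($h=0$, arbitrary $u$) does not give $\|u^*\diag(L(a),0,\dots,0)u-L(a)\|<\ep$ unless you also know $\|L(a)\|<\ep/2$ on ${\cal F}$; one really needs a threshold argument on $\|L(e_A)\|$ (the paper glosses over this too). Neither affects the strategy.
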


\begin{proof}
We may assume that $L\not=0.$

Applying Lemma \ref{Lqoetient}, we obtain $\dt>0,$
finite subset ${\cal G}\subset A,$ an injective  \hm\, $j_O: O_2\to B$
and a \hm\, $h_0: A\to O_2,$ a partial isometry $w\in M_2(B)$ and a projection 
$q\in B$  such that
\beq
&&\|L(a)\|\le \|h_1(a)\|+({\ep\over{4}})\|a\|\rforal a\in {\cal F},\\\label{MC2-9}
&&\|w^*\diag(L(a), j_O\circ  h_0(a))w-L(a)\|<\ep/4\rforal a\in {\cal F},\\\label{MC2-10}
&&w^*w=1_B,\,\, ww^*=1_B\oplus q\andeqn qh_1(a)=h_1(a)q=h_1(a)\rforal a\in A,
\eneq
where $h_1=j_O\circ h_0.$
If $A$ is unital, we may choose $q=h_1(1_A).$  In that case,
let $h_0(1_A)=e\in O_2$ 
Note that we have assumed that $q=h(1_A).$ 
We observe that $O_2\cong eO_2e.$
Since $K_0([1_B])=0$ and $B$ is purely infinite simple, there is a unital embedding $j'_O: eO_2e\to B.$ 
Consider $j_O', {j_O}|_{eO_2}: eO_2 e\to B.$ Since $O_2$ is $KK$-contractive, 
there is a partial isometry $v\in B$ with $v^*v=1_B$ and $vv^*=j_O(e)=q$
such that
\beq\label{MC2-12}
\|v(j'_O\circ h_0(a))v^*-j_O\circ h_0(a)\|<\ep/4\rforal a\in {\cal F}
\eneq
(see \cite[Theorem 6.7]{LinTAMS04}, for example).
We have
\beq
&&\hspace{-0.4in}(1_B\oplus v^*)w w^*(1_B\oplus v)=
(1_B\oplus v^*)(1_B\oplus q)(1_B\oplus v)=1_B\oplus v^*qv=1_B\oplus 1_B\andeqn\\
&&\hspace{-0.4in}w^*(1_B\oplus v)(1\oplus v^*)w=w^*(1_B\oplus q)w=1_B.
\eneq
Put $w_1=(1_B\oplus v^*)w\in M_2(B).$ Then, by \eqref{MC2-12} and \eqref{MC2-10},
for all 
$a\in {\cal F},$
\beq
w_1^*\diag(L(a), j_O'\circ h_0(a))w_1\approx_{\ep/4} w\diag(L(a), j_O\circ h_0(a))w\approx_{\ep/4} 
L(a).
\eneq

In the case that $A$ is not unital,  we may assume that $q=j_O(1_{O_2})\in B.$
Choose a unital embedding $j_O': O_2\to B.$
Then, there is a partial isometry $v'\in B$ such that
\beq
&&{v'}^*v'=1_B,\,\, v'{v'}^*=q\andeqn\\
&&\|{v'}j_O'(h_0(a)){v'}^*-j_O(h_0(a))\|<\ep/4\rforal a\in {\cal F}.
\eneq
Define $w_2=(1_B\oplus  {v'}^*)w.$ 
Then
\beq
&&w_2w_2^*=(1_B\oplus {v'}^*)ww^*(1_B\oplus v')=1_B\oplus {v'}^*qv'=1_B\oplus 1_B\andeqn\\
&&w_2^*w_2=w^*(1_B\oplus v')(1_B\oplus {v'}^*)w=w^*(1_B\oplus q)w=1_B.
\eneq
Moreover, for all $a\in {\cal F},$
\beq
w_2^*\diag(L(a), j_O'(h_0(a)))w_2\approx_{\ep/4} w^*\diag(L(a), j_O(h_0(a)))w\approx_{\ep/4} 
L(a).
\eneq
Therefore, to simplify the notation, \wilog, for the rest of the proof, we may assume 
in \eqref{MC2-10}  that $q=1_B.$

Consider the map $\phi: O_2\to M_k(O_2)$ by
\beq
\phi(x)=\diag(\overbrace{x,x,...,x}^k)\rforal x\in O_2.
\eneq
Since $O_2$ is $KK$-contractive, 
there is  a partial isometry $v_2\in M_k(O_2)$
such that $v_2^*v_2=1_{O_2},$ $v_2v_2^*=1_{M_k(O_2)}$ and 
\beq\label{MC2-15}
\|v_2^*\phi(x)v_2-x\|<\ep/2\rforal x\in h_0({\cal F}).
\eneq
Define  $J_O: O_2\to M_k(B)$  by $J_O=j_O\circ \phi$ and $u=(1_B\oplus J_O(v_2))w.$
Then (recall that we now assume that $ww^*=1_{M_2(B)}$)
\beq
u^*u&=&w^*(1_B\oplus J_O(v_2)^*)(1_B\oplus J_O(v_2))w=w^*(1_B\oplus 1_B)w=1_B\andeqn\\
uu^*&=&(1_B\oplus J_O(v_2))ww^*(1_B\oplus J_O(v_2^*))=1_B\oplus J_O(v_2)^*J_O(v_2)=1_{M_{k+1}(B)}.
\eneq
Moreover, by \eqref{MC2-15} and \eqref{MC2-9}, 
\beq\label{MC2-16}
u^*\diag(L(a), J_O\circ h_0(a))u\approx_{\ep/2} w^*\diag(L(a), j_O(h_0(a))w\approx_{\ep/4} L(a).
\eneq
for all $a\in {\cal F}.$
Define $h=j_O\circ h_0.$  Then, by \eqref{MC2-16}, we have
\beq
\|u^*\diag(L(a),\overbrace{h(a),h(a),...,h(a)}^k)u-L(a)\|<\ep\rforal a\in {\cal F}.
\eneq
\end{proof}

\begin{cor}\label{MC3}
Let $A$ be a separable amenable \CA, $\ep>0,$ 
${\cal F}\subset A$ be a finite subset and $\sigma>0.$
Then there are $\dt>0,$  a finite subset ${\cal G},\,\,{\cal H}\subset A$
  satisfying the following:

Suppose that $L: A\to B,$ where $B$ is any 
unital purely infinite simple \CA\, with properties P3 such that $[1_B]=0$
in $K_0(B),$ 
is a \cpc\, such that
\beq
\|L(ab)-L(a)L(b)\|<\dt\tforal a, b\in {\cal G}\tand \|L(c)\|\ge \sigma\|c\| \tforal c\in {\cal H}.
\eneq
Then, for any integer $k\in \N,$  there exists a partial isometry $u\in M_{k+1}(B)$
and an injective  \hm\, $h: A\to O_2\to B$ such that
\beq
&&u^*u=1_B,\,\,\, uu^*=1_{M_{k+1}(B)}\tand\\
&&\|u^*\diag(L(a), \overbrace{h(a),h(a),...,h(a)}^k)u-L(a)\|<\ep\tforal a\in {\cal F}.
\eneq
\end{cor}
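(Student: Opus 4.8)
The plan is to obtain Corollary \ref{MC3} from Corollary \ref{MC2} by repeating that argument almost word for word, the only change being that the initial appeal to Lemma \ref{Lqoetient} is replaced by an appeal to Corollary \ref{MC1}. The extra fullness hypothesis $\|L(c)\|\ge \sigma\|c\|$ on a finite subset ${\cal H}$ is precisely what lets one invoke Corollary \ref{MC1}, and in exchange Corollary \ref{MC1} hands back an \emph{injective} homomorphism $h\colon A\to O_2\to B$: in the proof of Corollary \ref{MC1} the fullness forces the kernel ideal $I={\rm ker}\,\psi'$ to be $\{0\},$ so the algebra $C$ there equals $A$ and the resulting map out of $O_2$ is injective. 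Thus the output data are obtained as follows: fix $\ep,$ ${\cal F},$ $\sigma,$ apply Corollary \ref{MC1} with $\ep/4$ in place of $\ep$ to produce $\dt,$ ${\cal G},$ ${\cal H},$ and use those.

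Given such an $L\colon A\to B$ with $B$ unital, purely infinite, simple, satisfying property P3, and $[1_B]=0$ in $K_0(B),$ the first step is to feed it into Corollary \ref{MC1} to get an injective $h_0\colon A\to O_2,$ an embedding $j_O\colon O_2\to B,$ a partial isometry $w\in M_2(B)$ with $w^*w=1_B$ and $ww^*=1_B\oplus q,$ a projection $q\in B$ with $q\cdot(j_O\circ h_0)(a)=(j_O\circ h_0)(a)$ for all $a\in A,$ and the estimate $\|w^*\diag(L(a),j_O\circ h_0(a))w-L(a)\|<\ep/4$ on ${\cal F}.$ The second step is the reduction to $q=1_B,$ carried out exactly as in the proof of Corollary \ref{MC2}: using $[1_B]=0$ and pure infiniteness one finds a unital embedding of $O_2$ into $B,$ and since $O_2$ is $KK$-contractive one conjugates $j_O\circ h_0$ onto a map of the form $j_O'\circ h_0$ with $j_O'$ a new embedding of $O_2,$ absorbing $q$ into the range so that the replacement partial isometry $w$ satisfies $ww^*=1_{M_2(B)}.$ The third step is the amplification to $M_{k+1}(B)$: take $\phi\colon O_2\to M_k(O_2),$ $\phi(x)=\diag(\overbrace{x,\dots,x}^k),$ use $KK$-contractibility of $O_2$ to find $v_2\in M_k(O_2)$ with $v_2^*v_2=1_{O_2},$ $v_2v_2^*=1_{M_k(O_2)}$ and $\|v_2^*\phi(x)v_2-x\|<\ep/2$ on $h_0({\cal F}),$ and set $u=(1_B\oplus (j_O'\circ\phi)(v_2))w$ and $h=j_O'\circ h_0\colon A\to O_2\to B.$ A routine computation, identical to the one in Corollary \ref{MC2}, then gives $u^*u=1_B,$ $uu^*=1_{M_{k+1}(B)},$ and the desired estimate $\|u^*\diag(L(a),\overbrace{h(a),\dots,h(a)}^k)u-L(a)\|<\ep$ on ${\cal F}.$

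The only substantive point to check beyond this bookkeeping is that $h$ remains injective throughout, and that is automatic: $h_0$ is injective by Corollary \ref{MC1}, and $O_2$ is simple, so every non-zero homomorphism out of $O_2$ (in particular $j_O',$ and the amplification $\phi$) is injective, whence $h=j_O'\circ h_0$ is injective. I expect no genuine obstacle; the content is entirely concentrated in Corollary \ref{MC1}, and the present statement is just its $k$-fold, injective repackaging, so the remaining work is limited to verifying that the three steps above go through verbatim and preserve injectivity.
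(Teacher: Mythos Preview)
Your proposal is correct and matches the paper's own proof, which consists of the single sentence ``The proof is a modification of that of \ref{MC2} following the same way as that of \ref{MC1}.'' You have faithfully unpacked exactly what that sentence means: rerun the argument of Corollary~\ref{MC2} with the initial call to Lemma~\ref{Lqoetient} replaced by a call to Corollary~\ref{MC1}, so that the resulting $h_0\colon A\to O_2$ is injective and hence so is $h=j_O'\circ h_0$.
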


\begin{proof}
The proof is a modification of that of \ref{MC2} following the same way as that of 
\ref{MC1}.
\end{proof}



\section{Quasidiagonal \CA s}

\begin{df}\label{Dquasi}
Let $A$ be a separable \CA\, and $H$ be a separable infinite dimensional Hilbert space.
 Recall that a representation 
$\phi: A\to B(H)$ is said to be quasidiagonal, if there exists 
an approximate identity $\{p_n\}$ of ${\cal K}$ consisting of projections 
such that
\beq
\lim_{n\to\infty}\|p_n\phi(a)-\phi(a)p_n\|=0\tforal a\in A.
\eneq

A \CA\, is said to be quasidiagonal if it admits a faithful quasidiagonal extension.
\end{df}

Let us state a Voiculescu theorem as follows.

\begin{lem}\label{Lquasi-1}
Let $A$ be a separable quasidiagonal \CA. 
Suppose that $\phi: A\to B(H)$ is a faithful representation  such that
$\phi(A)\cap {\cal K}=\{0\}.$ Then $\phi$ is quasidiagonal. 
\end{lem}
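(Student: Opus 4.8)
The plan is: the hypothesis that $A$ is quasidiagonal provides \emph{some} faithful quasidiagonal representation; I first amplify it to one that is in addition essential (annihilates no nonzero element modulo $\mathcal K$), then use Voiculescu's theorem to identify it with $\phi$ up to approximate unitary equivalence, and finally carry quasidiagonality across that equivalence.

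First I would fix a faithful quasidiagonal representation $\pi_0\colon A\to B(H_0)$ on a separable Hilbert space, witnessed by an increasing sequence of finite-rank projections $q_m\nearrow 1_{H_0}$ with $\|q_m\pi_0(a)-\pi_0(a)q_m\|\to 0$ for all $a$. Put $\Pi=\bigoplus_{k=1}^{\infty}\pi_0$ on $\mathcal H_0=\bigoplus_{k=1}^{\infty}H_0$. Then $\Pi$ is faithful, $\Pi(A)\cap\mathcal K=\{0\}$ (the operator $\bigoplus_k\pi_0(a)$ is compact only when $\pi_0(a)=0$), and $\Pi$ is again quasidiagonal: given a finite $F\subset A$, finitely many vectors of $\mathcal H_0$, and $\ep>0$, truncate those vectors to their first $N$ coordinates to within $\ep$ and use the finite-rank projection $\big(\bigoplus_{k=1}^{N}q_m\big)\oplus 0\oplus\cdots$ for $m$ large. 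Now $\phi$ and $\Pi$ are both faithful representations of the separable \CA\, $A$ on separable Hilbert spaces with trivial intersection with the compacts, so by Voiculescu's non-commutative Weyl--von Neumann theorem (\cite{DV1}) they are approximately unitarily equivalent: there are unitaries $u_n\colon\mathcal H_0\to H$ with $\|u_n\Pi(a)u_n^*-\phi(a)\|\to 0$ for every $a\in A$. (Equivalently, one applies Voiculescu's absorption theorem twice: $\Pi\oplus\phi\approx\phi$ since $\phi$ is faithful and essential, and $\phi\oplus\Pi\approx\Pi$ since $\Pi$ is faithful and essential, whence $\phi\approx\Pi\oplus\phi\approx\phi\oplus\Pi\approx\Pi$.)

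It then remains to transport quasidiagonality along $\{u_n\}$, which is routine. Given a finite $F\subset A$, vectors $\xi_1,\dots,\xi_k\in H$, and $\ep>0$, choose $n$ with $\|u_n\Pi(a)u_n^*-\phi(a)\|<\ep/4$ on $F$, then pick from the quasidiagonal approximate identity of $\Pi$ a projection $Q$ with $\|Qu_n^*\xi_i-u_n^*\xi_i\|<\ep$ for all $i$ and $\|[Q,\Pi(a)]\|<\ep/2$ on $F$, and set $p=u_nQu_n^*$; then $p$ is a finite-rank projection on $H$ with $\|p\xi_i-\xi_i\|=\|Qu_n^*\xi_i-u_n^*\xi_i\|<\ep$ and $\|[p,\phi(a)]\|\le 2\|\phi(a)-u_n\Pi(a)u_n^*\|+\|[Q,\Pi(a)]\|<\ep$. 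Running this over increasing exhaustions of $A$ by finite sets and of $H$ by finite-dimensional subspaces yields an increasing approximate identity of $\mathcal K$ quasi-commuting with $\phi(A)$, so $\phi$ is quasidiagonal. The only substantial input is Voiculescu's theorem in its approximate-unitary-equivalence form; the amplification and the transfer are elementary interleaving arguments, and I expect the mild bookkeeping needed to make the transferred projections increasing to be the only point requiring any care.
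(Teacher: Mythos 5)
Your proposal is correct and follows essentially the same route as the paper: amplify a faithful quasidiagonal representation to an infinite direct sum to kill the compacts, verify the amplification is still quasidiagonal, and then invoke Voiculescu's Weyl--von Neumann theorem to identify it with $\phi$ and transfer quasidiagonality. The only cosmetic difference is that the paper conjugates the amplified representation by a unitary back onto $H$ before citing Voiculescu, while you keep it on $\bigoplus H_0$ and spell out the transfer along the approximating unitaries; the content is the same.
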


\begin{proof}
Let $h: A\to B(H)$ be a faithful quasidiagonal representation.
Consider the countable direct sum $H_\infty=\bigoplus H$, which is also an infinite 
dimensional separable Hilbert space. Therefore there 
is a unitary $u$ from $H_\infty$ onto $H.$ 
Define $h_\infty: A\to B(H_\infty)$ by 
\beq
h_\infty(a)=h(a)\oplus h(a)\oplus \cdots \oplus h(a)\oplus \cdots \,\,\,\,\, \rforal a\in A.
\eneq
Consider $h_1: A\to B(H)$ by 
$h_1(a)=u h_\infty(a)u^*$ for all $a\in A.$
Since $h$ is quasidiagonal representation,  there is a sequence of increasing 
finite rank projections $\{p_n\}$ which is an approximate identity 
for ${\cal K}$ such that
\beq
\lim_{n\to\infty}\|p_nh(a)-h(a)p_n\|=0\rforal a\in A.
\eneq
Now let $\{a_k\}$ be a dense subset of $A.$ 
Suppose that
\beq
\|p_nh(a_k)-h(a_k)p_n\|<1/2^n\rforal k=1,2,...,n.
\eneq
Now let 
$q_1=p_1,$ 
\beq
q_n=u\diag(\overbrace{p_n\oplus p_n\oplus \cdots p_n}^n)u^*,\,\,n\in \N.
\eneq
Then, for $n>1,$
\beq
\|q_nh_1(a_k)-h_1(a_k)q_n\|<1/2^n,\,\,\, k=2, 3,...,n.
\eneq
It follows that
\beq
\lim_{n\to\infty}\|q_n h_1(a)-h_1(a)q_n\|=0\rforal a\in A.
\eneq
Note that $q_n\to 1$ in strong operator topology.  
Hence $h_1$ is a faithful quasidiagonal representation.
Moreover,  $h_1(A)\cap {\cal K}=\{0\}.$
It follows Voiculescu's Weyl - von Neumann theorem （\cite{DV1})  that 
$\phi$ is also a quasidiagonal representation. 
\end{proof}

We would also like state the following corollary:

\begin{lem}\label{Lquasi-2}
Let $A$ be a separable  \CA. 
Suppose that $\phi: A\to B(H)$ is a faithful representation  such that
$\phi(A)\cap {\cal K}=\{0\}.$ Then, for any $k\in \N,$
there is a sequence of partial isometries $w_n\in M_{k+1}(B(H))$ 
such that 
\beq
&&w_n^*w_n=1_{B(H)},\,\,\, w_nw_n^*=1_{M_{k+1}(B(H))},\\
&&w_n^*\diag(\overbrace{\phi(a), \phi(a),...,\phi(a)}^k)w_n-\phi(a)\in {\cal K}\andeqn\\
&&\lim_{n\to\infty}\|w_n^*\diag(\overbrace{\phi(a), \phi(a),...,\phi(a)}^k)w_n-\phi(a)\|=0
\eneq
for all $a\in A.$ Moreover, if $A$ is quasidiagonal, then we may assume 
that $\phi$ is also quasidiagonal.
\end{lem}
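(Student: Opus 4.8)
The plan is to reduce everything to Voiculescu's Weyl--von Neumann theorem (\cite{DV1}) plus the observation that a faithful representation with trivial compact intersection is absorbing. First I would fix $k\in\N$ and set up the ``model'' representation $\Phi:=\diag(\overbrace{\phi,\phi,\dots,\phi}^{k+1})\colon A\to M_{k+1}(B(H))$, which maps $A$ faithfully into $M_{k+1}(B(H))\cong B(H\oplus\cdots\oplus H)\cong B(H)$ (fix a unitary identification $H^{\oplus(k+1)}\cong H$). The key point is that both $\phi$ and $\Phi$ are faithful essential representations of the separable \CA\ $A$ with $\phi(A)\cap{\cal K}=\{0\}=\Phi(A)\cap{\cal K}$: indeed a direct sum of copies of $\phi$ still has trivial intersection with the compacts. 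By Voiculescu's theorem, any two faithful representations of a separable \CA\ that are ``sufficiently non-degenerate'' (here: neither meets ${\cal K}$ nontrivially) are approximately unitarily equivalent \emph{modulo the compacts}; concretely, there is a sequence of unitaries $u_n\in B(H)$ (after the identification, i.e.\ partial isometries $w_n\in M_{k+1}(B(H))$ with $w_n^*w_n=1_{B(H)}$, $w_nw_n^*=1_{M_{k+1}(B(H))}$) such that
\beq\nonumber
w_n^*\,\diag(\overbrace{\phi(a),\dots,\phi(a)}^k)\,w_n-\phi(a)\in{\cal K}\tforal a\in A
\eneq
and $\|w_n^*\diag(\overbrace{\phi(a),\dots,\phi(a)}^k)w_n-\phi(a)\|\to 0$ for every $a$ in a dense set, hence for every $a\in A$.

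The one subtlety in invoking Voiculescu is matching the multiplicities and the compact-perturbation refinement. Here I would argue as follows. Write $\psi:=\diag(\overbrace{\phi,\phi,\dots,\phi}^k)$. Then $\phi\oplus\psi\cong\diag(\overbrace{\phi,\dots,\phi}^{k+1})$, and Voiculescu's theorem in its ``absorption'' form states that if $\rho$ is a faithful representation of a separable \CA\ $A$ with $\rho(A)\cap{\cal K}=\{0\}$, then $\rho\oplus\sigma$ is approximately unitarily equivalent to $\rho$ by unitaries that are compact perturbations of $1$, for any representation $\sigma$ (here $\sigma=\psi$ suffices since $\psi$ is itself a faithful representation with $\psi(A)\cap{\cal K}=\{0\}$; actually one only needs $\sigma$ to be approximately contained in $\rho$, which is automatic). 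This yields partial isometries $w_n$ as above with $w_n^*w_n=1_{B(H)}$ and $w_nw_n^*=1_{M_{k+1}(B(H))}$, with the difference lying in ${\cal K}$ for every $a\in A$ (not merely the dense set, since $\rho(a)-\rho(a)=0$ exactly and the perturbation unitaries differ from $1$ by a compact), and with norm convergence on a dense set and hence on all of $A$ by an $\ep/3$ argument using that $\{w_n\}$ is uniformly bounded.

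For the ``Moreover'' clause: if $A$ is quasidiagonal, then by Lemma \ref{Lquasi-1} the given faithful representation $\phi$ (with $\phi(A)\cap{\cal K}=\{0\}$) is automatically quasidiagonal, so there is nothing further to do---the statement ``we may assume that $\phi$ is also quasidiagonal'' is then immediate, since $\phi$ \emph{is} quasidiagonal. The main obstacle, such as it is, is purely bookkeeping: making sure the perturbation unitaries in Voiculescu's theorem can be taken to give the difference in ${\cal K}$ for \emph{all} $a\in A$ simultaneously (rather than one compact per $a$), which follows because $w_n-1_{\cal K}$-type perturbations conjugate $\diag(\phi,\dots,\phi)$ into something differing from $\phi\oplus(\text{zero on the extra summand})$ by a fixed compact operator depending on $n$ but not on $a$; combined with the exact equality $\phi\oplus\psi=\diag(\overbrace{\phi}^{k+1})$ on the nose, the stated membership in ${\cal K}$ holds for every $a$. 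I do not expect any genuinely hard step here; the lemma is essentially a repackaging of Voiculescu's theorem in the multiplicity-$k$ setting needed later in the paper.
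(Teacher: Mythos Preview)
Your approach is correct and matches what the paper intends: the paper does not actually supply a proof of this lemma, introducing it with the phrase ``We would also like state the following corollary'' and moving directly to the proof of Theorem~\ref{Main}. The implicit argument is precisely Voiculescu's Weyl--von Neumann theorem (\cite{DV1}) in its absorption form, which is what you invoke, and the ``Moreover'' clause is indeed immediate from Lemma~\ref{Lquasi-1}.

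One small remark: you shift between $k$ and $k+1$ copies in your write-up (setting up $\Phi=\phi^{\oplus(k+1)}$ but then concluding with $\diag(\overbrace{\phi(a),\dots,\phi(a)}^k)$). This mirrors an apparent typo in the paper's statement---with $w_n\in M_{k+1}(B(H))$ and $w_nw_n^*=1_{M_{k+1}(B(H))}$, the diagonal should carry $k+1$ copies of $\phi(a)$ for the Voiculescu argument to apply cleanly (compare the analogous formulas in Corollaries~\ref{MC2} and~\ref{MC3}, where the diagonal has $k+1$ entries). Your underlying reasoning via $\phi\oplus\psi\cong\phi^{\oplus(k+1)}\sim\phi$ is the right one; just be explicit that you are reading the statement with $k+1$ copies.
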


{\bf The proof of Theorem \ref{Main}}:

Let $\ep>0$ and a finite subset ${\cal F}\subset A$ be given.
We may assume that ${\cal F}$ is in the unit ball of $A.$
Fix $\sigma>0.$    

Choose  ${\cal G}_1\subset A$ and $\dt_1>0$ as  ${\cal G}$ and $\dt$
in Lemma \ref{Luniqh}  for $\ep/16$ and ${\cal F}.$ 
\Wlog\, (with  possibly smaller $\dt_1$), we may assume that ${\cal F}\subset {\cal G}_1$ 
and ${\cal G}_1$ is in the unit ball of $A.$

Put $\ep_1=\min\{\dt_1/4, \ep/16\}>0.$ 

Choose a finite subset ${\cal G}_2\subset A$ (as ${\cal G}$) and a positive number $\dt_2$ (as $\dt$) 
and an integer $k$  
in Lemma \ref{Luniq} for ${\cal G}_1$ (as ${\cal F}$),  $\ep_1$ (as $\ep$) and $\sigma$ 
(as $\lambda$).
We may assume that ${\cal G}_1\subset {\cal G}_2$ and both are in the unit ball of $A.$

Put $\ep_2=\min\{\ep_1/4, \dt_2/16\}>0.$

Next choose a  finite subset ${\cal G}_3\subset A,$  a positive number  $\dt_3$  and an integer $k_1\in \N$ 
as ${\cal G},$ $\dt$  and  $k\in \N$ in 
Lemma \ref{Luniq}    for ${\cal G}_2$ (in place of ${\cal F}$) and $\ep_2/32$ (in place of $\ep$). 
We may assume that ${\cal G}_2\subset {\cal G}_3,$  and both are in the unit ball  of $A$
and $k_1\ge k+1.$ 

Put $\ep_3=\min\{\ep_2/2, \dt_3/4\}>0.$

Then choose finite subsets ${\cal G}_4,\,\, {\cal H}\subset A$ and  $\dt_4>0$  
as ${\cal G},$ ${\cal H}$ and  $\dt$  in
Corollary
\ref{MC1}    for ${\cal G}_3$ (in place of ${\cal F}$) and $\ep_3/32$ (in place of $\ep$)
(as well as for the given $\sigma$). 
We may assume, \wilog,  that ${\cal G}_3\subset {\cal G}_4$ and both are in the unit ball of $A.$

Let  $\dt=\min\{\dt_4/2, \ep_3/4\}>0$ and ${\cal G}={\cal G}_4.$

Suppose that $L$ is as in the lemma for ${\cal G}$ and $\dt.$
By applying Proposition \ref{PC-E},
we may further assume, \wilog, by choosing even smaller $\dt$ and larger 
${\cal G},$ if necessarily,  that $L$ is a \cpc.

Consider $\pi\circ L: A\to B(H)/{\cal K}.$ 
By Lemma \ref{Pcalkin}, $B(H)/{\cal K}$  is a unital purely infinite simple \CA\, satisfying 
property P3. 
Put $B=B(H)/{\cal K}.$  Note that $K_0(B)=\{0\}.$  Applying Corollary  \ref{MC1}, we obtain an  injective \hm\, 
$j_O: O_2\to B,$  an injective \hm\, $h_0: A\to O_2$ 
and a partial isometry $v_1\in M_{2}(B)$ 
such that 
\beq
&&v_1^*v_1=1_B,\,\,\, v_1v_1^*=1_{M_{2}(B)},\\\label{pMT-4}
&&v_1^*\diag(\pi\circ L(a), h_1(a))v_1\approx_{\ep_3/32}\pi\circ L(a)\rforal a\in {\cal G}_3,
%
\eneq
where $h_1=j_O\circ h_0.$ 

Let $Q_0\in B(H)$ be a projection such that $\pi(Q_0)=v_1^*\diag(1_B,0)v_1.$
Let $Q_1=1_{B(H)}-Q_0.$
Note that $\pi(Q_1)=v_1^*\diag(0, 1_B)v_1.$ 
Since $O_2$ is $KK$-contractive, there is a \hm\, 
$J_O: O_2\to Q_1B(H)Q_1$ such that  $\pi\circ J_O(c)=v_1^*j_O(c)v_1\rforal  c\in O_2.$
Put  $h_2=J_O\circ h_0: A\to Q_1B(H)Q_1.$ Note that $\pi\circ h_2={\rm Ad}v_1\circ j_O\circ h_0={\rm Ad}v_1\circ h_1$ is injective.
Consider ${\bar h}_2: A\to M_{k_1}(Q_1B(H)Q_1)$
by 
\beq
{\bar h}_2(a)=\diag(\overbrace{h_2(a),h_2(a),...,h_2(a)}^{k_1})\rforal a\in A.
\eneq
Let $w_1\in M_{k_1}(B(H))$ such that 
$w_1^*w_1=Q_1$ and $w_1w_1^*=1_{M_{k_1}(Q_1B(H)Q_1)}.$ 
Note $Q_1B(H)Q_1=B(Q_1H).$
By Voiculescu's  Weyl-von Neumann theorem (\cite{DV1}),  there is a unitary $w_2\in Q_1B(H)Q_1$
such that
\beq\label{MM-9}
w_2^* w_1^* {\bar h}_2(a)w_1w_2-h_2(a)\in Q_1{\cal K}Q_1\rforal a\in A.
\eneq
Put $u_1=( Q_0\oplus w_1w_2)\in M_{k_1+1}(B(H)).$
Then 
$u_1^*u_1=Q_0\oplus w_2^*w_1^*w_1w_2=Q_0\oplus Q_1=1_{B(H)}$
and $u_1u_1^*=Q_0\oplus 1_{M_{k_1}(Q_1B(H)Q_1)}.$
Put $$C=(Q_0\oplus 1_{M_{k_1}(Q_1B(H)Q_1)})M_{k_1}(B(H))(Q_0\oplus 1_{M_{k_1}(Q_1B(H)Q_1)}).$$


Recall that  $h_2(A)\cap {\cal K}=\{0\}.$
By the choice of $k_1,$ as well as $\dt_3$ and ${\cal G}_3,$  applying Lemma \ref{Luniq},  there  is a unitary $u_2\in M_{k_1+1}(B)$ 
such that
\beq\label{MM-15}
u_2^*\diag(h_2(a), \overbrace{h_2(a),h_2(a),...,h_2(a)}^{k_1})u_2\approx_{\ep_2/32} 
\diag(L(a), \overbrace{h_2(a),h_2(a),...,h_2(a)}^{k_1})
\eneq
for all $a\in {\cal G}_2.$ 
Define 
\beq
c_a:=L(a)-u_1^*u_2^*\diag(h_2(a), {\bar h}_2(a))u_2u_1\rforal a\in A.
\eneq
Then, by \eqref{pMT-4}, \eqref{MM-9} and \eqref{MM-15},
\beq
\|\pi(c_a)\|<\ep_3/32+\ep_2/32\rforal a\in {\cal G}_2.
\eneq


Applying  Lemma  \ref{Lquasi-1}, we obtain an approximate identity $\{e_n\}$ for ${\cal K}$ 
consisting of finite rank projections such that
\beq
\lim_{n\to\infty}\|e_n h_2(a)-h_2(a)e_n\|=0\rforal a\in A.
\eneq
Put $p_n=\diag(\overbrace{e_n, e_n,...,e_n}^{k_1+1})$ 
and $p_n'=\diag(0, \overbrace{e_n, e_n,...,e_n}^{k_1}),$ $n\in \N.$
Then $\{p_n\}$ is an approximate identity for $M_{k_1+1}({\cal K})$ and 
$\{q_n:=u_1^*u_2^*p_nu_2u_1\}$ is an approximate identity consisting of  finite rank projections
for ${\cal K}.$
Choose a sufficiently large $n,$
we may assume that
\beq\label{MM-20}
\|(1-q_n) c_a\|<\ep_3/16+\ep_2/32,\,\, \|c_a(1-q_n)\|<\ep_3/16+\ep_2/32\andeqn\\ 
\|(1-q_n)c_a(1-q_n)\|<\ep_3/16+\ep_2/32
\rforal a\in {\cal G}_2. 
\eneq
We choose  an even larger $n$ such that
\beq\label{MM-22}
\|e_nh_2(a)-h_2(a)e_n\|<\ep_2/32\rforal a\in {\cal G}_2.
\eneq
On ${\cal G}_2,$ by \eqref{MM-22}
\beq\nonumber
&&\hspace{-0.4in}(1-q_n) u_1^*u_2^*\diag(h_2(a), {\bar h}_2(a))u_2u_1=u_1^*u_2^*\diag((1-e_n)h_2(a), (1-p_n'){\bar h}_2(a))u_2u_1\\\label{MM-21}
&&\approx_{\ep_2/32}u_1^*u_2^*\diag(h_2(a), {\bar h}_2(a))u_2u_1(1-q_n).
\eneq
It follows from \eqref{MM-20} and \eqref{MM-21} that, for all $a\in {\cal G}_2,$ 
\beq\label{MM-23}
&&(1-q_n)L(a)\approx_{\ep_3/16+\ep_2/16} L(a)(1-q_n),\\\label{MM-23+1}
&&q_nL(a)\approx_{\ep_3/16+\ep_2/16} L(a)q_n\andeqn\\
&&\hspace{-0.4in}(1-q_n)L(a)(1-q_n)\\
&&\approx_{\ep_3/16+\ep_2/32} u_1^*u_2^*\diag((1-e_n)h_2(a)(1-e_n),
(1-p_n'){\bar h}_2(a)(1-p_n'))u_2u_1.
\eneq
Hence, for all $a\in {\cal G}_2,$ 
\beq\nonumber
&&\hspace{-0.7in}L(a)\approx_{\ep_3/8+\ep_2/8} (1-q_n)L(a)(1-q_n)+q_nL(a)q_n\\\label{MM-26}
&&\hspace{-0.5in} \approx_{\ep_3/16+\ep_2/32}  u_1^*u_2^*\diag((1-e_n)h_2(a)(1-e_n),
(1-p_n'){\bar h}_2(a)(1-p'_n))u_2u_1 +q_nL(a)q_n.
 \eneq
  Define, for all $a\in A,$ 
 \beq\label{MM-27}
 L_0(a)=\diag(1-e_n)h_2(a)(1-e_n),\overbrace{0,0,...,0}^{k_1})+u_1u_2q_nL(a)q_nu_2^*u_1^*.
 \eneq
 Since $\pi(e_n)=0,$
 there is a  \hm\, 
 \beq
 h_3: A\to ((1-e_n)\oplus p_n)M_{k+1}(B(H))((1-e_n)\oplus p_n).
 \eneq
 such that $\|\pi\circ h_3(a)\|=\|a\|$ for all $a\in A.$
 Note that
 \beq
2(\ep_3/16+\ep_2/16+\ep_2/32)<\dt_2.
\eneq
Hence,  by \eqref{MM-22} and \eqref{MM-23+1}, 
 $L_0$ is ${\cal G}_2$-$\dt_2$-multiplicative. 
Moreover, by \eqref{MM-22},  $(1-e_n)h_2(a)(1-e_n)$ is also ${\cal G}_2$-$\dt_2$-multiplicative. 

 Since 
 $\|\pi\circ h_2(a)\|=\|h_0(a)\|=\|a\|$ for all $a\in A,$  by applying Lemma \ref{Luniq}
 again, 
 there is a unitary $u_3\in M_{k+1}(B(H))$  such that
 \beq\nonumber
 &&\hspace{-0.4in}L_0(a)\oplus \diag(\overbrace{(1-e_n)h_2(a)(1-e_n),...,(1-e_n)h_2(a)(1-e_n)}^{k_1})\\\label{MM-27+}
 &&\approx_{\ep_1} u_3^*(h_3(a)\oplus \diag(\overbrace{(1-e_n)h_2(a)(1-e_n),...,(1-e_n)h_2(a)(1-e_n)}^{k_1}))u_3
 \eneq
 for all $a\in {\cal G}_1.$ In other words,
 \beq\label{MM-28}
 \diag(L_0(a), (1-p_n'){\bar h}_0(a)(1-p_n'))\approx_{\ep_1} u_3^*\diag(h_3(a), (1-p_n'){\bar h}_2(a)(1-p_n'))u_3.
 \eneq
 
 Put $P=((1-e_n)\oplus p_n).$ 
 Note that
 there is an injective \hm\, $H_1: A\to (1-P)M_{k+1}(B(H))(1-P).$
 Since $\|\pi\circ h_3(a)\|=\|a\|$ for all $a\in A,$
 $h_3$ is full \hm\, to $PM_{k+1}(B(H))P\cong B(H).$
  By  Lemma \ref{Luniqh}, 
 there is a unitary $u_4\in M_{k+1}(B(H))$ 
 such that
 \beq\label{MM-30}
 \diag(h_3(a), (1-p'_n){\bar h}_2(a)(1-p_n'))\approx_{\ep/4} u_4^*\diag(h_3(a), H_1(a))u_4\rforal a\in {\cal F}.
 \eneq
 Define $H_2: A\to B(H)$  by $H_2(a)=u_1^*u_2^*u_3^*u_4^*\diag(h_3(a), H_1(a))u_4u_3u_2u_1.$
 Then $H_2$ is a \hm. Moreover $\pi\circ H_2$ is injective.
 
 Put $\eta_1=\ep_3/8+\ep_2/8+\ep_3/16+\ep_2/32.$
We compute that, applying \eqref{MM-26}, \eqref{MM-27}, \eqref{MM-28} and \eqref{MM-30}, 
\beq
L(a) &\approx_{\eta_1}&u_1^*u_2^*\diag(L_0(a), (1-p_n'){\bar h}_2(a)(1-p_n'))u_2u_1\\
&\approx_{\ep_1}& u_1^*u_2^*u_3^*\diag(h_3(a), (1-p'){\bar h}_2(a)(1-p_n')u_3u_2u_1\\
&\approx_{\ep/4}&u_1^*u_2^*u_3^*u_4^*\diag(h_3(a), H_1(a))u_4u_3u_2u_1\\
&=&H_2(a)\hspace{0.4in}\rforal a\in {\cal F}.
\eneq
 
Since 
\beq
\eta_1+\ep_1+\ep/4&\le &(3\ep_3/16+5\ep_2/32)+\ep/16+\ep/4\\
  &\le & (3\ep_2/32+5\ep_1/128)+5\ep/16\\
  &\le & (3\ep_1/128+5\ep_1/128)+5\ep/16<\ep,
\eneq 
 the proof is complete (by letting $h=H_2$).

\begin{lem}\label{Lsimple}
Let $A$ be a separable amenable simple \CA. 
Then, for any finite subset ${\cal H}\subset A,$ 
and any $0<\sigma<1,$ 
there exist $\eta>0$ and a finite subset ${\cal G}\subset A$ satisfying the following:
If $L: A\to B$ (any \CA s $B$) is a contractive positive linear map  with $\|L\|\ge \sigma$ 
and 
\beq
\|L(ab)-L(a)L(b)\|<\eta\tforal a, b\in {\cal G},
\eneq
then 
\beq
\|L(c)\|\ge (1/2)\|c\|\tforal c\in {\cal H}.
\eneq
\end{lem}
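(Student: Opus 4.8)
The plan is to argue by contradiction, passing to a sequence algebra in which the limiting map becomes a genuine $*$-homomorphism and then invoking simplicity of $A$. Suppose the conclusion fails for some finite ${\cal H}\subset A$ and some $0<\sigma<1$. Fixing an increasing sequence of finite subsets ${\cal G}_n\subset A$ with $\bigcup_n{\cal G}_n$ dense in $A$, we may then choose contractive positive linear maps $L_n\colon A\to B_n$ with $\|L_n\|\ge\sigma$ and $\|L_n(ab)-L_n(a)L_n(b)\|<1/n$ for all $a,b\in{\cal G}_n$, together with elements $c_n\in{\cal H}$ satisfying $\|L_n(c_n)\|<(1/2)\|c_n\|$. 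Since ${\cal H}$ is finite, after passing to a subsequence we may assume $c_n=c$ for a fixed $c\in{\cal H}$, and necessarily $c\neq 0$ (the strict inequality forces $\|c\|>0$).

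Next I would assemble the $L_n$ into a $*$-homomorphism. A routine three-term estimate, using $\|L_n\|\le 1$ and density of $\bigcup_n{\cal G}_n$, upgrades the local almost-multiplicativity to $\lim_{n\to\infty}\|L_n(ab)-L_n(a)L_n(b)\|=0$ for all $a,b\in A$. Hence the map $\psi\colon A\to Q:=\prod_n B_n/\bigoplus_n B_n$ given by $\psi(a)=\Pi\big((L_n(a))_n\big)$, with $\Pi$ the quotient map, is a $*$-homomorphism satisfying $\|\psi(a)\|=\limsup_n\|L_n(a)\|$ for every $a$. Since $A$ is simple, $\ker\psi$ is either $\{0\}$ or $A$. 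If $\ker\psi=\{0\}$, then $\psi$ is an injective $*$-homomorphism between \CAs, hence isometric, so $\|c\|=\|\psi(c)\|=\limsup_n\|L_n(c)\|\le(1/2)\|c\|$, whence $c=0$ --- a contradiction. If $\ker\psi=A$, then $\psi=0$, so $\limsup_n\|L_n(1_A)\|=0$; but for a positive linear map on a unital \CA\ one has $\|L_n\|=\|L_n(1_A)\|$, so $\|L_n(1_A)\|\ge\sigma$ for every $n$, again a contradiction. (In every application of this lemma $A$ is unital, e.g.\ $A=C^*_r(G)$, so that is the case of interest.) This yields the contradiction; tracking constants shows the value $1/2$ is immaterial, any constant strictly below $1$ being obtained the same way.

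I expect the only substantive point to be the use of simplicity of $A$: it is exactly what forbids $\psi$ from factoring through a proper quotient $A/I$, on which the image of $c$ could have norm below $(1/2)\|c\|$, so it is where the two hypotheses ($\|L\|\ge\sigma$ and almost-multiplicativity) genuinely interact. The remaining ingredients --- upgrading almost-multiplicativity from a dense set to all of $A$, identifying the norm on $Q$ with $\limsup$, and the identity $\|L_n\|=\|L_n(1_A)\|$ for positive maps on a unital \CA\ --- are routine, so I anticipate no real obstacle beyond bookkeeping.
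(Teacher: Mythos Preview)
Your argument is correct and essentially identical to the paper's: argue by contradiction, pass to the sequence algebra $\prod_n B_n/\bigoplus_n B_n$ to obtain a $*$-homomorphism, and invoke simplicity of $A$ to force it to be isometric, contradicting $\|L_n(c)\|<(1/2)\|c\|$. The paper phrases the nonvanishing step as ``since $\|L_n\|\ge\sigma_0$, $\|\phi\|\ge\sigma_0>0$'' rather than your explicit case split on $\ker\psi$, but the content is the same (and, like your version, is transparently justified in the unital case).
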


\begin{proof}
Otherwise, 
one obtains a finite subset ${\cal H}_0\subset A\setminus \{0\}$ and $0<\sigma_0<1,$ 
a sequence of \CA s\, $\{B_n\}$ and a sequence of contractive positive linear maps $L_n: A\to B_n$
such that
\beq
\lim_{n\to\infty}\|L_n(ab)-L_n(a)L_n(b)\|=0\rforal a, b\in A,\andeqn \|L_n\|\ge \sigma_0
\,\,({\rm for\,\,all}\,\, n)
\eneq
such that
\beq
\lim\inf_n \|L_n(c)\|< (1/2)\|c\|\rforal c\in {\cal H}_0.
\eneq

Consider the contractive linear map  $L: A\to l^\infty(\{B_n\})$ 
defined by $L(a)=\{L_n(a)\}$ for $a\in A.$ Let $\Pi: l^\infty(\{B_n\})\to l^\infty(\{B_n\})/c_0(\{B_n\})$ be the quotient map.  Define $\phi=\Pi\circ L: A\to l^\infty(\{B_n\})/c_0(\{B_n\}).$
Then $\phi$ is a \hm. Since $\|L_n\|\ge \sigma_0,$ $\|\phi\|\ge \sigma_0>0.$
But $A$ is simple, therefore $\|\phi(a)\|=\|a\|$ for all $a\in A.$
Hence, for all large $n,$
\beq
\|L_n(a)\|\ge (3/4)\|a\|\rforal a\in {\cal H}_0.
\eneq
A contradiction.
\end{proof}

{\bf The proof of Corollary \ref{CM0}}:
%
Fix $\ep$ and ${\cal F}$ as in the corollary.
Let $\dt_1$ (as $\dt$),  ${\cal G}_1$ (as ${\cal G}$) and ${\cal H}$ be given by 
Theorem \ref{Main} for $\ep$ and ${\cal F}.$ 
Choose a finite subset  ${\cal G}_2\subset A$ (as ${\cal G}$) 
and $\eta>0$ for ${\cal H}$ given by Lemma \ref{Lsimple}.
Put ${\cal G}={\cal G}_1\cup {\cal G}_2$ and $\dt=\min\{\dt_1, \eta\}>0.$

Now suppose that $L: A\to B(H)$ is a \cpc\, with $\|\pi\circ L\|\ge \sigma$
such that \eqref{CM-1} holds. 
Then, by applying Lemma \ref{Lsimple}, 
one has 
\beq
\|\pi\circ L(a)\|\ge (1/2)\|a\|\rforal a\in {\cal H}. 
\eneq

Thus Theorem \ref{Main} applies.

\begin{rem}\label{Remarks6}
In Corollary \ref{CM0}, one  may choose $\sigma=\ep<1/4.$
However, in that case, when $A$ is a unital and non-elementary simple 
\CA,
the condition that $\|\pi\circ L\|\ge \ep$ is actually necessary.

To see this,  let us assume that $\|\pi\circ L\|<\ep<1/4$
and choose ${\cal H}\supset {\cal F}.$  Recall that we may  also assume 
that $1_A\in {\cal F}$ and $L(1_A)=e$ is a non-zero projection. 
Hence $\|L\|=1.$ 

Then, one may choose a projection $p\in {\cal K}$ such that
$\|\pi\circ L(a)(1-p)\|<\ep$ and $\|(1-p)\pi\circ L(a)\|<\ep,$ whence
\beq
\|pL(a)p-L(a)\|<2\ep\rforal  a\in {\cal F}.
\eneq
If  there were a \hm\, $h: A\to  B(H)$ such that
\beq
\|L(a)-h(a)\|<\ep\rforal a\in {\cal F}.
\eneq
Then, in particular,  
\beq
\|pL(1_A)p-h(1_A)\|<3\ep<1.
\eneq
Since $pL(1_A)p\in {\cal K}$  and $h(1_A)$ is a projection, 
this would imply that $h(1_A)\in {\cal K}.$ 
Hence $h$ is a \hm\, from $A$ into ${\cal K}.$
Since $A$ is a unital simple and non-elementary, $h=0.$ 
This is not possible as $L(1_A)=e$ and $1_A\in {\cal F}.$
\end{rem}

\section{Amenable Groups}

Let $G$ be a group. Denote by $\C[G],$ the group ring which is also the vector space (over $\C$)  of linear combinations 
of $G.$  If $G$ is  discrete, then $\C[G]=C_c(G).$ 
Denote by $C^*(G)$ the group \CA\, of $G$ and $C_r^*(G)$ the reduced \CA\, of the group 
$G.$  For the rest of this paper, we will only consider countable 
discrete amenable groups. In particular, we assume that $C^*(G)=C_r^*(G).$ 

\begin{df}\label{dtildephi}
Let $B$ be a \CA\,   and $\phi: G\to U(B)$ be a map. 
In what follows, denote by $\tilde \phi: \C[G]\to B$ the linear extension 
of $\phi.$ If $\phi$ is a group \hm, then it is   known that 
$\tilde \phi$ is a self-adjoint linear map (with the usual involution) and can be 
uniquely extended to a \CA\, \hm\, $\tilde \phi: C^*(G)\to B.$ 
\end{df}

\begin{lem}\label{Ldiscrete}
Let $G$ be a countable discrete amenable group.
For any $\ep>0,$ any finite subset
${\cal F}\subset \C[G]\subset C_r^*(G),$ 
there is $\dt=\dt(\ep, {\cal F}, G)>0,$ a finite subset ${\cal G}={\cal G}_{\ep, {\cal F}, G}\subset G$ 
satisfying the following:
if $\phi: G\to U(B)$ 
(where $B$ is a unital \CA\, with an ideal $I$)
is a map
such that 
\beq
\|\phi(gf)-\phi(g)\phi(f)\|<\dt\tforal g, f\in {\cal G}_{\ep, {\cal F}, G},
\eneq
then there exists a \cpc\, $L: C^*_r(G)\to B$
such that
\beq\label{Ldiscrete-2}
&&\|\tilde \phi(f)-L(f)\|<\ep\tforal f\in {\cal F}\tand\\
&&\|L(ab)-L(a)L(b)\|<\ep\tforal a,\, b\in {\cal F},
\eneq
where $\tilde \phi$ is the linear extension of $\phi.$ 
Moreover, suppose that   $0<\sigma<1$ is also  given.  
Then,
if 
\beq
\|q\circ \tilde \phi(c)\|\ge \sigma\|c\|\tforal c\in {\cal F},
\eneq
where $q: B\to B/I$ is a quotient map. 
we may also require that
\beq
\|q\circ L(a)\|\ge {\sigma/2}\|a\|\tforal a\in {\cal F}.
\eneq
\end{lem}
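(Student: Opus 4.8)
The plan is to reduce the whole statement to a construction involving the linear extension $\tilde\phi\colon\C[G]\to B$ of $\phi$. First I would observe that $\tilde\phi$ is already almost multiplicative on ${\cal F}$: let $S\subset G$ be the finite union of the supports of all $a\in{\cal F}$ and of all products $ab$ with $a,b\in{\cal F}$, together with $e$, and let $M=\max\{\|a\|_1,\|ab\|_1:a,b\in{\cal F}\}$, where $\|\cdot\|_1$ is the $\ell^1$-norm on $\C[G]$. Then, provided $S\subset{\cal G}$, for $a=\sum_g a_g g$ and $b=\sum_h b_h h$ in ${\cal F}$,
$$\|\tilde\phi(ab)-\tilde\phi(a)\tilde\phi(b)\|\le\sum_{g,h}|a_g|\,|b_h|\,\|\phi(gh)-\phi(g)\phi(h)\|<M^2\dt,$$
and also $\|\phi(e)-1_B\|<\dt$ (since $\phi(e)$ is unitary with $\|\phi(e)-\phi(e)^2\|<\dt$). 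So it suffices to build a \cpc\ $L\colon C^*_r(G)\to B$ with $\|L(c)-\tilde\phi(c)\|$ smaller than a parameter $\eta$ (fixed at the end) for every $c$ in the finite set ${\cal F}\cup\{ab:a,b\in{\cal F}\}$.

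The construction uses amenability only through one F{\o}lner set. Pick a finite $F\subset G$, to be included in ${\cal G}$, with $|gF\triangle F|/|F|<\eta'$ for all $g\in S$. Let $\lambda\colon C^*_r(G)\to B(\ell^2(G))$ be the left regular representation, $P_F$ the projection onto $\ell^2(F)$, and $D\in M_F(B)=B(\ell^2(F))\otimes B$ the diagonal unitary with $D_{s,s}=\phi(s)$ for $s\in F$. Define
$$\Phi_F\colon C^*_r(G)\to M_F(B),\qquad \Phi_F(x)={\rm Ad}\,D\bigl((P_F\lambda(x)P_F)\otimes 1_B\bigr).$$
This is a \cpc\ because it factors as the unital compression $x\mapsto P_F\lambda(x)P_F$ (contractive completely positive, $P_F$ being a projection), followed by the amplification $\,\cdot\otimes 1_B$, followed by the $*$-automorphism ${\rm Ad}\,D$; and a direct computation gives the matrix entries $(\Phi_F(g))_{s,t}=\delta_{s,gt}\phi(s)\phi(t)^*$ for $g\in G$. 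Now put $\xi=|F|^{-1/2}\sum_{s\in F}e_s\otimes 1_B\in\ell^2(F)\otimes B$; then $\xi^*\xi=1_B$, so $Y\mapsto\xi^*Y\xi$ is a unital \cp\ from $M_F(B)$ to $B$, and I take $L:=\xi^*\Phi_F(\cdot)\xi$, which is then manifestly a \cpc\ $C^*_r(G)\to B$.

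Next come the estimates. From the entry formula, $L(g)=\xi^*\Phi_F(g)\xi=|F|^{-1}\sum_{t\in F\cap g^{-1}F}\phi(gt)\phi(t)^*$. Since $\phi(gt)\phi(t)^*-\phi(g)=(\phi(gt)-\phi(g)\phi(t))\phi(t)^*$ and $g\in S\subset{\cal G}$, $t\in F\subset{\cal G}$, each summand lies within $\dt$ of $\phi(g)$; combining this with $1-|F\cap g^{-1}F|/|F|\le|gF\triangle F|/|F|<\eta'$ gives $\|L(g)-\phi(g)\|<\dt+\eta'$ for all $g\in S$, hence $\|L(c)-\tilde\phi(c)\|\le\|c\|_1(\dt+\eta')\le M(\dt+\eta')$ for all $c\in{\cal F}\cup\{ab:a,b\in{\cal F}\}$. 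Feeding this, the almost multiplicativity of $\tilde\phi$ on ${\cal F}$, and $\|L(a)\|\le\|a\|$ (since $L$ is a \cpc) into a four-term triangle inequality yields $\|L(ab)-L(a)L(b)\|<(M+3M^2)(\dt+\eta')$ for $a,b\in{\cal F}$. Choosing $\dt$ and the F{\o}lner quality $\eta'$ so small that $(M+3M^2)(\dt+\eta')<\ep$, and setting ${\cal G}=S\cup F$, proves the displayed conclusions; for the ``Moreover'' clause I would simply take the parameters smaller still, so that $M(\dt+\eta')<\tfrac{\sigma}{2}\min\{\|c\|:c\in{\cal F},\,c\neq0\}$, and then for each $c\in{\cal F}$, $\|q\circ L(c)\|\ge\|q\circ\tilde\phi(c)\|-\|L(c)-\tilde\phi(c)\|\ge\sigma\|c\|-M(\dt+\eta')\ge\tfrac{\sigma}{2}\|c\|$.

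The step I expect to carry the weight is the construction of $L$ — specifically, recognising that the F{\o}lner-twisted compression $\Phi_F$ is completely positive. The tempting ``F{\o}lner partial-isometry matrix'' $g\mapsto\sum_{gt\in F}e_{gt,t}\otimes\phi(gt)\phi(t)^*$ is neither completely positive nor operator-norm almost multiplicative, so the key is to rewrite it as ${\rm Ad}\,D$ applied to an amplification of a compression of the regular representation; once that is in place, amenability enters only to supply the F{\o}lner set $F$ inside the pre-chosen ${\cal G}$, and the remaining work is routine norm bookkeeping.
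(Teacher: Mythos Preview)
Your proof is correct and takes a genuinely different route from the paper's. The paper argues by contradiction: assuming no $\delta,\mathcal{G}$ work, it takes a sequence $\phi_n$ of increasingly multiplicative maps none of which can be approximated, bundles them into $\Phi\colon\C[G]\to l^\infty(B)$, observes that the composition with the quotient to $l^\infty(B)/c_0(B)$ restricts to a group homomorphism on $G$, extends this (by amenability of $G$) to a $*$-homomorphism $\psi\colon C^*_r(G)\to l^\infty(B)/c_0(B)$, and then invokes the Choi--Effros lifting theorem (using nuclearity of $C^*_r(G)$) to obtain a completely positive contractive lift $L=\{L_n\}\colon C^*_r(G)\to l^\infty(B)$; large $n$ yields the contradiction. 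The ``Moreover'' clause is handled exactly as you do, by shrinking parameters so that $\|L(c)-\tilde\phi(c)\|<\tfrac{\sigma}{2}\min\{\|c\|:c\in\mathcal{F}\setminus\{0\}\}$.

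Your construction is explicit and uses amenability only through a single F{\o}lner set, avoiding Choi--Effros and the nuclearity of $C^*_r(G)$ entirely; it also gives effective control of $\delta$ and $\mathcal{G}$ in terms of $\ep,\mathcal{F},\sigma$. The paper's sequence-and-lift argument is quicker to write but non-constructive and leans on heavier machinery. Your averaged map $L(g)=|F|^{-1}\sum_{t\in F\cap g^{-1}F}\phi(gt)\phi(t)^*$ is the standard ``integration'' of a quasi-representation over a F{\o}lner set (as in Kazhdan's uniform result), and the key insight of rewriting it as $\xi^*\,\mathrm{Ad}\,D\bigl((P_F\lambda(\cdot)P_F)\otimes 1_B\bigr)\,\xi$ is what makes complete positivity and the extension to all of $C^*_r(G)$ transparent.
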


(Note that ${\cal S}$ and $\dt$ do depend on $\sigma,$
and $\|a\|$ and $\|c\|$ are the norm used in $C_r^*(G).$)

\begin{proof}

Suppose the lemma is false.
Then there are  $\ep_0>0$ and a finite subset 
${\cal F}_0\subset \C[G]$
such that the conclusion of the lemma is false.
Then there exists a sequence of maps $\phi_n: G\to U(B(H))$ such that
\beq\label{Pcpc-4}
\lim_{n\to\infty}\|\phi_n(fg)-\phi_n(f)\phi_n(g)\|=0\rforal f,g\in G,
\eneq
and, yet, 
\beq\label{Pcpc-5}
\inf \{ \max\{\|\phi_n(f)-\Lambda_n(f)\|: f\in {\cal F}_0\}\}>\ep_0,
\eneq 
where the infimum is taken among all possible \cpc s 
$\Lambda_n: C_r^*(G)\to B$ 
with\\ $\|\Lambda_n(ab)-\Lambda_n(a)\Lambda_n(b)\|<\ep_0$ for all $a, b\in {\cal F}_1.$ 

Define $\Phi: \C[G]\to  l^\infty(B)$ by $\Phi(g)=\{\tilde \phi_n(g)\}$ for all $f\in \C[G].$
Consider $\Psi:=\Pi\circ \Phi: \C[G]\to l^\infty(B)/c_0(B),$
where $\Pi: l^\infty(B) \to l^\infty(B)/c_0(B)$ is the quotient map.
Then, by \eqref{Pcpc-4},  $\Psi|_G$ is a group \hm. 
$\Psi(G).$ Since $G$ is amenable, $C^*(G)=C^*_r(G).$
Therefore there is a \hm\, $\psi: C^*_r(G)\to 
l^\infty(B)/c_0(B)$
such that $\psi|_G=\Psi.$
Since $C^*_r(G)$ is amenable, 
by the Effros-Choi lifting theorem (see \cite{CE}), there 
is a \cpc\, $L: C_r^*(G)\to l^\infty(B)$ 
such that $\Pi\circ L=\psi.$ Write $L(a)=\{L_n(a)\}$ for  all $a\in C_r^*(G).$ 
Then
\beq\label{Pdescrete-5}
\lim_{n\to\infty}\{\max\|L_n(g)-\tilde \phi_n(g)\|: g\in {\cal F}_0\}=0.
\eneq
Moreover, since $\psi$ is a \CA\, \hm, 
\beq
\lim_{n\to\infty}\|L_n(ab)-L_n(a)L_n(b)\|=0\tforal a, b\in C_r^*(G).
\eneq
These contradict with \eqref{Pcpc-5}. 
Thus the first part of the lemma holds.

For the second part of the lemma, 
let $\ep>0$ and a finite subset ${\cal F}$ be given.  
We may  assume that $0\not\in {\cal F}.$
Choose
$\eta=\min\{\ep,({\sigma\over{4}})\min\{\|g\|: g\in {\cal F}\}\}>0.$ 

Applying the first part of lemma for $\eta$ (instead of $\ep$), 
we have 
\beq
\|\tilde \phi(f)-L(f)\|<\eta\le \ep\rforal f\in {\cal F}.
\eneq
Hence, if $\|q\circ \tilde \phi(f)\|\ge \sigma\|f\|$ for all $f\in {\cal F},$
then
\beq
\|q\circ L(f)\|\ge \|\pi\circ \tilde \phi(f)\|-\eta\ge {\sigma\over{2}}\|f\|\rforal f\in {\cal F}.
\eneq
%
%
\end{proof}

\begin{thm}[Theorem \ref{Main2}]\label{CMM}
Let $G$ be a countable discrete amenable group and $H$ be an infinite dimensional Hilbert space.
Let $\ep>0$ and ${\cal F}\subset \C[G]$ be a finite subset and $1\ge \sigma>0.$
Then there exists $\dt>0,$ a finite subset ${\cal G}\subset G$    and 
a finite subset ${\cal H}\subset \C[G]$ such that, if 
$\phi: G\to U(B(H))$ is a map 
satisfying the condition that
\beq
&&\|\phi(fg)-\phi(f)\phi(g)\|<\dt\tforal f,g\in {\cal G}\tand\\\label{CMM-1}
&&\|\pi\circ \tilde \phi(a)\|\ge \sigma\|a\|\tforal a\in {\cal H},
\eneq
then there exists a \hm\, $h: G\to U(B(H))$ such that
\beq
\|\tilde \phi(f)-\tilde h(f)\|<\ep\rforal f\in {\cal F}.
\eneq
\end{thm}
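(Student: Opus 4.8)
The plan is to reduce Theorem~\ref{Main2} (stated here as Theorem~\ref{CMM}) to the already-proved Theorem~\ref{Main} applied to the reduced group \CA\, $A = C^*_r(G)$. First I would record that $A$ is separable, amenable (since $G$ is discrete amenable), lies in ${\cal N}$ (group \CA s of discrete amenable groups satisfy the UCT), and is quasidiagonal: indeed by the work of Tikuisis--White--Winter every separable amenable \CA\, satisfying the UCT which is stably finite --- and in particular every discrete-amenable-group \CA, which has a faithful trace coming from the canonical trace on $C^*_r(G)$ --- is quasidiagonal. Thus $A = C^*_r(G)$ satisfies all the hypotheses of Theorem~\ref{Main}.

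Next I would run the parameters through Lemma~\ref{Ldiscrete}. Given $\ep>0$, the finite set ${\cal F}\subset \C[G]$, and $\sigma\in(0,1]$, apply Theorem~\ref{Main} to $A=C^*_r(G)$ with input tolerance $\ep/2$, input finite set ${\cal F}$, and fullness constant $\sigma/2$; this yields $\dt_0>0$ and finite subsets ${\cal G}_0, {\cal H}_0\subset C^*_r(G)$. Enlarging ${\cal G}_0$ and ${\cal H}_0$ slightly and shrinking $\dt_0$, one may assume ${\cal G}_0\cup{\cal H}_0\cup{\cal F}$ is contained in a fixed finite subset of $\C[G]$. Now feed $\ep' := \min\{\dt_0/2,\ \ep/2\}$ and the finite set ${\cal G}_0\cup{\cal H}_0\cup{\cal F}\subset \C[G]$ together with the constant $\sigma$ into Lemma~\ref{Ldiscrete}; this produces $\dt>0$, a finite subset ${\cal G}\subset G$, and (taking ${\cal H}$ to be the preimage in $\C[G]$ of the finite set used by Lemma~\ref{Ldiscrete}, intersected with $G$ to get the required finite subset ${\cal H}\subset\C[G]$) the conclusion that whenever $\phi:G\to U(B(H))$ satisfies $\|\phi(fg)-\phi(f)\phi(g)\|<\dt$ on ${\cal G}$ and $\|\pi\circ\tilde\phi(a)\|\ge\sigma\|a\|$ on ${\cal H}$, there is a \cpc\, $L:C^*_r(G)\to B(H)$ with $\|\tilde\phi(f)-L(f)\|<\ep'$ on ${\cal G}_0\cup{\cal H}_0\cup{\cal F}$, with $L$ being ${\cal G}_0$-$\dt_0$-multiplicative, and with $\|\pi\circ L(c)\|\ge(\sigma/2)\|c\|$ on ${\cal H}_0$.

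Then Theorem~\ref{Main} applies to this $L$: there is a faithful and full representation $h_0: C^*_r(G)\to B(H)$ with $\|L(a)-h_0(a)\|<\ep/2$ for all $a\in {\cal F}$. Restricting $h_0$ to $G\subset U(C^*_r(G))$ gives a group \hm\, $h := h_0|_G : G\to U(B(H))$ whose linear extension is $\tilde h = h_0$ on $\C[G]$; the ``moreover'' clause of Theorem~\ref{Main2} about $h$ extending a faithful and full representation of $C^*_r(G)$ is exactly the faithfulness and fullness of $h_0$. Finally, for $f\in {\cal F}$,
\[
\|\tilde\phi(f)-\tilde h(f)\| \le \|\tilde\phi(f)-L(f)\| + \|L(f)-h_0(f)\| < \ep' + \ep/2 \le \ep,
\]
which is the desired estimate. (If the statement is phrased for $f$ ranging over a finite subset ${\cal F}\subset G$ rather than ${\cal F}\subset\C[G]$, view ${\cal F}$ as a finite subset of $\C[G]$ and the same chain of inequalities gives $\|\phi(f)-h(f)\|<\ep$.)

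The main obstacle, and the only genuinely nontrivial point, is verifying that $C^*_r(G)$ is quasidiagonal so that Theorem~\ref{Main} is applicable --- this is where the deep input (quasidiagonality of stably finite amenable UCT \CA s, equivalently the Tikuisis--White--Winter theorem, together with the existence of the canonical trace making $C^*_r(G)$ stably finite) enters; everything else is a careful but routine bookkeeping of $\ep$'s and finite subsets, plus the elementary observation that a group \hm\, $G\to U(B(H))$ and a unital \CA\, \hm\, $C^*_r(G)\to B(H)$ are interchangeable data for discrete amenable $G$. I would also double-check the harmless technical point that the fullness hypothesis \eqref{CMM-1}, stated for $a$ in a finite subset of $\C[G]$, survives the passage through Lemma~\ref{Ldiscrete} with the factor-of-two loss in $\sigma$, which it does by the second part of that lemma.
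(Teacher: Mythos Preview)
Your proposal is correct and follows essentially the same route as the paper: verify that $C^*_r(G)$ is quasidiagonal and in ${\cal N}$ (the paper cites \cite{Tu} for UCT and \cite{TWW} for quasidiagonality), pull parameters from Theorem~\ref{Main} for $A=C^*_r(G)$ with $\ep/2$ and $\sigma/2$, pass through Lemma~\ref{Ldiscrete} to replace $\tilde\phi$ by a \cpc\ $L$ at the cost of halving $\sigma$, apply Theorem~\ref{Main} to $L$, and restrict the resulting representation to $G$. The only place your write-up is slightly muddled is the parenthetical about how ${\cal H}$ is produced --- in the paper (and implicitly in your argument) ${\cal H}$ is simply the finite subset ${\cal H}_0\subset\C[G]$ coming from Theorem~\ref{Main}, fed into Lemma~\ref{Ldiscrete} as part of ${\cal F}$ so that the $\sigma/2$ lower bound on $\|\pi\circ L\|$ holds there; there is no ``intersection with $G$'' step.
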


\begin{proof}
Fix $\ep>0$ and a finite subset ${\cal F}$ as well as $0<\sigma\le 1.$ 
Consider the \CA\, $C_r^*(G).$ Since $G$ is    amenable,  
by \cite{Tu}, $C_r^*(G)$ satisfies the UCT and, by \cite{TWW}, is quasidiagonal.

Let $\dt_1$ (as $\dt$) and ${\cal G}_1,\, {\cal H}\subset C_r^*(G)$ (${\cal G}_1$ as ${\cal G}$) be given 
by Theorem \ref{Main} for $\ep/2$ (in place of $\ep$), ${\cal F}$ and $\sigma/2$ (as well as 
$A=C_r^*(G)$). 
Since $\C[G]$ is dense in $C_r^*(G),$ we may assume that ${\cal G}_1, \, {\cal H}\subset \C[G].$

Put $\ep_1=\min\{\ep/2, \dt_1\}>0$ and ${\cal F}_1={\cal G}_1\cup {\cal H}\cup  {\cal F}.$
Let $\dt=\dt(\ep_1, {\cal F}_1, G)$  and  ${\cal G}={\cal G}_{\ep_1, {\cal F}_1, G)}$ be given  by Lemma \ref{Ldiscrete} 
for $\sigma/2$ (in place of $\sigma$).

Now we assume that $\phi:  G\to B(H)$ is as described in the theorem for $\dt,$ ${\cal G}$ and ${\cal H}$  above.

Applying Lemma \ref{Ldiscrete}, we obtain a \cpc\, 
$L: C_r^*(G)\to B(H)$ such that
\beq\label{73m-1}
&&\|\tilde \phi(g)-L(g)\|<\ep_1\rforal g\in {\cal G}_1,\\
&&\|L(ab)-L(a)L(b)\|<\ep_1\rforal a, b\in {\cal G}_1\andeqn\\
&&\|\pi\circ L(a)\|\ge {\sigma\over{2}}\|a\|\rforal a\in {\cal H}.
\eneq
By the choice of  $\ep_1$ and $\dt_1$ as well as ${\cal G}_1,$ applying Theorem \ref{Main},
we obtain a faithful and full \hm\, 
$\Phi: C_r^*(G)\to B(H)$ such that
\beq
\|\Phi(g)-L(g)\|<\ep/2\rforal g\in {\cal F}.
\eneq
Choose $h=\Phi|_G: G\to B(H).$ We obtain  (see also \eqref{73m-1}) that 
\beq
\|\tilde h(g)-\tilde \phi(g)\|<\ep\rforal g\in {\cal F}.
\eneq
\end{proof}

\section{Counterexamples}

\begin{exm}\label{Exshift}
This is an example of maps from 
$L: C(\D)\to B(H)$  such that $\pi\circ L$ induces a \hm\,
from $C(\T)\to B(H)/{\cal K}$ which in turn induces a non-zero $K_1$-map.
Let $f\in C(\D)$ which vainshes on the unit circle. Then 
$\pi\circ L(f)=0.$ In particular, the second condition  in \eqref{Main-1}
fails. 
The example 
is well known. We present here    for our specific purpose.

Let $H$ be an infinite dimensional separable Hilbert space with 
orthonormal basis $\{e_n\}.$ Define 
\beq
s_n(e_k)=\min\{k/n, 1\}e_{k+1},\,\,\, k\in \N,
\eneq
$n=1,2,....$   One computes that
\beq
\lim_{n\to\infty}\|s_n^*s_n-s_ns_n^*\|=0.
\eneq
Let $\Pi:  \prod_{n=1}^\infty B(H)\to  \prod_{n=1}^\infty B(H)/\bigoplus_{n=1}^\infty B(H)$ be the quotient map.  Therefore $\Pi(\{s_n\})$ is a normal element with $\|\Pi(\{s_n\})\|=1.$ 
Let $\D$ be the closed unit disk and 
define  a \hm\,  $\phi: C(\D)\to \prod_{n=1}^\infty B(H)/\bigoplus_{n=1}^\infty B(H)$  by
$\phi(f)=f(\Pi(\{s_n\})$ for all $f\in C(\D).$
By Effros-Choi's lifting Theorem (\cite{CE}), there is a \cpc\, $\Phi: C(\D)\to \prod_{n=1}^\infty B(H)$ such that $\Pi\circ \Phi=\phi.$  Write $\Phi(f)=\{\Phi_n(f)\}$ for $f\in C(\D).$
Then each $\Phi_n: C(\D)\to B(H)$ is a \cpc.  Moreover, 
\beq
\lim_{n\to\infty}\|\Phi_n(fg)-\Phi_n(f)\Phi_n(g)\|=0\rforal f, g\in C(\D)
\eneq
as $\Phi$ is a \hm. However, there are  {\it no}  sequences of \hm s 
$h_n: C(\D)\to B(H)$ such that
\beq
\lim_{n\to\infty}\|h_n(\iota)-\Phi_n(\iota)\|=0,
\eneq
where $\iota: \D\to\D$ is the identity function. In fact, $s_n$ is far away 
from normal elements, as $\pi(s_n)$ is a unitary with nonzero index (see 
\cite[Example 4.6]{FR}).   

Note that $\phi_n: C(\T)\to B(H)/{\cal K}$ defined by
$\phi_n(f)=f(\pi(s_n))$ is an injective \hm\, for each $n\in \N.$
Even though   $\D$ is 
contractive,  $C(\D)$ has an interesting quotient $C(\T)$ which 
contributes to the hidden Fredholm index.  It shows the importance of the fullness condition
(the second condition in \eqref{Main-1})
in Theorem \ref{Main}.

\end{exm}

We would like to point out that Theorem \ref{CMM} does not hold without condition 
\eqref{CMM-1}. 
The following counterexample is presented here for the convenience of the reader.
It was given by Voiculescu (\cite{DV2}).  

\begin{exm}\label{ExVoi}
Let $G=\Z^2$ and $H=l^2.$ 
Let $g_1=(1,0)$ and $g_2=(0,1)$ in $\Z^2$ be the generators.
Then there exists a sequence of maps
$\phi_n : G\to U(B(H))$ such that 
\beq
\lim_{n\to\infty}\|\phi_n(fg)-\phi_n(f)\phi_n(g)\|=0\rforal f, g\in G.
\eneq
But 
\beq
\lim\inf_n\{\max\{\|\phi_n(g_i)-\psi_n(g_i)\|:  1\le i\le 2\}\}>0
\eneq
for any sequence of representations $\psi_n: G\to U(B(H)).$

This is the Voiculescu's example (\cite{DV2}) with minimum modification.
However, since our goal is different and we need an infinite dimensional 
statement (Voiculescu's statement is for finite dimensional), there will be a set of differences.  To be more precise, we will repeat most of Voiculescu's construction as well as the arguments.  Moreover, we will try to use the same, or similar notations.

We first fix an integer $n\in \N$ ($n\ge 10$ as indicated in Voiculescu's example).

Let $\{e_k\}$ be an orthonormal basis for $H.$ 
 Define two unitaries 
 \beq
 U_n(e_k)&=&e_{k+1}, \,\,1\le k\le n-1,\,\, U_n(e_n)=e_1, U_n(e_k)=e_k,\,\,\, k>n,\andeqn\\
 V_n(e_k)&=&\exp(2k\pi \sqrt{-1}  /(n+1)) e_k,\,\,\, 1\le k\le n, V_n(e_k)=e_k,\,\,\, k>n.
 \eneq
 Then $U_n, \, V_n\in U(B(H)),$ $n\in \N.$ 
Let  $Q_n$  be the projection on the span 
of $\{e_1, e_2,...,e_n\}.$
Then $Q_nU_n=U_nQ_n$ and $V_nQ_n=Q_nV_n.$
Define $U_{n,0}=U_nQ_n$ and $V_{n,0}=V_nQ_n.$
Note that $U_{n,0}$ and $V_{n,0}$ are unitaries on $B(Q_nH).$
It should also be noted that $1\not\in {\rm sp}(V_{n,0}).$ 

We first  compute that
\beq
\|U_nV_n-V_nU_n\|\le |1-\exp(-2\pi \sqrt{-1}/(n+1))|\to 0\,\,\, {\rm as}\,\,n\to\infty.    
\eneq

Assuming there are  pairs of commuting unitaries $U_n'$ and $V_n'$ 
in $B(H)$ such that
\beq
\lim_{n\to\infty}\|U_n-U_n'\|=0\andeqn \lim_{n\to\infty}\|V_n-V_n'\|=0,
\eneq
we will reach a contradiction.

Following Voiculescu's notation, 
consider the unit circle $\T=\{z\in \C: |z|=1\}$ 
 and the arcs
\beq
&&\Gamma=\{z\in \T: \pi/5\le \arg(z)< 4\pi/5\},\,\, \Gamma'=\{z\in \T: 2\pi/5\le \arg(z)< 3\pi/5\},\\
&&\Gamma''=\{z\in \T: 0< \arg(z)<\pi\},\,\, \Phi':=\{z\in \T: 0<\arg(z)< 2\pi/5\},\\
&&\Phi''=\{3\pi/5\le \arg(z)<\pi\}.
\eneq

Let $E_n$ be the spectral projection of $V_n'$ corresponding to $\Gamma,$
let 
$E_n'$ be the spectral projection of $V_n$ corresponding to $\Gamma',$ 
$E_n''$ be the spectral projection of $V_n$ corresponding to 
$\Gamma'',$ $F_n^{(1)}$ to $\Phi'$ and $F_n^{(2)}$ to $\Phi'',$ 
respectively. 
Note that 
\beq\label{dv-1}
E_n''=E_n'+F_n^{(1)}+F_n^{(2)}.
\eneq
Also, since $[U_n',V_n']=0,$  
we have $[U_n', E_n]=0$ and 
hence
\beq\label{V1}
\lim_{n\to\infty}\|[U_n, E_n]\|=0.
\eneq
As in Voicuelscu's argument, we will use the following fact:
If $N_n, N_n'$ are normal operators, $\lim_{n\to\infty}\|N_n-N_n'\|=0,$ 
and $\|N_n\|$ is (uniformly) bounded, and $P_n,$ $P_n'$ are spectral 
projections of $N_n$ and $N_n',$ respectively, corresponding to Borel sets $\Omega,$
$\Omega'$ such that $\Omega\cap \Omega'=\emptyset,$ then 
$\lim_{n\to\infty}\|P_nP_n'\|=0.$  
Moreover, this fact also implies that, if $\Omega\supset \Omega',$
then 
\beq
\lim_{n\to\infty}\|P_nP_n'-P_n'\|=0=\lim_{n\to\infty}\|P_n'P_n-P_n'\|=0.
\eneq 
In particular, this gives
\beq\label{V2}
\lim_{n\to\infty}\|(1-E_n'')E_n\|=\lim_{n\to\infty}\|(I-E_n)E_n'\|=0.
\eneq
Or, equivalently,
\beq\label{V2++}
\lim_{n\to\infty}\|E_n-E_n''E_n\|=0\andeqn \lim_{n\to\infty}\|E_n'-E_nE_n'\|=0.
\eneq
Let $F_n^{(2')}$ and $F_n^{(2'')}$ be the spectral projections of $V_n$ corresponding 
to $\{z\in \T:3\pi/5\le \arg(z)<4\pi/5\}$ and $\{z\in \T: 4\pi/5\le\arg(z)<\pi\},$
respectively.  Note that $F_n^{(2)}=F_n^{(2')}+F_n^{(2'')}.$ 
Then 
\beq
\lim_{n\to\infty}\|E_n F_n^{(2'')}\|=0,\,\,\lim_{n\to\infty}\|E_nF_n^{(2')}-F_n^{(2')}\|=0
\andeqn \lim_{n\to\infty}\|F_n^{(2')}E_n-F_n^{(2')}\|=0.
\eneq
It follows that
\beq
\lim_{n\to\infty}\|E_nF_n^{(2)}-F_n^{(2)}E_n\|=0.
\eneq
Similarly, 
\beq
\lim_{n\to\infty}\|E_nF_n^{(1)}-F_n^{(1)}E_n\|=0.
\eneq
Moreover,
\beq\label{V3}
\lim_{n\to\infty}\|F_n^{(1)} E_n F_n^{(2)}\|=\lim_{n\to\infty}\|F_n^{(1)}E_n F_n^{(2')}\|
=\lim_{n\to\infty}\|E_n F_n^{(1)}F_n^{(2')}\|=0.
\eneq
Then
\beq\label{V3+1}
&&E_nE_n''=(E_nE_n'+E_nF_n^{(1)}+E_nF_n^{(2)}),\\\label{V3+2}
 &&\lim_{n\to\infty}\|E_nF_n^{(1)}-F_n^{(1)}E_nF_n^{(1)}\|=0,\\\label{V3+3}
 &&\lim_{n\to\infty}\|E_nF_n^{(2)}-F_n^{(2)}E_nF_n^{(2)}\|=0.
\eneq
Let $X_n=E_n'+F_n^{(1)}E_nF_n^{(1)}+F_n^{(2)} E_nF_n^{(2)}.$
Then, by \eqref{dv-1}, \eqref{V2++}, \eqref{V3+1}, \eqref{V3+2} and \eqref{V3+3},
\beq\label{V3+}
\lim_{n\to\infty}\|X_n-E_n\|=0.
\eneq
Hence $\lim_{n\to\infty}\|X_n^2-X_n\|=0.$  Define 
\beq\label{V4-}
\tilde E_n=f(X_n)\,\,\,{\rm for\,\,\, large\,\,} n,
\eneq
where $f\in C_0((0, \infty))_+$ such that 
$0\le f\le 1,$ $f(t)=0$ for $t\in [0, 1/4]$ and $f(t)=1$
for $t\in [1/2, 1].$ Note that (for all large $n$) $\tilde E_n$ is the spectral projection 
of $X_n$ corresponding to $[1/2, 1].$ 
Note that
\beq
E_n'\le X_n\le E_n'+F_n^{(1)}+F_n^{(2)}=E_n''.
\eneq
Hence
\beq\label{V4}
E_n'\le \tilde E_n\le E_n''.
\eneq 
Note that $\tilde E_n=E_n'+\tilde F_n^{(1)}+\tilde F_n^{(2)},$
where $\tilde F_n^{(1)}\le F_n^{(1)}$ and $\tilde F_n^{(2)}\le F_n^{(2)}$ are projections.

Consider now the projection $E_n^+=E_n'+F_n^{(1)}+\tilde F_n^{(2)}$ and assume, 
as in \cite{DV2}, $n\ge 10.$ We have (see \eqref{dv-1} ) that
\beq\label{V5}
&&\tilde E_n\le E_n^+\le E_n''.
\eneq
Recall that  $Q_j$ is the projection on the span of $\{e_1,e_2,...,e_j\},$ $j\in \N.$ 
Then  (from the definition of  $V_n$) $F_n^{(1)}\le Q_{J_n}$ for some $
(n+1)/5\le J_n\le ((n+1)/5)+1$ and 
$U_nF_n^{(1)}=Q_{J+1}U_nF_n^{(1)}.$ When $n\ge 20,$ $Q_{J+1}\le F_n^{(1)}+E_n'.$ 
Hence we also have
\beq\label{V6}
0=
(I-E_n^+)U_nF_n^{(1)}=(I-E_n^+)U_n\tilde F_n^{(1)}.
\eneq
It follows that (see \eqref{V5} for the third equality)
\beq\label{V7}
(I-E_n^+)U_nE_n^+=(I-E_n^+)U_n(E_n'+\tilde F_n^{(2)})=(1-E_n^+)U_n\tilde E_n\\
=(1-E_n^+)(I-\tilde E_n)U_n\tilde E_n.
\eneq
Also, by \eqref{V1}, \eqref{V3+} and \eqref{V4-}, 
\beq\label{V8}
\lim_{n\to\infty}\|(I-\tilde E_n)U_n\tilde E_n\|=0.
\eneq
which (together with \eqref{V7}) implies that
\beq\label{V9}
\lim_{n\to\infty}\|(I-E_n^+)U_nE_n^+\|=0.
\eneq

Now consider the shift operator $S$ which is given 
by $S(e_k)=e_{k+1}$ for all $k\in \N.$
Then 
\beq\label{V10}
SQ_{n-1}=U_nQ_{n-1}.
\eneq
Recall also
$E_n^+\le E_n'' \le Q_{\lfloor(n+1)/2\rfloor +1},$  where $\lfloor (n+1)/2\rfloor$ is the integer part of 
$(n+1)/2.$
It follows 
that
\beq
SE_n^+=U_nE_n^+.
\eneq
We then  compute that
\beq
(I-E_n^+)U_nE_n^+=(I-E_n^+)SE_n^+. 
\eneq
It follows from \eqref{V9} that
\beq\label{V12}
\lim_{n\to\infty}\|(I-E_n^+)SE_n^+\|=0.
\eneq
Recall that $E_n^+\le Q_{\lfloor (n+1)/2\rfloor+1}.$ So $E_n^+$ has finite rank.
For each $k\in \N,$ if $n\ge \max\{5(k+1), 10\},$ 
then 
\beq
F_{n}^{(1)}\ge
%
Q_k.
\eneq
In other words, $\{F_n^{(1)}\}$ forms an approximate identity for ${\cal K}.$
Therefore \eqref{V12} would imply that $S$ is quasidiagonal which is not.
We reach a contraction. 
\end{exm}

\begin{rem}
Note that, in the example above, 
$\pi\circ \phi_n(g_1)=\pi\circ \phi_n(g_2)=1_{B(H)}.$ 
However, 
$\tilde \phi_n$ is approximately injective (on $\C[G]$).  
This is an extremal case that condition \eqref{MM-2}  in 
Theorem \ref{Main2} fails.  If we choose $A=C^*(\Z^2),$
this example is also a counterexample of {\bf Q1} and 
an extremal case that the second condition in \eqref{Main-1} fails
(and quite different from Example \ref{Exshift}).
\end{rem}

 \providecommand{\href}[2]{#2}


\noindent 
hlin@uoregon.edu,\\
%
hlin@simis.cn


\begin{thebibliography}{10}















\bibitem{BP} L. G.  Brown and G. K. Pedersen, {\em \CA s of real rank zero,} J. Funct. Anal. {\bf 99} (1991), 131--149.





\bibitem{CE} M.-D. Choi and E. Effros, {\em  The completely positive lifting problem for 
\CA s},  Annals of Math., {\bf 104} (1976), 585-609.





\bibitem{CP} {{J. Cuntz and G. K. Pedersen,
{\em Equivalence and traces on $C^*$-algebras},  J. Funct. Anal. {\bf 33} (1979), 135--164.}}

\bibitem{EK} S. Eilers and T.  Katsura,  {\em Semiprojectivity and properly infinite projections in graph \CA s},
Adv. Math.  {\bf 317} (2017), 108--156.








\bibitem
{EGLN} G. A.   Elliott, G. Gong, H. Lin and Z. Niu,
\emph{On the classification of simple amenable {C*}-algebras with
  finite decomposition rank, {II}},  J. Noncommutative Geometry,
  (2024), 
DOI
    10.4171/JNCG/560
 arXiv:1507.03437.


 \bibitem
{eglnp}
G. A.   Elliott, G. Gong, H. Lin and Z. Niu,
{\em Simple stably projectionless \CA s
of generalized tracial rank one},  J. Noncommutative Geometry,
{{\textbf{14} (2020), 251-347.}}
\, arXiv:1711.01240.

 \bibitem{EGLN00}
G.A. Elliott,  G. Gong, H. Lin and Z. Niu,
\emph{The classification of simple separable KK-contractible C*-algebras
with finite nuclear dimension.}
J. Geometry and Physics, \textbf{158}, (2020),  103861, p1-51.


\bibitem{FR}  P. Friis, M. R\o rdam,  {\em Almost commuting self-adjoint matrices -- a short proof of
Huaxin Lin's theorem},  J. Reine. Angew. Math. {\bf 479} (1996), 121--131.






\bibitem{GJS} G.~Gong, X.~Jiang, and H.~Su, {\em Obstructions to {$\mathcal{Z}$}-stability for unital simple C*-algebras},
  Canad. Math. Bull.  {\bf 43} (2000),  418--426.

\bibitem
{GL-KT} G. Gong and H.  Lin, {\em Almost multiplicative morphisms and K-theory},
 Internat. J. Math.  {\bf 11} (2000),  983--1000.

\bibitem{GL}  G. Gong and H. Lin, {\em On classification of nonunital amenable simple \CA s IV: Stably projectionless \CA s},
Ann. K-Theory {\bf 9} (2024), 143--339.



\bibitem{GLNI} G.~Gong, H.~Lin, and Z.~Niu,
{\itshape A classification of finite simple amenable ${\cal Z}$-stable
$C^*$-algebras, I: $C^*$-algebras with generalized tracial rank one,}
C. R. Math. Acad. Sci. Soc. R. Canada, {\bf 42} (2020), 63--450.


\bibitem{GLNII} G.~Gong, H.~Lin and Z.~Niu, \emph{A classification of finite simple amenable ${\cal Z}$-stable C*-algebras, II: C*-algebras with rational generalized tracial rank one},   C. R. Math. Acad. Sci. Soc. R. Canada {\bf 42} (2020), 451--539

%

\bibitem{Halmos1}  P.R. Halmos, {\em Ten problems in Hilbert space}, Bull. Amer. Math. Soc. 
{\bf 76} (1970), 887--931.






\bibitem{Kz} D. Kazhdan, {\em On $\ep$-representations}, Israel J. Math. {\bf 43} (1982), 315--323. 

\bibitem{KR}  E. Kirchberg and M.  R\o rdam, {\em Central sequence \CA s and tensorial absorption of the Jiang-Su algebra},
 J. Reine Angew. Math., {\bf 695} (2014), 175--214.

\bibitem{KP} E. Kirchberg and N. C. Phillips, {\em Embedding of exact \CA s in the Cuntz algebra ${\cal O}_2$}, J. Reine Angew. Math. {\bf 552} (2000), 17--53.



\bibitem{Lincs2} H.  Lin, {\em Simple  Corona \CA s}, Proc. Amer. Math.Soc., {\bf 132} 
(2004), 3215--3224.




\bibitem{Linduke} H.  Lin, {\em Classification of simple \CA s of tracial topological rank zero}, 
 Duke Math. J. {\bf 125} (2004), 91--119.

\bibitem{LinTAMS04} H. Lin, {\em A separable Brown-Douglas-Fillmore Theorem and 
Weak stability}, Trans. Amer. Math. Soc. {356} (2004) 2889--2925.

\bibitem{LinAUCT} H. Lin, {\em An approximate universal coefficient theorem}, Trans. Amer. Math. Soc. {\bf 357} (2005), 3375--3405.

\bibitem{Linqsex} H. Lin, {\em Extensions by simple \CA s: quasidiagonal extensions}, Can. J. Math. {\bf 57},
(2005): 351--399.

\bibitem{Linfull} H.  Lin, {\em Full extensions and approximate unitary equivalence},
 Pacific J. Math. {\bf 229} (2007)
389--428.
 
\bibitem{Linsemiproj} H. Lin,  {\em Weak semiprojectivity in purely infinite simple \CA s}, 
Canad. J. Math. {\bf 59} (2007),  343--371.
 \bibitem{Linclr1} H. Lin, {\em Simple nuclear \CA s of tracial topological rank one}, J. Funct. Anal.
 {\bf  251}  (2007) 601--679.

\bibitem{Lininv} H. Lin, {\em Asymptotic unitary equivalence and classification of simple amenable \CA s},
 Invent. Math. {\bf 183} (2011),  385--450.
 
 \bibitem{LinAH1} {H. Lin}, {\em Homomorphisms from AH-algebras}, J. Topol. Anal. {\bf 9} (2017) 67--125, arXiv:1102.4631v1(2011).
 
  \bibitem{Linbook2} H. Lin, {\em From the Basic Homotopu Lemma to the Classification of \CA s}, CBMS 
 {\bf 124} (2017). pp-240.

 





\bibitem{Linmf1} H. Lin, {\em  Almost commuting self-adjoint operators and 
measurements},  arXiv:2401.04018.

\bibitem{Lincmp} H. Lin, {\em  Existence of approximately macroscopically unique states},
Comm. Math. Physics,  (2025), doi.org/10.1007/s00220-024-05218-w.



\bibitem{Mf} D. 
Mumford,  {\em  Numbers and the World-Essays on Math and Beyond},  American Mathematical Society, Providence, RI (2023). xvi+241 pp. ISBN: [9781470470517]; [9781470474492]


\bibitem{Pdbook}  G. K. Pedersen, {\em \CA s and Their Automorphism Groups}, London Mathematical Society Monographs, 14. Academic Press, Inc. London/New York/San Francisco, 1979.


\bibitem{Phpi} N. C. Phillips, {\em 
Approximation by unitaries with finite spectrum in purely infinite \CA s},
J. Funct. Anal.   {\bf 120} (1994), 98--106.







 \bibitem{Rr2} M.   R\o rdam, {\em On the structure of simple \CA s tensored with a UHF-algebra. II},  J. Funct. Anal. {\bf 107} (1992),  255--269.


\bibitem{MRbook} M. Rordam, {\em Classification of nuclear, simple \CA s},
Encyclopaedia Math. Sci., 1{\bf 26}
Oper. Alg. Non-commut. Geom., 7
Springer-Verlag, Berlin, {\bf 2002}, 1--145.
ISBN: 3-540-42305-X




\bibitem{RS} J. Rosenberg and C. Schochet, {\em 
The K\"unneth theorem and the universal coefficient theorem for Kasparov's generalized 
$K$-functor},
Duke Math. J. {\bf 55} (1987),  431--474.

\bibitem{S-2} Y. Sato, {\em Trace spaces of simple nuclear \CA s with finite-dimensional extreme boundary},  preprint 2012,
http://arxiv.org/abs/1209.3000.

\bibitem{Sp} J.  Spielberg, {\em Weak semiprojectivity for purely infinite \CA s},
Canad. Math. Bull. {\bf 50}  (2007), 460--468.



\bibitem{ST} A. P. W. S\o rensen and H.  Thiel, {\em A characterization of semiprojectivity for commutative \CA s},
Proc. Lond. Math. Soc.  {\bf 105} (2012), 1021--1046.




\bibitem{TWW}
A.~Tikuisis, S.~White, and W.~Winter, 
\emph{Quasidiagonality of nuclear {C*}-algebras}, Ann. of Math. {\bf 185} (2017), 229--284,






 


\bibitem{Tu} J.-L. Tu, {\em La conjecture de Baum-Connes pour les feuilletages moyennables},
K-Theory {\bf 17} (1999),  215-- 264.


\bibitem{DV1}  D. Voiculescu, {\em A non-commutative Weyl-von Neumann theorem
Voiculescu}, 
Rev. Roumaine Math. Pures Appl. {\bf 21} (1976), no. 1, 97--113.

\bibitem{DV2} D. Voiculescu {\em
 Asymptotically commuting finite rank unitary operators without commuting approximants},
Acta Sci. Math. (Szeged) {\bf 45} (1983), no. 1-4, 429--431.

\bibitem{Wr} R. Willett, {\em Conditional representation stability,classification of 
$*$-\hm s, and relative eta invariant}, arXiv, 2408.13350 v2. 




\bibitem{Zh} S. Zhang, {\em A property of purely infinite simple \CA s}, Proc. Amer. Math. Sot. {\bf 109} (1990), 717--720.




 \end{thebibliography}
\end{document}